\documentclass[smallextended]{svjour3}

\usepackage{amsmath}
\usepackage{amssymb}
\usepackage{url}

\usepackage{subfig}
\usepackage{siunitx}
\usepackage[ruled,vlined]{algorithm2e}
%\usepackage[notref,notcite]{showkeys}
%GATHER{AndrewChernihPhDthesis.bib}

\newtheorem{thm}{Theorem}[section]

\newtheorem{lem}[thm]{Lemma}

\newtheorem{cor}[thm]{Corollary}

%\ifx\pdfoutput\undefined
%\usepackage{graphicx}
%\else
%\usepackage[pdftex]{graphicx}
%\usepackage{epstopdf}
%\fi

\begin{document}

\title{Multiscale methods with compactly supported radial basis functions for the Stokes problem on bounded domains \thanks{This work was supported by the Australian Research Council.}}

\titlerunning{Multiscale methods for the Stokes problem on bounded domains}

\author{A. Chernih \and Q. T. Le Gia}

\institute{A. Chernih
\at School of Mathematics and Statistics, University of New South Wales, Sydney NSW 2052,
Australia\\Tel.: +61-410-697411, Fax: +612 93857123 \\\email{andrew@andrewch.com}
\and
Q. T. Le Gia
\at School of Mathematics and Statistics, University of New South Wales\\\email{qlegia@maths.unsw.edu.au}
}

\maketitle
\begin{center}
\today
\end{center}

\begin{abstract}
In this paper, we investigate the application of radial basis functions (RBFs) for the approximation with collocation of the Stokes problem. The approximate solution is constructed in a multi-level fashion, each level using compactly supported radial basis functions with decreasing scaling factors. We use symmetric collocation and give sufficient conditions for convergence and consider stability analysis. Numerical experiments support the theoretical results.
\end{abstract}

\keywords{Radial basis functions \and compact support \and smoothness \and Wendland functions}
\subclass{33C90 \and 41A05 \and 41A15 \and 41A30 \and 41A63 \and 65D07 \and 65D10}

\section{Introduction}

In this paper we investigate multiscale symmetric collocation approximation with Wendland compactly supported radial basis functions (RBFs) to solve the Stokes problem
\begin{eqnarray}
-\nu \Delta \mathbf{u} + \nabla p &=& \mathbf{f} \quad \mbox{ in } \,\, \Omega, \label{eqnStokes1} \\
\nabla \cdot \mathbf{u} &=& 0 \quad \mbox{ in } \,\, \Omega, \\
\mathbf{u} &=& \mathbf{g} \quad \mbox{ on } \,\, \partial \Omega, \label{eqnStokes3}
\end{eqnarray}
where the region $\Omega \subseteq \mathbb{R}^d$, the viscosity $\nu$, $\mathbf{f}: \Omega \rightarrow \mathbb{R}^d$ and $\mathbf{g}: \Omega \rightarrow \mathbb{R}^d$ are given and we seek an approximate solution to the velocity $\mathbf{u}: \Omega \rightarrow \mathbb{R}^d$ and the pressure $p: \Omega \rightarrow \mathbb{R}$.

Radial basis functions (RBFs) have been increasingly important in the area of approximation theory. For solving partial differential equations (PDEs), RBFs with meshless collocation for PDEs have been investigated in \cite{GieW06,Fas07} and for the Stokes problem in \cite{Wen09}. Matrix-valued, positive definite kernels have been studied in \cite{NarW94,Fus08,Fus08b,Low05,Low05b,Wen09}. Two excellent recent books covering practical and theoretical issues related to RBFs are \cite{Fas07} and \cite{Wen05}. We recall that a function $\Psi : \mathbb{R}^d \rightarrow \mathbb{R}$ is said to be \textit{radial} if there exists a function $\psi: [0,\infty) \rightarrow \mathbb{R}$ such that $\Psi(\mathbf{x}) = \psi(\|\mathbf{x}\|_2)$ for all $\mathbf{x} \in \mathbb{R}^d$, where $\|\cdot\|_2$ denoting the usual Euclidean norm in $\mathbb{R}^d$.  Then with a scaling factor $\delta> 0$, we can define a scaled RBF as
\begin{equation*}
\Psi_{\delta}(\mathbf{x}) = \delta^{-d} \psi\left(\frac{\|\mathbf{x}\|_2}{\delta}\right).
\end{equation*}

A practical issue that arises is that of which scale to use for the radial basis functions. A small scale will lead to a sparse and consequently well-conditioned linear system, but at the price of the approximation power. Conversely, a large scale will have better approximation power but at the price of a poorly-conditioned linear system.

Many examples may naturally exhibit multiple scales, for example, modelling fluid dynamics, where many different scales may be required. Of course, this comes at the price of having to select which scales to use in which regions but this is not the topic of this paper.

The multiscale algorithm investigated in this paper is constructed over multiple levels, in which the residual of the current stage is the target function for the next stage, and in each stage, RBFs with smaller support and with more closely spaced centres will be used as basis functions.

Such a multiscale algorithm for interpolation was first proposed in \cite{FloI96} and \cite{Sch96} but without any theoretical grounding. Theoretical convergence was proven in the case of the data points being located on a sphere \cite{LeSW10} and then extended to interpolation and approximation on bounded domains \cite{Wen10}.

In \cite{LeSW12} we can find an analysis of multiscale algorithms for RBF collocation of elliptic PDEs on the sphere.

The extension to considering the Stokes problem on a bounded domain changes the analysis significantly as matrix-valued kernels need to be considered with divergence-free approximation spaces. The scaled kernel, convergence and stability analysis are significant new contributions.

Our approach differs significantly from the literature, where for example in \cite{MasK06} the authors consider the incompressible Navier-Stokes equations in its weak formulation, and then decompose the velocity into coarse and fine scales.

In the next section we provide necessary background material regarding point sets and function spaces. Section \ref{SecnSymmStokes} describes our (single scale) symmetric collocation approximation and then Section \ref{SecnMultiSymmStokes} extends this to a multiscale algorithm and provides proofs of convergence. Section \ref{StokesCondNumber} provides an analysis of the stability of the approximations. Section \ref{SectionNumExperiments} provides numerical experiments to test the theoretical results.

\section{Preliminaries} \label{SectionPreliminaries}

In this paper, we will use (scaled) compactly supported radial basis functions to construct multiscale approximate solutions to the Stokes problem, that is, we form the solution over multiple levels. We will work with a given domain $\Omega \subseteq \mathbb{R}^d$. A kernel $\Phi: \Omega \times \Omega \rightarrow \mathbb{R}$ is also given.

At each level, we will have a finite point set $X \subseteq \Omega$. We will define the \textit{mesh norm} as
$$h_{X, \Omega} := \sup_{\mathbf{x} \in \Omega} \min_{\mathbf{x}_j \in X} \|\mathbf{x}-\mathbf{x}_j\|_2,$$
and the \textit{separation distance} as
$$q_{X} := \frac{1}{2} \min_{j \neq k} \|\mathbf{x}_j - \mathbf{x}_k\|_2,$$
which are measures of the uniformity of the points in $X$. Then for example, at each level $i$, we denote the mesh norm by $h_i$. The selection of point sets with mesh norms decreasing in a specific way will form one of the requirements for convergence of our algorithms and the separation distance will be used for the stability analysis.

We define the Sobolev spaces in the usual way. For a given domain, $\Omega \subseteq \mathbb{R}^d$, $k \in \mathbb{N}_0$, and $1 \leq p < \infty$, the Sobolev spaces $W_p^k(\Omega)$ consist of all $u$ with weak derivatives $D^{\alpha}u \in L_p(\Omega), \vert \alpha \vert \leq k.$ The semi-norms and norms are defined as
\begin{equation*}
\vert u \vert_{W_p^k(\Omega)} = \left( \sum_{\vert \alpha \vert = k} \| D^{\alpha}u\|_{L_p(\Omega)}^{p} \right)^{\frac{1}{p}} \quad \mbox{and} \quad \| u \|_{W_p^k(\Omega)} = \left( \sum_{\vert \alpha \vert \leq k} \| D^{\alpha}u\|_{L_p(\Omega)}^{p} \right)^{\frac{1}{p}}.
\end{equation*}
For $p=\infty$, these definitions become
\begin{equation*}
\vert u \vert_{W_{\infty}^k(\Omega)} = \sup_{\vert \alpha \vert = k} \|D^{\alpha}u\|_{L_{\infty}(\Omega)} \quad \mbox{and} \quad \| u \|_{W_{\infty}^k(\Omega)} = \sup_{\vert \alpha \vert \leq k} \|D^{\alpha}u\|_{L_{\infty}(\Omega)}.
\end{equation*}
Let $1 \leq p < \infty$, $k \in \mathbb{N}_0$, and $0 < s <1$. Then we can define the fractional Sobolev spaces $W_p^{k+s}(\Omega)$ as all $u$ for which the two norms
\begin{eqnarray*}
|u|_{W_p^{k+s}(\Omega)} &:=& \left( \sum_{|\alpha|=k} \int_{\Omega} \int_{\Omega} \frac{|D^{\alpha}u(\mathbf{x}) - D^{\alpha}u(\mathbf{y})|^p}{\|\mathbf{x} - \mathbf{y}\|_2^{d+ps}} \mathrm{d}\mathbf{x}\,\mathrm{d}\mathbf{y} \right)^{1/p} \\
\|u\|_{W_p^{k+s}(\Omega)} &:=& \left( \|u\|_{W_p^k(\Omega)}^p+ |u|_{W_p^{k+s}(\Omega)}^p \right)^{1/p}
\end{eqnarray*}
are finite. For the case $p=2$, we write $W_2^k(\Omega) = H^k(\Omega)$ and $L_2(\Omega) = W_2^0(\Omega)$.

The functions that we will be concerned with are defined on a bounded domain $\Omega$ with a Lipschitz boundary. As a result, there is an extension operator for functions defined in Sobolev spaces which is presented in the following lemma. For further details, we refer the reader to \cite{Ste70}
and \cite{DevS93}.
\begin{lem}
\label{lemExtensionOperator}
Suppose $\Omega \subseteq \mathbb{R}^d$ has a Lipschitz boundary. Then there is an extension mapping $E_S: H^{\tau}(\Omega) \rightarrow H^{\tau}(\mathbb{R}^d)$ defined for all non-negative real $\tau$ satisfying $E_S v\vert_{\Omega} = v \mbox{ for all } v \in H^{\tau}(\Omega)$ and
\begin{equation*}
\|E_Sv\|_{H^{\tau}(\mathbb{R}^d)} \leq C \|v\|_{H^{\tau}(\Omega)}.
\end{equation*}
\end{lem}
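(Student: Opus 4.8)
The plan is to reduce, via a partition of unity, to the model case of a \emph{special Lipschitz domain} — a region lying above the graph of a globally Lipschitz function — on which one builds a single extension operator bounded on $H^k$ for every integer $k$ simultaneously; non-integer orders then follow by interpolation. This is, in essence, Stein's extension theorem (\cite{Ste70}), so what follows is only a sketch.

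First, since $\partial\Omega$ is compact, cover it by finitely many balls $U_1,\dots,U_N$ such that, after a rigid motion, $\Omega\cap U_j = \{x : x_d > \phi_j(x')\}\cap U_j$ for some Lipschitz $\phi_j$. Pick $U_0 \Subset \Omega$ so that $\{U_j\}_{j=0}^N$ covers $\overline\Omega$, and let $\{\varphi_j\}$ be a smooth partition of unity subordinate to this cover. Writing $v = \sum_{j=0}^N \varphi_j v$, the interior piece $\varphi_0 v$ extends to $\mathbb{R}^d$ by zero, and each $\varphi_j v$ with $j\ge 1$ is, after the rigid motion, supported near the boundary of the special Lipschitz domain $\Omega_j := \{x_d > \phi_j(x')\}$; hence it suffices to produce a bounded extension $H^\tau(G)\to H^\tau(\mathbb{R}^d)$ for such a $G$.

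Second, on $G = \{x_d > \phi(x')\}$ take a regularized distance $\delta^*$ (smooth in $G$, comparable to $\mathrm{dist}(\cdot,\partial G)$, with all derivatives controlled) and set, for $x_d < \phi(x')$,
$$\mathcal{E}v(x', x_d) = \int_1^\infty v\bigl(x', x_d - \lambda\,\delta^*(x', x_d)\bigr)\,\psi(\lambda)\,\mathrm{d}\lambda,$$
where $\psi$ is fixed on $[1,\infty)$ with $\int_1^\infty\psi = 1$, $\int_1^\infty \lambda^m \psi(\lambda)\,\mathrm{d}\lambda = 0$ for all $m\ge 1$, and rapid decay. One checks that $\mathcal{E}v = v$ on $G$, that the one-sided derivatives of $\mathcal{E}v$ match those of $v$ along $\partial G$ (this is where the vanishing moments of $\psi$ enter), and — relying only on the bounded derivatives of $\delta^*$, which is all a Lipschitz boundary allows — that $\|\mathcal{E}v\|_{H^k(\mathbb{R}^d)} \le C_k\|v\|_{H^k(G)}$ for every $k\in\mathbb{N}_0$, with one operator $\mathcal{E}$ serving all $k$. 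Reassembling the pieces (undoing the rigid motions and multiplying by cutoffs equal to $1$ on the supports of the $\varphi_j$) yields $E_S: H^k(\Omega)\to H^k(\mathbb{R}^d)$ with $E_S v|_\Omega = v$ and the stated bound for all integer $k\ge 0$.

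Third, for $\tau = k+s$ with $0<s<1$, use that $H^\tau$ (on $\Omega$ and on $\mathbb{R}^d$) equals the real interpolation space $(H^k, H^{k+1})_{s,2}$ with equivalent norm; since the single operator $E_S$ is bounded $H^k\to H^k$ and $H^{k+1}\to H^{k+1}$, interpolation gives boundedness $H^\tau(\Omega)\to H^\tau(\mathbb{R}^d)$, and $E_S v|_\Omega = v$ passes from the dense subspace $H^{k+1}(\Omega)$ to all of $H^\tau(\Omega)$ by continuity. The main obstacle is precisely the Lipschitz (not smooth) boundary: a plain reflection across a flattened boundary would differentiate a bi-Lipschitz chart too many times and break down for $k\ge 2$, so one genuinely needs the reflection-along-$\delta^*$ device, and verifying its derivative estimates is the technical core; the partition-of-unity bookkeeping and the interpolation step are routine. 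Full details are in \cite{Ste70} and \cite{DevS93}.
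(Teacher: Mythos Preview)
Your sketch is correct and follows exactly the classical Stein construction that the paper has in mind: the paper does not give its own proof of this lemma but simply cites \cite{Ste70} and \cite{DevS93} for the result, and your outline (localization by partition of unity, the reflection-across-regularized-distance operator on special Lipschitz domains, then real interpolation for non-integer $\tau$) is precisely Stein's argument.
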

$C$ will denote a generic constant.
Since we also have $\|v\|_{H^{\tau}(\Omega)} \leq \|E_S v\|_{H^{\tau}(\mathbb{R}^d)}$, this means that when we need to consider the $H^{\tau}(\Omega)$ norms of the errors at each level, we can carry out our error analysis in the $H^{\tau}(\mathbb{R}^d)$-norm. This is advantageous, since we then have for $g \in H^{\tau}(\mathbb{R}^d)$
\begin{equation}
\label{eqnHtauNorm}
\|g\|_{H^{\tau}(\mathbb{R}^d)}^2 = \int\limits_{\mathbb{R}^d} |\widehat{g}(\boldsymbol{\omega})|^2 \left(1 + \|\boldsymbol{\omega}\|_2^2 \right)^{\tau} \mathrm{d}\boldsymbol{\omega},
\end{equation}
upon defining the Fourier transform as
$$\widehat{g}(\boldsymbol{\omega}) = \left(2\pi\right)^{-d/2} \int\limits_{\mathbb{R}^d} g(\boldsymbol{x}) e^{-i\mathbf{x}^T\boldsymbol{\omega}} \mathrm{d}\mathbf{x}.$$

At each level, we will also require a kernel $\Psi: \Omega \times \Omega \rightarrow \mathbb{R}$. We will use the Wendland compactly supported radial basis functions \cite{Wen05} with a (level-specific) scaling parameter $\delta>0$. We recall that for a given spatial dimension $d$ and smoothness parameter $k \in \mathbb{N}$, the Wendland functions are defined as
\begin{equation}
\psi_{\ell,k}(r) := \left\{
 \begin{array}{ll}
 \displaystyle{\frac{1}{\Gamma(k) \, 2^{k-1}} \int_r^1 s \,(1-s)^{\ell} (s^2-r^2)^{k-1} \mathrm{d}s} & \displaystyle{\mathrm{for }\hspace{0.1in} 0 \leq r \leq 1}, \\ \\[-2ex]
0 & \displaystyle{\mathrm{for }\hspace{0.1in} r > 1,}
\end{array} \right.
 \label{formulaWFsIntDefn}
\end{equation}
with $\ell := \lfloor \frac{d}{2} \rfloor + k + 1$. This choice of $\ell$ ensures the Wendland functions are positive definite. It is the reproducing kernel of a Hilbert space which is norm equivalent to the Sobolev space $H^{\frac{d+1}{2}+k}(\mathbb{R}^d)$.
For the Wendland basis functions, there exist two constants $0 < c_1 \leq c_2$, which depend on $d$ and $k$, such that their Fourier transforms satisfy \cite{Wen05}
\begin{equation}
\label{eqnWendlandFourierTequiv}
c_1\left(1 + \|\boldsymbol{\omega}\|_2^2 \right)^{-\frac{d}{2}-k-\frac{1}{2}} \leq \widehat{\Psi}_{\ell,k}(\boldsymbol{\omega}) \leq c_2 \left(1 + \|\boldsymbol{\omega}\|_2^2 \right)^{-\frac{d}{2}-k-\frac{1}{2}}, \quad \boldsymbol{\omega} \in \mathbb{R}^d.
\end{equation}

With a given kernel $\Psi$ and scaling factor $\delta >0$, we define the scaled kernel as
\begin{equation}
\Psi_{\delta}(\mathbf{x}) := \delta^{-d} \psi\left( \frac{\|\mathbf{x}\|_2}{\delta}\right). \label{eqnDefnScaledKernel}
\end{equation}

Appropriate selection of the scaling parameters will also prove to be one of the important ingredients for convergence of our multiscale algorithm.

We will need norm equivalence as stated in the following lemma from \cite{CheL12}.
\begin{lem}
\label{lemPhiDeltaHtauNormEquiv}
For every $\delta \in (0,\delta_a]$ and for all $g \in H^{\tau}(\mathbb{R}^d)$, there exist constants $0 < c_3 \leq c_4$ such that
\begin{equation*}
c_3 \|g\|_{\Phi_{\delta}} \leq \|g\|_{H^{\tau}(\mathbb{R}^d)} \leq c_4 \delta^{-\tau} \|g\|_{\Phi_{\delta}}.
\end{equation*}
\end{lem}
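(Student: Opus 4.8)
The plan is to pass to the Fourier domain, where both norms have transparent representations, and to reduce the claim to an elementary two-sided comparison of the weights $(1+\delta^2\|\boldsymbol\omega\|_2^2)^{\tau}$ and $(1+\|\boldsymbol\omega\|_2^2)^{\tau}$ for $\delta\in(0,\delta_a]$, where $\tau=\tfrac{d+1}{2}+k$. Here $\|\cdot\|_{\Phi_\delta}$ denotes the native-space norm of the scaled kernel $\Phi_\delta$ of \eqref{eqnDefnScaledKernel}, and $\Phi$ is a continuous, compactly supported, positive definite radial kernel whose Fourier transform satisfies the two-sided bound \eqref{eqnWendlandFourierTequiv}. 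Since such a $\Phi$ lies in $L_1(\mathbb{R}^d)\cap C(\mathbb{R}^d)$ with $\widehat\Phi>0$, its native space is, as a set, $\{g\in L_2(\mathbb{R}^d):\widehat g/\sqrt{\widehat\Phi}\in L_2(\mathbb{R}^d)\}$, with
\begin{equation*}
\|g\|_{\Phi}^2=(2\pi)^{-d/2}\int_{\mathbb{R}^d}\frac{|\widehat g(\boldsymbol\omega)|^2}{\widehat\Phi(\boldsymbol\omega)}\,\mathrm{d}\boldsymbol\omega,
\end{equation*}
see \cite{Wen05}. First I would record the scaling identity $\widehat{\Phi_\delta}(\boldsymbol\omega)=\widehat\Phi(\delta\boldsymbol\omega)$, obtained from \eqref{eqnDefnScaledKernel} by the substitution $\mathbf{y}=\mathbf{x}/\delta$ in the defining Fourier integral, whence
\begin{equation*}
\|g\|_{\Phi_\delta}^2=(2\pi)^{-d/2}\int_{\mathbb{R}^d}\frac{|\widehat g(\boldsymbol\omega)|^2}{\widehat\Phi(\delta\boldsymbol\omega)}\,\mathrm{d}\boldsymbol\omega.
\end{equation*}

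Next I would insert \eqref{eqnWendlandFourierTequiv} evaluated at $\delta\boldsymbol\omega$, giving
\begin{equation*}
\frac{1}{c_2}\bigl(1+\delta^2\|\boldsymbol\omega\|_2^2\bigr)^{\tau}\le\frac{1}{\widehat\Phi(\delta\boldsymbol\omega)}\le\frac{1}{c_1}\bigl(1+\delta^2\|\boldsymbol\omega\|_2^2\bigr)^{\tau},
\end{equation*}
so that $\|g\|_{\Phi_\delta}^2$ is comparable, with constants depending only on $c_1,c_2,d$, to $\int_{\mathbb{R}^d}|\widehat g(\boldsymbol\omega)|^2(1+\delta^2\|\boldsymbol\omega\|_2^2)^{\tau}\,\mathrm{d}\boldsymbol\omega$, whereas by \eqref{eqnHtauNorm} one has exactly $\|g\|_{H^{\tau}(\mathbb{R}^d)}^2=\int_{\mathbb{R}^d}|\widehat g(\boldsymbol\omega)|^2(1+\|\boldsymbol\omega\|_2^2)^{\tau}\,\mathrm{d}\boldsymbol\omega$. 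It then remains only to compare the two weights pointwise in $\boldsymbol\omega$, uniformly over $\delta\in(0,\delta_a]$. For one direction I would use $1+\delta^2\|\boldsymbol\omega\|_2^2\le\max(1,\delta_a^2)\,(1+\|\boldsymbol\omega\|_2^2)$; for the other, $\delta^2(1+\|\boldsymbol\omega\|_2^2)=\delta^2+\delta^2\|\boldsymbol\omega\|_2^2\le\max(1,\delta_a^2)\,(1+\delta^2\|\boldsymbol\omega\|_2^2)$, which after dividing by $\delta^2$ and raising to the power $\tau$ becomes $(1+\|\boldsymbol\omega\|_2^2)^{\tau}\le\max(1,\delta_a^2)^{\tau}\,\delta^{-2\tau}\,(1+\delta^2\|\boldsymbol\omega\|_2^2)^{\tau}$. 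Substituting these two bounds under the integral signs and taking square roots produces the asserted inequalities with explicit constants $c_3,c_4$ that depend only on $d$, $k$ and $\delta_a$, and in particular are independent of $\delta\in(0,\delta_a]$ and of $g$.

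I do not anticipate a real obstacle; the only feature worth a comment is the asymmetry of the statement — the factor $\delta^{-\tau}$ entering only on the right-hand side. This factor cannot be removed and, for small $\delta$, is essentially sharp, since $\sup_{\boldsymbol\omega}(1+\|\boldsymbol\omega\|_2^2)^{\tau}/(1+\delta^2\|\boldsymbol\omega\|_2^2)^{\tau}$ is comparable to $\delta^{-2\tau}$ (the ratio tends to $\delta^{-2\tau}$ as $\|\boldsymbol\omega\|_2\to\infty$ when $\delta\le1$), while the reciprocal ratio remains bounded by $\max(1,\delta_a^2)^{\tau}$ over the whole range, which is exactly why the left-hand inequality carries no power of $\delta$. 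The argument uses nothing about $\Phi$ beyond $\widehat\Phi\asymp(1+\|\cdot\|_2^2)^{-\tau}$, so it transfers verbatim to any kernel whose native space is norm-equivalent to $H^{\tau}(\mathbb{R}^d)$; the one thing to watch is that all hidden constants stay uniform in $\delta\le\delta_a$, which is precisely what the two elementary weight estimates guarantee.
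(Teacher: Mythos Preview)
Your argument is correct. Note, however, that the paper does not prove this lemma at all: it is quoted from \cite{CheL12}, so there is no ``paper's own proof'' to compare against. That said, your Fourier-side weight comparison is the standard route and is precisely the mechanism the authors invoke when they prove the matrix-valued analogue (Lemma~\ref{lemStokesNormEquiv}) ``by using arguments similar to \cite[Lemma 2.2]{CheL12}'', and the same inequality $\delta^2(1+\|\boldsymbol\omega\|_2^2)\le(1+\delta^2\|\boldsymbol\omega\|_2^2)$ for $\delta\le1$ reappears verbatim as \eqref{eqnStokesNormEquivStep1} in the condition-number section. So your proposal matches both the intended argument and the way the paper itself deploys the idea elsewhere.
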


As we will be working with vectors, in particular for $\mathbf{u}$, we will need to define vector-valued Sobolev spaces in the usual way as
\[
\mathbf{H}^{\tau}(\Omega) := H^{\tau}(\Omega) \times \ldots \times H^{\tau}(\Omega),
\]
with norm
\begin{equation}
\|\mathbf{f}\|_{\mathbf{H}^{\tau}(\Omega)} := \left( \sum_{j=1}^d \|f_j\|_{H^{\tau}(\Omega)}^2 \right)^{1/2}. \label{defnVectorSobolevNorm}
\end{equation}
Now we define divergence-free approximation spaces in $\Omega$ and in $\mathbb{R}^d$. With the divergence of $\mathbf{u}: \Omega \rightarrow \mathbb{R}^d$ defined as
\[
\nabla \cdot \mathbf{u} := \sum_{j=1}^d \partial_j u_j\,\, ,
\]
we define
\[
\mathbf{H}^{\tau}(\Omega; \mbox{div}) := \left\{ \mathbf{u} \in \mathbf{H}^{\tau}(\Omega): \nabla \cdot \mathbf{u} = 0 \right\},
\]
and
\[
\widetilde{\mathbf{H}}^{\tau}(\mathbb{R}^d;\mbox{div}) := \left\{ \mathbf{f} \in \mathbf{H}^{\tau}(\mathbb{R}^d;\mbox{div}): \int_{\mathbb{R}^d} \frac{\|\widehat{\mathbf{f}}(\boldsymbol{\omega})\|_2^2}{\|\boldsymbol{\omega}\|_2^2} \left(1 + \|\boldsymbol{\omega}\|_2^2 \right)^{\tau+1} \mathrm{d} \boldsymbol{\omega} < \infty \right\},
\]
with norm
\[
\|\mathbf{f}\|^2_{\widetilde{\mathbf{H}}^{\tau}(\mathbb{R}^d;\scriptstyle \text{div})} := (2\pi)^{-d/2} \int_{\mathbb{R}^d} \frac{\|\widehat{\mathbf{f}}(\boldsymbol{\omega})\|_2^2}{\|\boldsymbol{\omega}\|_2^2} \left(1 + \|\boldsymbol{\omega}\|_2^2 \right)^{\tau+1} \mathrm{d} \boldsymbol{\omega}.
\]
We note that $\widetilde{\mathbf{H}}^{\tau}(\mathbb{R}^d;\mbox{div})$ is a subspace of $\mathbf{H}^{\tau}(\mathbb{R}^d;\mbox{div})$. We will also need that for $\Omega \subseteq \mathbb{R}^d$ being a simply connected domain with $C^{\lceil\tau \rceil,1}$ boundary for $d=2,3$ and with $\tau \geq 0$, there exists a continuous operator
\[
\widetilde{\mathbf{E}}_{\scriptstyle \text{div}}: \mathbf{H}^{\tau}(\Omega;\mbox{div}) \rightarrow \widetilde{\mathbf{H}}^{\tau}(\mathbb{R}^d;\mbox{div}),
\]
such that $\widetilde{\mathbf{E}}_{\scriptstyle \text{div}}\mathbf{u}\vert \Omega  = \mathbf{u}$ for all $\mathbf{u} \in \mathbf{H}^{\tau}(\Omega; \mbox{div})$ \cite[Proposition 3.8]{Wen09}. For $d=3$, this operator is defined as
\begin{equation}
\label{eqnEdivDefn}
\widetilde{\mathbf{E}}_{\scriptstyle \text{div}}\mathbf{u} :=
\nabla \times E_S T \mathbf{u},
\end{equation}
where $E_S$ is the extension operator defined in Lemma \ref{lemExtensionOperator} and $\mathbf{v}=T\mathbf{u}$  is the unique solution of the boundary
value problem
\[
 \mathbf{u} = \nabla \times\mathbf{v}, \;\;\nabla \cdot \mathbf{v} = 0 \text{ in } \Omega,\qquad
 \mathbf{v} \cdot \mathbf{n} = 0 \text{ on } \partial \Omega.
\]
%is a bounded operator $T: \mathbf{H}^{\tau}(\Omega; \mbox{div}) \rightarrow \mathbf{H}^{\tau+1}(\Omega)$ with $\tau = k+\theta$ for $k \in \mathbb{N}_0$ and $\theta \in [0,1]$, see \cite[Lemma 4]{FuselierMathComp2008}.
For $d=2$, formula \eqref{eqnEdivDefn} is replaced by
\begin{equation}
\label{eqnEdivDefnd2}
\widetilde{\mathbf{E}}_{\scriptstyle \text{div}}\mathbf{u} :=
  \text{curl}\; E_S T \mathbf{u},
\end{equation}
where $T \mathbf{u} = \psi$ with $\mathbf{u} = \text{curl}\;\psi
= (\partial_y \psi, -\partial_x \psi)$.

Using the idea from \cite[Lemma 4]{Fus08b} and interpolation
of operators (see \cite[Proposition (14.1.5)]{BreS08}) we can show that
the operator $T: \mathbf{H}^{\tau}(\Omega; \mbox{div}) \rightarrow \mathbf{H}^{\tau+1}(\Omega)$, with $\tau = k+\theta$ for $k \in \mathbb{N}_0$ and $\theta \in [0,1]$, is bounded.

To measure the pressure, which is determined only up to a constant, we will use the norm
\[
\|p\|_{H^{\tau}(\Omega)/\mathbb{R}} := \inf_{c \in \mathbb{R}} \|p+c\|_{H^{\tau}(\Omega)}.
\]

We follow \cite{GieW06} to define Sobolev norms and the mesh norm on the boundary. We assume that $\partial \Omega \subseteq \cup_{j=1}^K V_j$, where $V_j \subseteq \mathbb{R}^d$ are open sets. The sets $V_j$ are images of $C^{k,s}-$diffeomorphisms
 \begin{equation*}
 \varphi_j : B \rightarrow V_j,
 \end{equation*}
 where $B = B(0,1)$ denotes the unit ball in $\mathbb{R}^{d-1}$. If $\{w_j\}$ is a partition of unity with respect to $\{V_j\}$, then the Sobolev norms on $\partial \Omega$ can be defined as
 \begin{equation*}
 \|u\|_{W_p^{\mu}(\partial \Omega)}^p := \sum_{j=1}^K \| (u w_j) \circ \varphi_j \|_{W_p^{\mu}(B)}^p.
 \end{equation*}
 The mesh norm on the boundary can be defined as
 \begin{equation*}
 h_{X,\partial \Omega} := \max_{1 \leq j \leq K} h_{T_j,B},
 \end{equation*}
 with $T_j := \varphi_j^{-1}(X \cap V_j) \subseteq B$.
 Finally, we will need the following ``sampling" inequalities, which are valid for both scalars and vectors \cite{NarWW05,NarWW06,Wen09}.
\begin{thm}
Let $\Omega \subseteq \mathbb{R}^d$ be a bounded domain with Lipschitz boundary. Let $\tau > d/2$. Let $X \subseteq \Omega$ be a discrete set having mesh norm $h$ sufficiently small. For each $w \in H^{\tau}(\Omega)$ with $w|X=0$ we have for $0 \leq \sigma \leq \tau$ that
\begin{equation}
\|w\|_{H^{\sigma}(\Omega)} \leq C h^{\tau-\sigma} \|w\|_{H^{\tau}(\Omega)}. \label{eqnSampling}
\end{equation}
\end{thm}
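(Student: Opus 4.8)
This is the scattered-zeros sampling inequality of Narcowich, Ward and Wendland \cite{NarWW05,NarWW06,Wen09}; I sketch the route I would follow. I would first reduce to the endpoint $\sigma=0$, namely
\begin{equation*}
\|w\|_{L_2(\Omega)} \le C\,h^{\tau}\,\|w\|_{H^{\tau}(\Omega)} ,
\end{equation*}
since $\sigma=\tau$ is trivial and, $\Omega$ being a Sobolev extension domain (Lemma~\ref{lemExtensionOperator}), $H^{\sigma}(\Omega)$ is the interpolation space $[L_2(\Omega),H^{\tau}(\Omega)]_{\sigma/\tau}$ with equivalent norms; the interpolation inequality then gives $\|w\|_{H^{\sigma}(\Omega)} \le C\,\|w\|_{L_2(\Omega)}^{1-\sigma/\tau}\,\|w\|_{H^{\tau}(\Omega)}^{\sigma/\tau} \le C\,h^{\tau-\sigma}\,\|w\|_{H^{\tau}(\Omega)}$ for $0<\sigma<\tau$.

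For the endpoint the plan is a localization argument. Since $\partial\Omega$ is Lipschitz, $\Omega$ satisfies an interior cone condition; using this together with the assumed smallness of $h$ I would construct, for each sufficiently small $h$, a finite family of patches $\{\Omega_i\}\subseteq\Omega$ that cover $\Omega$, have uniformly bounded overlap, have diameter comparable to $h$, and are each, after dilation by $h^{-1}$, uniformly bi-Lipschitz to one fixed bounded reference domain $D$. Crucially, the patches should be chosen so that the nodes of $X$ lying in $\Omega_i$ are dense enough to form a \emph{norming set} for the polynomials of degree less than $m:=\lceil\tau\rceil$ on $\Omega_i$, i.e.\ $\|p\|_{L_\infty(\Omega_i)}\le 2\max_{\mathbf{x}\in X\cap\Omega_i}|p(\mathbf{x})|$ for all such $p$, with a constant independent of $i$ and of $h$.

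On a single patch I would rescale to $D$ and write $v$ for the corresponding dilate of $w$ there. By the Dupont--Scott (averaged Taylor / Bramble--Hilbert) estimate there is a polynomial $p$ of degree less than $m$ with $\|v-p\|_{H^{\tau}(D)}\le C\,|v|_{H^{\tau}(D)}$, and since $\tau>d/2$ the embedding $H^{\tau}(D)\hookrightarrow C(D)$ gives $\|v-p\|_{L_\infty(D)}\le C\,|v|_{H^{\tau}(D)}$. Because $w$, hence $v$, vanishes at every node of $X$ inside the patch, there $v-p=-p$, so the norming-set property yields $\|p\|_{L_\infty(D)}\le 2\|v-p\|_{L_\infty(D)}\le C\,|v|_{H^{\tau}(D)}$, whence $\|v\|_{L_2(D)}\le C\,\|v\|_{L_\infty(D)}\le C\,|v|_{H^{\tau}(D)}$. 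Undoing the dilation (under which $\|\cdot\|_{L_2(\Omega_i)}=h^{d/2}\|\cdot\|_{L_2(D)}$ and $|\cdot|_{H^{\tau}(\Omega_i)}=h^{d/2-\tau}|\cdot|_{H^{\tau}(D)}$, a net factor $h^{\tau}$) turns this into the local bound $\|w\|_{L_2(\Omega_i)}\le C\,h^{\tau}\,|w|_{H^{\tau}(\Omega_i)}$. Finally I would square, sum over $i$, and use the bounded overlap — for non-integer $\tau=k+s$ noting that the Gagliardo double integrals over $\Omega_i\times\Omega_i$ sit inside $\Omega\times\Omega$ with multiplicity bounded by the overlap constant — to get $\|w\|_{L_2(\Omega)}^2\le C\,h^{2\tau}\sum_i|w|_{H^{\tau}(\Omega_i)}^2\le C\,h^{2\tau}\,|w|_{H^{\tau}(\Omega)}^2$, which is the endpoint estimate.

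The hard part is the patch construction in the second step: one must show that, for $h$ small enough, the interior cone condition lets every patch — including those abutting $\partial\Omega$ — be taken comparable to a single reference domain while still enclosing enough scattered nodes that polynomials on it are controlled by their nodal values with a constant uniform in $h$ and in the patch. That uniform bound is a Markov-type inequality for scattered zeros (cf.\ \cite{NarWW05}), and it is exactly what forces the hypothesis that $h$ be sufficiently small; the remaining ingredients — the Bramble--Hilbert lemma, the rescaling, and the summation — are routine.
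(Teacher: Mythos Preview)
The paper does not prove this theorem; it is quoted from \cite{NarWW05,NarWW06,Wen09} as a known sampling inequality, so there is no in-paper proof to compare against. Your sketch is essentially the argument of those references: the heart is exactly the local polynomial reproduction/norming-set step on patches of size $\sim h$, combined with a Bramble--Hilbert-type bound and summation with bounded overlap. One small difference worth noting is that the cited papers obtain the full range $0\le\sigma\le\tau$ directly from the local estimate (the Dupont--Scott bound already controls all lower-order norms of $v-p$ on the reference domain, so after rescaling one gets $\|w\|_{H^{\sigma}(\Omega_i)}\le C h^{\tau-\sigma}|w|_{H^{\tau}(\Omega_i)}$ for each $\sigma$), rather than proving only the endpoint $\sigma=0$ and then interpolating; your interpolation route is perfectly valid but slightly less direct and needs the extra fact that $H^{\sigma}(\Omega)=[L_2(\Omega),H^{\tau}(\Omega)]_{\sigma/\tau}$ on a Lipschitz domain.
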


\begin{thm}
Let $\tau = k+s >d/2$. Let $\Omega \subseteq \mathbb{R}^d$ be a bounded domain having $C^{k,s}$ smooth boundary. Let $X \subseteq \partial \Omega$ be a discrete set with $h$ sufficiently small. Then there is a positive constant C such that for all $w \in H^{\tau}(\Omega)$ with $w|X=0$ we have for $0 \leq \sigma \leq \tau - 1/2$ that
\begin{equation}
\|w\|_{H^{\sigma}(\partial \Omega)} \leq C h^{\tau-1/2-\sigma} \|w\|_{H^{\tau}(\Omega)}. \label{eqnSamplingBdy}
\end{equation}
\end{thm}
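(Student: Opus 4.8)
The plan is to reduce the boundary estimate \eqref{eqnSamplingBdy} to the interior sampling inequality \eqref{eqnSampling}, applied on the unit ball $B \subseteq \mathbb{R}^{d-1}$, via the local charts $\varphi_j : B \to V_j$ that define the Sobolev norms and the mesh norm on $\partial \Omega$, together with the trace theorem $H^{\tau}(\Omega) \to H^{\tau-1/2}(\partial \Omega)$. Passing from $\Omega$ to $\partial \Omega$ costs half a derivative twice over --- once in the ambient smoothness (through the trace) and once in the target exponent --- which is precisely why the admissible range is $0 \le \sigma \le \tau - 1/2$; the power of $h$ is then delivered by \eqref{eqnSampling} on $B$, and the hypothesis $\tau > d/2$ is exactly what gives $\tau - 1/2 > (d-1)/2$, so that \eqref{eqnSampling} is applicable there with smoothness index $\tau - 1/2$.

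Concretely, I would fix $w \in H^{\tau}(\Omega)$ with $w|_X = 0$ and set $g := w|_{\partial \Omega}$. Since $\partial \Omega$ is $C^{k,s}$ and $\tau - 1/2 < \tau = k+s$, the trace theorem gives $g \in H^{\tau-1/2}(\partial \Omega)$ with $\|g\|_{H^{\tau-1/2}(\partial \Omega)} \le C \|w\|_{H^{\tau}(\Omega)}$. For each $j = 1, \dots, K$ put $g_j := (g\, w_j) \circ \varphi_j$; by the definition of $H^{\tau-1/2}(\partial \Omega)$ each $g_j$ lies in $H^{\tau-1/2}(B)$, it is compactly supported in $B$ because $w_j$ is subordinate to $V_j$, and, since $g$ vanishes on $X$ and $\varphi_j$ is a bijection, $g_j$ vanishes on $T_j := \varphi_j^{-1}(X \cap V_j)$. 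Applying \eqref{eqnSampling} on $B \subseteq \mathbb{R}^{d-1}$ with smoothness index $\tau - 1/2$, discrete set $T_j$ (whose mesh norm satisfies $h_{T_j,B} \le \max_i h_{T_i,B} = h_{X,\partial \Omega} =: h$, small by hypothesis) and function $g_j$, we get for $0 \le \sigma \le \tau - 1/2$
\[
\|g_j\|_{H^{\sigma}(B)} \le C\, h_{T_j,B}^{\,\tau-1/2-\sigma}\, \|g_j\|_{H^{\tau-1/2}(B)} \le C\, h^{\,\tau-1/2-\sigma}\, \|g_j\|_{H^{\tau-1/2}(B)},
\]
the last inequality using $\tau - 1/2 - \sigma \ge 0$.

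Squaring, summing over $j$, and invoking the definition $\|u\|_{H^{\mu}(\partial \Omega)}^2 = \sum_{j=1}^K \|(u w_j)\circ \varphi_j\|_{H^{\mu}(B)}^2$ (for $\mu = \sigma$ and $\mu = \tau - 1/2$) then gives
\[
\|w\|_{H^{\sigma}(\partial \Omega)}^2 = \sum_{j=1}^K \|g_j\|_{H^{\sigma}(B)}^2 \le C\, h^{\,2(\tau-1/2-\sigma)} \sum_{j=1}^K \|g_j\|_{H^{\tau-1/2}(B)}^2 = C\, h^{\,2(\tau-1/2-\sigma)}\, \|g\|_{H^{\tau-1/2}(\partial \Omega)}^2,
\]
and combining with the trace bound and taking square roots yields \eqref{eqnSamplingBdy}. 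The hard part is bookkeeping rather than conceptual: one must check that \eqref{eqnSampling} can be invoked uniformly over the finitely many charts (the ``$h$ sufficiently small'' threshold may be taken as the minimum of the thresholds for the $K$ copies of $B$), and that both the trace theorem and the chart-independence of $H^{\tau-1/2}(\partial \Omega)$ hold at the fractional order $\tau - 1/2$ on a merely $C^{k,s}$ boundary --- the assumption $\tau = k+s$ is exactly what makes the charts $\varphi_j$ and their inverses smooth enough for this. Alternatively, if one prefers to sidestep the fractional trace theorem, one can prove the statement first for integer $\tau$ as in \cite{NarWW05} and recover the fractional case by interpolation, as in \cite{NarWW06,Wen09}.
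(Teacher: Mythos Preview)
The paper does not actually prove this statement; it is stated as a known result and cited from \cite{NarWW05,NarWW06,Wen09} (see the sentence immediately preceding the two sampling theorems in Section~\ref{SectionPreliminaries}). So there is no ``paper's own proof'' to compare against.

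That said, your sketch is essentially the standard argument from those references: pass to the boundary via the trace theorem, pull back to the unit ball $B\subseteq\mathbb{R}^{d-1}$ using the charts $\varphi_j$ that define the boundary Sobolev norms and the boundary mesh norm, apply the interior sampling inequality \eqref{eqnSampling} on $B$ with smoothness index $\tau-1/2$, and sum over the finitely many charts. The observation that $\tau>d/2$ is precisely what ensures $\tau-1/2>(d-1)/2$ so that \eqref{eqnSampling} applies on $B$ is correct, as is the bookkeeping remark that the $C^{k,s}$ regularity of the boundary with $\tau=k+s$ is what makes the chart pullbacks and the fractional trace bounded. One small caveat: to apply \eqref{eqnSampling} to $g_j$ on $B$ you need $g_j|_{T_j}=0$, and $g_j=(g\,w_j)\circ\varphi_j$ vanishes on $T_j$ because $g$ vanishes on $X\cap V_j$, not merely because $\varphi_j$ is a bijection; the partition-of-unity factor $w_j$ is harmless here since it is multiplied by $g$. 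Apart from that, the argument is sound and matches the literature.
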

We also define a matrix-valued function $\mathbf{\Phi}: \mathbb{R}^d \rightarrow \mathbb{R}^{n \times n}$ as being \textit{positive definite} if it is even, so $\mathbf{\Phi}(-\mathbf{x}) = \mathbf{\Phi}(\mathbf{x})$, symmetric, so $\mathbf{\Phi}(\mathbf{x}) = \mathbf{\Phi}(\mathbf{x})^T$, and satisfies
\[
\sum_{j,k=1}^n \boldsymbol{\alpha}_j^T \mathbf{\Phi}(\mathbf{x}_j - \mathbf{x}_k) \boldsymbol{\alpha}_k > 0,
\]
for all pairwise distinct $\mathbf{x}_j \in \mathbb{R}^d$ and all $\boldsymbol{\alpha}_j \in \mathbb{R}^n$ such that not all $\boldsymbol{\alpha}_j$ are vanishing.

\section{Symmetric collocation approximation} \label{SecnSymmStokes}

We will first consider a single-scale approximant to the combined velocity and pressure vector $\mathbf{v} := (\mathbf{u},p): \mathbb{R}^d \rightarrow \mathbb{R}^{d+1}$, following \cite{NarW94,Fus08,Wen09}. Then \eqref{eqnStokes1}-\eqref{eqnStokes3} become
\begin{eqnarray}
\left(L \mathbf{v} \right)_i := -\nu \sum_{j=1}^d \partial_{jj} v_i + \partial_i v_{d+1} &=& f_i \quad \mbox{ in } \,\, \Omega, \label{eqnModStokes1} \\
\sum_{j=1}^d \partial_j v_j &=& 0 \quad \mbox{ in } \,\, \Omega, \label{eqnModStokes2}\\
v_i &=& g_i \quad \mbox{ on } \,\, \partial \Omega, \label{eqnModStokes3}
\end{eqnarray}
where $1 \leq i \leq d$. We seek a meshfree, kernel-based collocation method with an analytically divergence-free approximation space. We use the notation $\psi_{\tau+1}$ and $\psi_{\tau-1}$ to denote the functions to be used in our matrix-valued kernel. We will mainly be interested in the case where both $\psi_{\tau+1}$ and $\psi_{\tau-1}$ are Wendland functions which, for a given spatial dimension $d$, have native space norms equivalent to the Sobolev spaces $H^{\tau+1}(\mathbb{R}^d)$ and $H^{\tau-1}(\mathbb{R}^d)$ respectively. Their Fourier transforms satisfy
\begin{equation}
\label{eqnFourierUpsilon}
c_{1,\tau+1}(1+\|\boldsymbol{\omega}\|_2^2)^{-\tau-1} \leq \widehat{\psi_{\tau+1}}(\|\boldsymbol{\omega}\|_2) \leq c_{2,\tau+1}(1+\|\boldsymbol{\omega}\|_2^2)^{-\tau-1},
\end{equation}
and
\begin{equation}
\label{eqnFourierPsi}
c_{1,\tau-1}(1+\|\boldsymbol{\omega}\|_2^2)^{-\tau+1} \leq \widehat{\psi_{\tau-1}}(\|\boldsymbol{\omega}\|_2) \leq c_{2,\tau-1}(1+\|\boldsymbol{\omega}\|_2^2)^{-\tau+1},
\end{equation}
and we define $\bar{C}_1 := \min(c_{1,\tau+1},c_{1,\tau-1})$ and $\bar{C}_2 := \max(c_{2,\tau+1},c_{2,\tau-1})$. Then we define the matrix-valued kernel
\begin{equation}
\label{eqnDefnPhi}
\boldsymbol{\Phi} := \begin{pmatrix} \boldsymbol{\Psi}_{\tau+1} & 0 \\ 0 & \psi_{\tau-1} \end{pmatrix}: \mathbb{R}^d \rightarrow \mathbb{R}^{(d+1) \times (d+1)},
\end{equation}
where $\boldsymbol{\Psi}_{\tau+1} := (-\Delta \mathbf{I} + \nabla \nabla^T)\psi_{\tau+1}$ with $\mathbf{I}$ denoting the identity matrix. We note that $\boldsymbol{\Psi}_{\tau+1}$ is also positive definite \cite{NarW94} and hence due to the tensor product construction of $\mathbf{\Phi}$, it is positive definite as well. This choice for $\boldsymbol{\Psi}_{\tau+1}$ is known to lead to divergence-free interpolants \cite{NarW94}. We also note that
\begin{equation}
\widehat{\mathbf{\Psi}_{\tau+1}}(\boldsymbol{\omega}) = \left( \|\boldsymbol{\omega}\|_2^2 \mathbf{I} - \boldsymbol{\omega}\boldsymbol{\omega}^T\right) \widehat{\psi_{\tau+1}}(\boldsymbol{\omega}). \label{eqnFourierUpsilonDefn}
\end{equation}
We will consider the case where the collocation points are the same as the RBF centres. We denote the interior centres by $X_1 := \{\mathbf{x}_1,\ldots,\mathbf{x}_N\}$ and the boundary centres by $X_2 :=  \{\mathbf{x}_{N+1},\ldots,\mathbf{x}_M\}$ and their union by $X = X_1 \cup X_2$, with mesh norms $h_1$ and $h_2$ respectively. Since \eqref{eqnModStokes2} is automatically satisfied, this means that our approximant and collocation conditions will consist of $dN$ terms from \eqref{eqnModStokes1} and $d(M-N)$ terms from \eqref{eqnModStokes3}. Then with $L^{y}$ denoting the operator $L$ acting as a function of the second argument, applied to rows of $\boldsymbol{\Phi}$, our approximant takes the form
\begin{equation}
\mathbf{S}_{X}\mathbf{v}(\mathbf{x}) = \sum_{i=1}^d \sum_{j=1}^N \alpha_{i,j}  \left( L^{y} \boldsymbol{\Phi}\left(\mathbf{x}-\mathbf{x}_j\right) \right)_i + \sum_{i=1}^d  \sum_{j=N+1}^M \alpha_{i,j} \boldsymbol{\Phi}\left(\mathbf{x}-\mathbf{x}_j\right)_i, \label{eqnStokesApproxDefn} \\
\end{equation}
where the notation $\boldsymbol{\Phi}_i$ means column $i$ of the matrix $\boldsymbol{\Phi}$ and $\mathbf{S}_{X}\mathbf{v} = (\mathbf{S}_{X}\mathbf{u},S_X p)$.
The coefficients $\alpha_{i,j}$, $1\leq i \leq d$, $1 \leq j \leq M$ are determined by the collocation conditions
\begin{eqnarray}
\left( L \mathbf{S}_{X}\mathbf{v}(\mathbf{x}_j) \right)_i &=& f_i(\mathbf{x}_j), \quad 1 \leq i \leq d, \,\, j=1,\ldots,N, \label{eqnColl1}\\
\left( \mathbf{S}_{X}\mathbf{v}(\mathbf{x}_j) \right)_i &=& g_i(\mathbf{x}_j), \quad 1 \leq i \leq d, \,\, j=N+1,\ldots,M. \label{eqnColl2}
\end{eqnarray}
From \cite{Fus08,Wen09}, we know that if $\psi_{\tau+1},\psi_{\tau-1}$ are positive definite and if $\mathbf{\Psi}_{\tau+1} \in W_1^2(\mathbb{R}^d)\cap C^2(\mathbb{R}^d)$, then the native space of the kernel $\mathbf{\Phi}$ given by \eqref{eqnDefnPhi} is
\[
\mathcal{N}_{\mathbf{\Phi}}(\mathbb{R}^d) = \mathcal{N}_{\mathbf{\Psi}_{\tau+1}}(\mathbb{R}^d) \times \mathcal{N}_{\psi_{\tau-1}}(\mathbb{R}^d),
\]
with norm
\begin{eqnarray}
\|\mathbf{f}\|^2_{\mathcal{N}_\mathbf{\Phi}(\mathbb{R}^d)} &=& \|\mathbf{f_u}\|^2_{\mathcal{N}_{\mathbf{\Psi}_{\tau+1}}(\mathbb{R}^d)} + \|f_p\|^2_{\mathcal{N}_{\psi_{\tau-1}}(\mathbb{R}^d)} \nonumber \\
&=& (2\pi)^{-d/2} \int_{\mathbb{R}^d} \Bigg[ \frac{\|\widehat{\mathbf{f}}_{\mathbf{u}}(\boldsymbol{\omega})\|_2^2}{\|\boldsymbol{\omega}\|_2^2 \widehat{\psi_{\tau+1}}(\boldsymbol{\omega})} + \frac{\vert \widehat{f}_p(\boldsymbol{\omega}) \vert^2}{\widehat{\psi_{\tau-1}}(\boldsymbol{\omega})} \Bigg] \mathrm{d} \boldsymbol{\omega}, \label{eqnStokesPhiNorm}
\end{eqnarray}
where $\mathbf{f} = (\mathbf{f}_{\mathbf{u}},f_p)^T$ with $\mathbf{f}_{\mathbf{u}}: \mathbb{R}^d \rightarrow \mathbb{R}^d$ and $f_p: \mathbb{R}^d \rightarrow \mathbb{R}$. We recall that the generalised interpolant satisfies \cite[Chapter 16]{Wen05}
\[
\|\mathbf{Ev} - \mathbf{S}_{X}\mathbf{Ev}\|_{\mathcal{N}_{\mathbf{\Phi}}(\mathbb{R}^d)} \leq \|\mathbf{Ev}\|_{\mathcal{N}_{\mathbf{\Phi}}(\mathbb{R}^d)}.
\]
With \eqref{eqnFourierUpsilon} and \eqref{eqnFourierPsi}, we define the extension operator for the velocity-pressure vector $\mathbf{v}$ as
\begin{equation}
\mathbf{Ev} := \left(\widetilde{\mathbf{E}}_{\scriptstyle \text{div}}\mathbf{u},E_{\scriptstyle \text{S}} p \right), \label{eqnStokesExtension}
\end{equation}
where $E_{\scriptstyle \text{S}}$ is the classical Stein extension operator as defined in Lemma \ref{lemExtensionOperator}. Then the native space of our approximant given by \eqref{eqnStokesApproxDefn} is
\[
\mathbf{E}: \mathbf{H}^{\tau}(\Omega;\mbox{div}) \times H^{\tau-1}(\Omega) \rightarrow \mathcal{N}_{\mathbf{\Phi}}(\mathbb{R}^d) = \widetilde{\mathbf{H}}^{\tau}(\mathbb{R}^d;\mbox{div}) \times H^{\tau-1}(\mathbb{R}^d).
\]
Once again we can define interpolants with scaled kernels. In this case, we define the matrix-valued kernel
\begin{equation}
\label{eqnDefnPhiDelta}
\boldsymbol{\Phi}_{\delta} := \begin{pmatrix} \boldsymbol{\Psi}_{\tau+1,\delta} & 0 \\ 0 & \psi_{\tau-1,\delta} \end{pmatrix}: \mathbb{R}^d \rightarrow \mathbb{R}^{(d+1) \times (d+1)},
\end{equation}
where $\boldsymbol{\Psi}_{\tau+1,\delta} := (-\Delta \mathbf{I} + \nabla \nabla^T)\psi_{\tau+1,\delta}$ and the scaled basis functions are defined as in \eqref{eqnDefnScaledKernel}. Then the native space of the kernel $\mathbf{\Phi}_{\delta}$ is given by
\[
\mathcal{N}_{\mathbf{\Phi}_{\delta}}(\mathbb{R}^d) = \mathcal{N}_{\mathbf{\Psi}_{\tau+1,\delta}}(\mathbb{R}^d) \times \mathcal{N}_{\psi_{\tau-1,\delta}}(\mathbb{R}^d),
\]
with norm
\begin{eqnarray}
\|\mathbf{f}\|^2_{\mathcal{N}_{\mathbf{\Phi}_{\delta}}(\mathbb{R}^d)} &=& \|\mathbf{f_u}\|^2_{\mathcal{N}_{\mathbf{\Psi}_{\tau+1,\delta}}(\mathbb{R}^d)} + \|f_p\|^2_{\mathcal{N}_{\psi_{\tau-1,\delta}}(\mathbb{R}^d)} \nonumber \\
&=& (2\pi)^{-d/2} \int_{\mathbb{R}^d} \Bigg[ \frac{\|\widehat{\mathbf{f}}_{\mathbf{u}}(\boldsymbol{\omega})\|_2^2}{\|\boldsymbol{\omega}\|_2^2 \widehat{\psi_{\tau+1,\delta}}(\boldsymbol{\omega})} + \frac{\vert \widehat{f}_p(\boldsymbol{\omega}) \vert^2}{\widehat{\psi_{\tau-1,\delta}}(\boldsymbol{\omega})} \Bigg] \mathrm{d} \boldsymbol{\omega}. \label{eqnStokesPhiDeltaNorm}
\end{eqnarray}
We will need norm equivalence as stated in the following lemma.
\begin{lem}
\label{lemStokesNormEquiv}
For every $\delta \in (0,\delta_a]$ where $\psi_{\tau+1}$ and $\psi_{\tau-1}$ generate $H^{\tau+1}(\mathbb{R}^d)$ and $H^{\tau-1}(\mathbb{R}^d)$ respectively, we have $\mathcal{N}_{\mathbf{\Phi}_{\delta}}(\mathbb{R}^d) = \mathcal{N}_{\mathbf{\Phi}}(\mathbb{R}^d)$ and for every $\mathbf{f} \in \mathcal{N}_{\mathbf{\Phi}}(\mathbb{R}^d)$ there exist positive constants $c_3$ and $c_4$ such that
\[
c_3 \|\mathbf{f}\|_{\mathcal{N}_{\mathbf{\Phi}_{\delta}}(\mathbb{R}^d)} \leq \|\mathbf{f}\|_{\mathcal{N}_{\mathbf{\Phi}}(\mathbb{R}^d)} \leq c_4 \delta^{-\tau-1} \|\mathbf{f}\|_{\mathcal{N}_{\mathbf{\Phi}_{\delta}}(\mathbb{R}^d)}.
\]
\end{lem}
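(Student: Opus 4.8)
The plan is to exploit the product structure of the native space,
$\mathcal{N}_{\mathbf{\Phi}}(\mathbb{R}^d) = \mathcal{N}_{\mathbf{\Psi}_{\tau+1}}(\mathbb{R}^d) \times \mathcal{N}_{\psi_{\tau-1}}(\mathbb{R}^d)$ and likewise $\mathcal{N}_{\mathbf{\Phi}_{\delta}}(\mathbb{R}^d) = \mathcal{N}_{\mathbf{\Psi}_{\tau+1,\delta}}(\mathbb{R}^d) \times \mathcal{N}_{\psi_{\tau-1,\delta}}(\mathbb{R}^d)$, so that by \eqref{eqnStokesPhiNorm} and \eqref{eqnStokesPhiDeltaNorm} it is enough to compare the velocity parts $\|\mathbf{f_u}\|_{\mathcal{N}_{\mathbf{\Psi}_{\tau+1,\delta}}(\mathbb{R}^d)}$ with $\|\mathbf{f_u}\|_{\mathcal{N}_{\mathbf{\Psi}_{\tau+1}}(\mathbb{R}^d)}$ and the pressure parts $\|f_p\|_{\mathcal{N}_{\psi_{\tau-1,\delta}}(\mathbb{R}^d)}$ with $\|f_p\|_{\mathcal{N}_{\psi_{\tau-1}}(\mathbb{R}^d)}$ separately, and then add the squares. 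The point that makes this easy is that each of these native-space norms is a weighted $L_2$-norm of the Fourier transform of its argument, and the scaled and unscaled weights differ only through the factor $\widehat{\psi_{\tau+1}}$ (respectively $\widehat{\psi_{\tau-1}}$): the $\|\boldsymbol{\omega}\|_2^{-2}$ appearing in the velocity part is scale-independent and cancels out of the comparison. Finally, from \eqref{eqnDefnScaledKernel} one has the elementary identity $\widehat{\psi_{\tau\pm1,\delta}}(\|\boldsymbol{\omega}\|_2) = \widehat{\psi_{\tau\pm1}}(\delta\|\boldsymbol{\omega}\|_2)$, so everything reduces to comparing $\widehat{\psi_{\tau\pm1}}(\delta\|\boldsymbol{\omega}\|_2)$ with $\widehat{\psi_{\tau\pm1}}(\|\boldsymbol{\omega}\|_2)$ pointwise in $\boldsymbol{\omega}$.

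For this last step I would invoke the two-sided Fourier bounds \eqref{eqnFourierUpsilon} and \eqref{eqnFourierPsi}, which reduce the comparison to the profile $(1+t^2)^{-\tau\mp1}$, together with the elementary inequalities
\[
\delta^2\left(1+\|\boldsymbol{\omega}\|_2^2\right) \le 1+\delta^2\|\boldsymbol{\omega}\|_2^2 \le 1+\|\boldsymbol{\omega}\|_2^2,
\]
valid for $0 < \delta \le 1$; we may assume $\delta_a \le 1$, since in the application $\delta \to 0$, and otherwise the constants merely pick up a bounded factor depending on $\delta_a$. Raising to the appropriate negative power, the right-hand inequality shows that $\widehat{\psi_{\tau+1,\delta}}(\|\boldsymbol{\omega}\|_2)$ is bounded below by a fixed multiple of $\widehat{\psi_{\tau+1}}(\|\boldsymbol{\omega}\|_2)$, with a constant independent of $\delta$, whence $\|\mathbf{f_u}\|_{\mathcal{N}_{\mathbf{\Psi}_{\tau+1,\delta}}(\mathbb{R}^d)} \le c_3^{-1}\|\mathbf{f_u}\|_{\mathcal{N}_{\mathbf{\Psi}_{\tau+1}}(\mathbb{R}^d)}$; the left-hand inequality gives $\widehat{\psi_{\tau+1,\delta}}(\|\boldsymbol{\omega}\|_2) \le C\,\delta^{-2(\tau+1)}\,\widehat{\psi_{\tau+1}}(\|\boldsymbol{\omega}\|_2)$, hence $\|\mathbf{f_u}\|_{\mathcal{N}_{\mathbf{\Psi}_{\tau+1}}(\mathbb{R}^d)} \le c_4\,\delta^{-\tau-1}\|\mathbf{f_u}\|_{\mathcal{N}_{\mathbf{\Psi}_{\tau+1,\delta}}(\mathbb{R}^d)}$. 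The pressure part is handled identically with $\tau+1$ replaced by $\tau-1$, producing a factor $\delta^{-(\tau-1)}$ (and, when $\tau<1$, even a $\delta$-independent constant); this also follows from Lemma \ref{lemPhiDeltaHtauNormEquiv} applied with the scalar kernel $\psi_{\tau-1}$ together with the equivalence of $\|\cdot\|_{\psi_{\tau-1}}$ with the $H^{\tau-1}(\mathbb{R}^d)$-norm.

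It remains to assemble the pieces. Since $\delta \le 1$ we have $\delta^{-(\tau-1)} \le \delta^{-(\tau+1)}$, so both components admit the common factor $\delta^{-\tau-1}$ in the upper bound; adding the squared velocity and pressure estimates and taking square roots yields
\[
c_3\|\mathbf{f}\|_{\mathcal{N}_{\mathbf{\Phi}_{\delta}}(\mathbb{R}^d)} \le \|\mathbf{f}\|_{\mathcal{N}_{\mathbf{\Phi}}(\mathbb{R}^d)} \le c_4\,\delta^{-\tau-1}\|\mathbf{f}\|_{\mathcal{N}_{\mathbf{\Phi}_{\delta}}(\mathbb{R}^d)}
\]
for suitably relabelled constants $c_3,c_4$. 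The equality of the two native spaces as sets is then immediate: the weighted integrands defining the two norms are pointwise comparable, up to powers of $\delta$, so one integral is finite precisely when the other is. I do not expect a genuine obstacle; the only points needing a little care are that the pressure exponent $\tau-1$ may be negative, in which case the relevant inequality runs the other way but only \emph{improves} the estimate, and the reconciliation of the two distinct powers $\tau\pm1$ into the single power $\tau+1$ claimed in the statement, which is exactly where $\delta \le 1$ is used.
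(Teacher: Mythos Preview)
Your proposal is correct and follows essentially the same approach as the paper: both exploit the product structure \eqref{eqnStokesPhiNorm}--\eqref{eqnStokesPhiDeltaNorm} to split into velocity and pressure components, compare each component via the Fourier bounds \eqref{eqnFourierUpsilon} and \eqref{eqnFourierPsi} (the paper cites \cite[Lemma~2.2]{CheL12} for this step, which is precisely your elementary inequality $\delta^2(1+\|\boldsymbol{\omega}\|_2^2)\le 1+\delta^2\|\boldsymbol{\omega}\|_2^2\le 1+\|\boldsymbol{\omega}\|_2^2$), and then take $c_3,c_4$ as the minimum and maximum of the resulting component constants. Your observation that the $\|\boldsymbol{\omega}\|_2^{-2}$ factor in the velocity norm is scale-independent and hence drops out of the comparison is exactly what makes the matrix-valued case reduce to the scalar one.
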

\begin{proof}
%This requires only a minor modification to the proof of \cite[Lemma 2.2 {CheL12}
%and we leave the details to the reader.
With $\mathbf{f} = (\mathbf{f}_{\mathbf{u}},f_p)^T$,
by using arguments similar to \cite[Lemma 2.2]{CheL12} we have
\[
c_5 \|f_p\|_{\mathcal{N}_{\psi_{\tau-1,\delta}}(\mathbb{R}^d)}
\le \|f_p\|_{\mathcal{N}_{\psi_{\tau-1}}(\mathbb{R}^d)}
\le c_6 \delta^{-\tau-1}
  \|f_p\|_{\mathcal{N}_{\psi_{\tau-1,\delta}}(\mathbb{R}^d)},
\]
where $c_5 := c_{1,\tau-1}\min(1,\delta^{-\tau-1})$ and
$c_6 := c_{2,\tau-1}$.
Similarly, we can show
\[
c_7\|\mathbf{f_u}\|_{\mathcal{N}_{\mathbf{\Psi}_{\tau+1,\delta}}(\mathbb{R}^d)}
\le \|\mathbf{f_u}\|_{\mathcal{N}_{\mathbf{\Psi}_{\tau+1}}(\mathbb{R}^d)}
\le c_8 \delta^{-\tau-1}\|\mathbf{f_u}\|_{\mathcal{N}_{\mathbf{\Psi}_{\tau+1,\delta}}(\mathbb{R}^d)},
\]
where $c_7 := c_{1,\tau+1}\min(1,\delta_a^{-\tau-1})$ and
$c_8 := c_{2,\tau+1}$.
With \eqref{eqnStokesPhiDeltaNorm} and
setting $c_3 := \min(c_5,c_7)$ and $c_4 := \max(c_6,c_8)$, we get the final
result.
\end{proof}

We require one further result from \cite{Tem77}.
\begin{thm}
Let $m \in \mathbb{N}_0$ and let $\Omega \subseteq \mathbb{R}^d$ be a $C^{m+1,1}$ smooth domain with outer normal vector $\mathbf{n}$. For each $\mathbf{f} \in \mathbf{H}^m(\Omega)$ and $\mathbf{g} \in \mathbf{H}^{m+3/2}(\partial \Omega)$ with $\int_{\partial \Omega} \mathbf{g} \cdot \mathbf{n} \, \mathrm{d}S = 0$, the nonhomogeneous Stokes problem \eqref{eqnStokes1}-\eqref{eqnStokes3} has a unique solution $\mathbf{u} \in \mathbf{H}^{m+2}(\Omega)$ and $p \in H^{m+1}(\Omega)$ and
\begin{equation}
\label{eqnTem}
\|\mathbf{u}\|_{\mathbf{H}^{m+2}(\Omega)} + \|p\|_{H^{m+1}(\Omega)/\mathbb{R}} \leq C \left(\|\mathbf{f}\|_{\mathbf{H}^m(\Omega)} + \|\mathbf{g}\|_{\mathbf{H}^{m+3/2}(\partial \Omega)}\right)
\end{equation}
\end{thm}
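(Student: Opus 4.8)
The plan is to follow the classical regularity theory for the stationary Stokes system, the details of which are in \cite{Tem77}; I outline the three stages and indicate where the difficulty lies. First I would reduce to homogeneous boundary data. By the trace theorem there is a bounded lifting producing $\mathbf{w}_0 \in \mathbf{H}^{m+2}(\Omega)$ with $\mathbf{w}_0|_{\partial\Omega} = \mathbf{g}$ and $\|\mathbf{w}_0\|_{\mathbf{H}^{m+2}(\Omega)} \le C\|\mathbf{g}\|_{\mathbf{H}^{m+3/2}(\partial\Omega)}$. Since $\int_\Omega \nabla\cdot\mathbf{w}_0 = \int_{\partial\Omega}\mathbf{g}\cdot\mathbf{n}\,\mathrm{d}S = 0$ by the compatibility hypothesis, the divergence equation $\nabla\cdot\mathbf{z} = -\nabla\cdot\mathbf{w}_0$ in $\Omega$, $\mathbf{z}|_{\partial\Omega}=0$, admits a solution $\mathbf{z}\in\mathbf{H}^{m+2}(\Omega)\cap\mathbf{H}^1_0(\Omega)$ depending boundedly on its data (for instance via the Bogovskii operator), so $\mathbf{u}_0 := \mathbf{w}_0 + \mathbf{z}$ is a divergence-free lift of $\mathbf{g}$ of the correct Sobolev order. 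Writing $\mathbf{u} = \mathbf{u}_0 + \tilde{\mathbf{u}}$, it then suffices to solve the Stokes problem for $(\tilde{\mathbf{u}},p)$ with right-hand side $\tilde{\mathbf{f}} := \mathbf{f} + \nu\Delta\mathbf{u}_0 \in \mathbf{H}^m(\Omega)$ and zero boundary value.

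Next I would establish weak solvability and recover the pressure. On the closed subspace $\mathbf{V} := \{\mathbf{v}\in\mathbf{H}^1_0(\Omega): \nabla\cdot\mathbf{v}=0\}$, the bilinear form $(\mathbf{u},\mathbf{v})\mapsto\nu\int_\Omega\nabla\mathbf{u}:\nabla\mathbf{v}$ is bounded and, by the Poincar\'{e} inequality, coercive, so the Lax--Milgram theorem yields a unique $\tilde{\mathbf{u}}\in\mathbf{V}$ with $\nu\int_\Omega\nabla\tilde{\mathbf{u}}:\nabla\mathbf{v} = \int_\Omega\tilde{\mathbf{f}}\cdot\mathbf{v}$ for all $\mathbf{v}\in\mathbf{V}$ and $\|\tilde{\mathbf{u}}\|_{\mathbf{H}^1(\Omega)}\le C\|\tilde{\mathbf{f}}\|_{\mathbf{H}^m(\Omega)}$. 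The functional $\mathbf{v}\mapsto\int_\Omega\tilde{\mathbf{f}}\cdot\mathbf{v} - \nu\int_\Omega\nabla\tilde{\mathbf{u}}:\nabla\mathbf{v}$ vanishes on $\mathbf{V}$, so by de Rham's theorem it equals $\mathbf{v}\mapsto-\int_\Omega p\,\nabla\cdot\mathbf{v}$ for some $p\in L_2(\Omega)/\mathbb{R}$, and the inf--sup (LBB) condition on $\Omega$ gives $\|p\|_{L_2(\Omega)/\mathbb{R}}\le C\|\tilde{\mathbf{f}}\|_{\mathbf{H}^m(\Omega)}$; uniqueness follows from coercivity together with the LBB condition.

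Finally I would bootstrap elliptic regularity to reach $\mathbf{H}^{m+2}(\Omega)\times H^{m+1}(\Omega)$ and read off \eqref{eqnTem}. Interior regularity comes from difference quotients in all coordinate directions, using that the velocity--pressure operator is elliptic in the Agmon--Douglis--Nirenberg sense; near the boundary one flattens $\partial\Omega$ with a $C^{m+1,1}$ chart, applies difference quotients in the $d-1$ tangential directions to bound every derivative except the purely normal ones, and then extracts the missing normal derivatives algebraically from the equations, e.g.\ $-\nu\,\partial_{nn}u_i$ from $-\nu\Delta u_i + \partial_i p = \tilde f_i$ and $\partial_n u_n$ from $\nabla\cdot\mathbf{u}=0$. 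Each pass gains one Sobolev index; after finitely many passes one obtains $\mathbf{u}\in\mathbf{H}^{m+2}(\Omega)$, $p\in H^{m+1}(\Omega)$, and carefully tracking the constants through the finitely many localizations yields an a priori bound in terms of $\|\tilde{\mathbf{f}}\|_{\mathbf{H}^m(\Omega)}$ and $\|\mathbf{u}_0\|_{\mathbf{H}^{m+2}(\Omega)}$, hence in terms of $\|\mathbf{f}\|_{\mathbf{H}^m(\Omega)}$ and $\|\mathbf{g}\|_{\mathbf{H}^{m+3/2}(\partial\Omega)}$.

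The hard part will be the boundary regularity in the last stage: unlike a scalar elliptic equation the Stokes system couples $\mathbf{u}$ with $p$, so one cannot simply differentiate the equation in the normal direction and must instead use the incompressibility constraint to trade a normal derivative of $u_n$ for tangential ones. Verifying the complementing (Lopatinskii--Shapiro) condition for the velocity--pressure system, so that the localized difference-quotient estimates are uniform and close up over the bootstrap, is the technical core of the argument; this is precisely the content established in \cite{Tem77}.
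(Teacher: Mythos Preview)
The paper does not prove this theorem; it is stated without proof and attributed to \cite{Tem77} (Temam), introduced with ``We require one further result from \cite{Tem77}.'' So there is no proof in the paper to compare against.

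Your outline is a faithful sketch of the classical argument that appears in Temam and in the ADN-style regularity literature: divergence-free lifting of the boundary data via the trace theorem and a Bogovskii-type right inverse of the divergence, Lax--Milgram on the divergence-free subspace followed by de~Rham/LBB to recover the pressure, and then tangential difference quotients near the flattened boundary with the normal derivatives read off from the equations and the incompressibility constraint. That is exactly the machinery the cited reference uses, and you have correctly identified the coupling between $\mathbf{u}$ and $p$ at the boundary (and the associated complementing condition) as the nontrivial step. For the purposes of this paper, simply citing \cite{Tem77} is what the authors do and is appropriate; your write-up would serve as a reasonable expanded remark if one wanted to indicate why the result holds.
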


\begin{thm}
\label{thmStokesL2}
Let $\tau > 2 + d/2$ with $d=2,3$. Assume that $\Omega \subseteq \mathbb{R}^d$ is a bounded, simply connected region with a $C^{\lceil \tau \rceil,1}$ boundary. Let $\mathbf{f} \in \mathbf{H}^{\tau-2}(\Omega)$ and $\mathbf{g} \in \mathbf{H}^{\tau-1/2}(\partial \Omega)$ satisfy $\int_{\partial \Omega} \mathbf{g} \cdot \mathbf{n} \, \mathrm{d} S = 0$. Suppose the kernel $\mathbf{\Phi}$ is chosen such that $\mathcal{N}_{\mathbf{\Phi}}(\mathbb{R}^d) = \widetilde{\mathbf{H}}^{\tau}(\mathbb{R}^d;\mathrm{div}) \times H^{\tau-1}(\mathbb{R}^d)$. Then the approximation $\mathbf{S}_X \mathbf{v}$ given by \eqref{eqnStokesApproxDefn} to the Stokes problem \eqref{eqnStokes1}-\eqref{eqnStokes3} satisfies the error bound
\begin{equation}
\|\mathbf{v} - \mathbf{S}_{X}\mathbf{v}\|_{\mathbf{L}_2(\Omega)} \leq C \, \bar{h}^{\tau -2} \|\mathbf{Ev} - \mathbf{S}_{X}\mathbf{Ev} \|_{\mathcal{N}_{\mathbf{\Phi}}(\mathbb{R}^d)},
\end{equation}
where $\bar{h} := \max(h_1,h_2)$ and the extension operator $\mathbf{E}$ is given by \eqref{eqnStokesExtension}.
\end{thm}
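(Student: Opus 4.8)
The plan is to exploit that the error $\mathbf{e}:=\mathbf{v}-\mathbf{S}_X\mathbf{v}=(\mathbf{e_u},e_p)$ is itself the solution of a Stokes problem whose interior and boundary data are precisely the collocation residuals, and then to convert the smallness of those residuals --- which vanish on the interior centres $X_1$, respectively on the boundary centres $X_2$ --- into an $\mathbf{L}_2(\Omega)$-bound by combining the a priori estimate \eqref{eqnTem} with the sampling inequalities \eqref{eqnSampling} and \eqref{eqnSamplingBdy}.

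First I would record the structural facts. Since the collocation functionals in \eqref{eqnColl1}--\eqref{eqnColl2} are point evaluations at centres contained in $\overline{\Omega}$ and $\mathbf{Ev}\vert_\Omega=\mathbf{v}$, the generalised interpolant of $\mathbf{Ev}$ obeys exactly the same interpolation conditions as $\mathbf{S}_X\mathbf{v}$, so $\mathbf{S}_X\mathbf{v}=\mathbf{S}_X\mathbf{Ev}$ and $\mathbf{e}=\mathbf{E}_e\vert_\Omega$ with $\mathbf{E}_e:=\mathbf{Ev}-\mathbf{S}_X\mathbf{Ev}$. The Stokes regularity behind \eqref{eqnTem} (in its fractional form) guarantees $\mathbf{u}\in\mathbf{H}^{\tau}(\Omega;\mathrm{div})$ and $p\in H^{\tau-1}(\Omega)$, so that $\mathbf{Ev}\in\mathcal{N}_{\mathbf{\Phi}}(\mathbb{R}^d)$ is well defined; since the $\mathbf{\Psi}_{\tau+1}$-block of \eqref{eqnDefnPhi} yields divergence-free interpolants, $\mathbf{e_u}\in\mathbf{H}^{\tau}(\Omega;\mathrm{div})$ and $e_p\in H^{\tau-1}(\Omega)$. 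Setting $\mathbf{R}_\Omega:=L\mathbf{S}_X\mathbf{v}-\mathbf{f}$ and $\mathbf{R}_{\partial\Omega}:=\mathbf{S}_X\mathbf{u}-\mathbf{g}$, subtraction of the equations shows $\mathbf{e}$ solves \eqref{eqnStokes1}--\eqref{eqnStokes3} with interior datum $-\mathbf{R}_\Omega$ and boundary datum $-\mathbf{R}_{\partial\Omega}$, and the compatibility condition is automatic: $\int_{\partial\Omega}\mathbf{R}_{\partial\Omega}\cdot\mathbf{n}\,\mathrm{d}S=-\int_{\partial\Omega}\mathbf{e_u}\cdot\mathbf{n}\,\mathrm{d}S=-\int_\Omega\nabla\cdot\mathbf{e_u}\,\mathrm{d}\mathbf{x}=0$. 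Moreover $\mathbf{R}_\Omega=-L\mathbf{e}\in\mathbf{H}^{\tau-2}(\Omega)$ vanishes on $X_1$ by \eqref{eqnColl1}, and $\mathbf{R}_{\partial\Omega}=-\mathbf{e_u}\vert_{\partial\Omega}$ vanishes on $X_2$ by \eqref{eqnColl2}.

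Next I would apply \eqref{eqnTem} with $m=0$ --- admissible since $\Omega$ is $C^{\lceil\tau\rceil,1}$, in particular $C^{1,1}$, and $\tau>2$ places $\mathbf{R}_\Omega$ in $\mathbf{L}_2(\Omega)$ and $\mathbf{R}_{\partial\Omega}$ in $\mathbf{H}^{3/2}(\partial\Omega)$ --- to obtain $\|\mathbf{e_u}\|_{\mathbf{H}^2(\Omega)}+\|e_p\|_{H^1(\Omega)/\mathbb{R}}\le C\big(\|\mathbf{R}_\Omega\|_{\mathbf{L}_2(\Omega)}+\|\mathbf{R}_{\partial\Omega}\|_{\mathbf{H}^{3/2}(\partial\Omega)}\big)$. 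To the first term I apply \eqref{eqnSampling} to $\mathbf{R}_\Omega\in\mathbf{H}^{\tau-2}(\Omega)$ with $\sigma=0$ --- this is exactly where the hypothesis $\tau-2>d/2$ is needed --- giving $\|\mathbf{R}_\Omega\|_{\mathbf{L}_2(\Omega)}\le Ch_1^{\tau-2}\|\mathbf{R}_\Omega\|_{\mathbf{H}^{\tau-2}(\Omega)}$; to the second I apply \eqref{eqnSamplingBdy} to $\mathbf{e_u}\in\mathbf{H}^{\tau}(\Omega)$ with $\sigma=3/2$ (legitimate since $3/2\le\tau-1/2$), giving $\|\mathbf{R}_{\partial\Omega}\|_{\mathbf{H}^{3/2}(\partial\Omega)}\le Ch_2^{\tau-2}\|\mathbf{e_u}\|_{\mathbf{H}^{\tau}(\Omega)}$. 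Both powers are $\tau-2$, so with $\bar h=\max(h_1,h_2)$ the right-hand side is $\le C\bar h^{\tau-2}\big(\|\mathbf{R}_\Omega\|_{\mathbf{H}^{\tau-2}(\Omega)}+\|\mathbf{e_u}\|_{\mathbf{H}^{\tau}(\Omega)}\big)$. Since $L$ is a constant-coefficient second-order operator, $\|\mathbf{R}_\Omega\|_{\mathbf{H}^{\tau-2}(\Omega)}\le C\big(\|\mathbf{e_u}\|_{\mathbf{H}^{\tau}(\Omega)}+\|e_p\|_{H^{\tau-1}(\Omega)}\big)$; restricting from $\mathbb{R}^d$ bounds these by $\|(\mathbf{E}_e)_{\mathbf{u}}\|_{\mathbf{H}^{\tau}(\mathbb{R}^d)}+\|(\mathbf{E}_e)_p\|_{H^{\tau-1}(\mathbb{R}^d)}$, and the Fourier characterisation \eqref{eqnHtauNorm} together with \eqref{eqnFourierUpsilon}--\eqref{eqnFourierPsi} and the elementary bound $\|\boldsymbol{\omega}\|_2^2\le 1+\|\boldsymbol{\omega}\|_2^2$ shows $\|\cdot\|_{\mathbf{H}^{\tau}(\mathbb{R}^d)}\le C\|\cdot\|_{\widetilde{\mathbf{H}}^{\tau}(\mathbb{R}^d;\mathrm{div})}$ on the velocity block, hence the whole expression is $\le C\|\mathbf{E}_e\|_{\mathcal{N}_{\mathbf{\Phi}}(\mathbb{R}^d)}$. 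Combining, $\|\mathbf{e_u}\|_{\mathbf{H}^2(\Omega)}+\|e_p\|_{H^1(\Omega)/\mathbb{R}}\le C\bar h^{\tau-2}\|\mathbf{E}_e\|_{\mathcal{N}_{\mathbf{\Phi}}(\mathbb{R}^d)}$, which a fortiori yields the claimed $\mathbf{L}_2(\Omega)$-bound.

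I expect the main obstacle to be the index bookkeeping in the step above: one must choose $m=0$ in \eqref{eqnTem} and then the matching differentiation orders $\sigma=0$ and $\sigma=3/2$ in \eqref{eqnSampling} and \eqref{eqnSamplingBdy} so that the residual $\mathbf{R}_\Omega$, which carries only $\mathbf{H}^{\tau-2}$ regularity, and the boundary trace of $\mathbf{e_u}$, which carries $\mathbf{H}^{\tau}$ regularity, both contribute the same power $\bar h^{\tau-2}$; it is also here that the hypothesis $\tau>2+d/2$ is forced, via the requirement $\tau-2>d/2$ in the interior sampling inequality. A secondary point to be dispatched is the pressure gauge: \eqref{eqnTem} controls $e_p$ only in $H^1(\Omega)/\mathbb{R}$, so the last inequality tacitly reads the pressure slot of $\|\mathbf{v}-\mathbf{S}_X\mathbf{v}\|_{\mathbf{L}_2(\Omega)}$ modulo constants (or normalises $p$ accordingly), consistently with the convention used throughout for the pressure.
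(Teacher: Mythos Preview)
Your proposal is correct and follows essentially the same route as the paper: apply the Stokes a priori estimate \eqref{eqnTem} with $m=0$ to the error $\mathbf{v}-\mathbf{S}_X\mathbf{v}$, then use the interior sampling inequality \eqref{eqnSampling} on $L\mathbf{e}$ and the boundary sampling inequality \eqref{eqnSamplingBdy} on $\mathbf{e_u}$, and finally bound the resulting high-order Sobolev norms by the native space norm via restriction and the Fourier characterisation. The paper is terser in two places --- it cites \cite[p.~3173]{Wen09} for the step from $\|L\mathbf{e}\|_{\mathbf{L}_2(\Omega)}$ directly to $h_1^{\tau-2}\|\mathbf{Ev}-\mathbf{S}_X\mathbf{Ev}\|_{\mathcal{N}_{\mathbf{\Phi}}(\mathbb{R}^d)}$ rather than passing explicitly through $\|L\mathbf{e}\|_{\mathbf{H}^{\tau-2}(\Omega)}$ as you do, and it handles the pressure gauge simply by declaring that the representer of $p$ is chosen so that $\|p\|_{H^1(\Omega)/\mathbb{R}}=\|p\|_{H^1(\Omega)}$ --- but these are presentational differences, not substantive ones.
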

\begin{proof}
With the definition of the Sobolev space norms in \eqref{defnVectorSobolevNorm} and assuming that we choose the representer for the pressure $p$ such that $\|p\|_{H^1(\Omega)/\mathbb{R}} = \|p\|_{H^1(\Omega)}$ gives
\begin{eqnarray}
\|\mathbf{v}-\mathbf{S}_{X}\mathbf{v}\|_{\mathbf{L}_2(\Omega)} &\leq& \|\mathbf{u}-\mathbf{S}_{X}\mathbf{u}\|_{\mathbf{L}_2(\Omega)} + \|p-S_{X}p\|_{L_2(\Omega)} \nonumber \\
&\leq& \|\mathbf{u}-\mathbf{S}_{X}\mathbf{u}\|_{\mathbf{H}^2(\Omega)} + \|p-S_{X}p\|_{H^1(\Omega)} \nonumber \\
&=& \|\mathbf{u}-\mathbf{S}_{X}\mathbf{u}\|_{\mathbf{H}^2(\Omega)} + \|p-S_{X}p\|_{H^1(\Omega)/\mathbb{R}} \nonumber \\
&\leq& C\|L\mathbf{v} - L\mathbf{S}_{X}\mathbf{v}\|_{\mathbf{L}_2(\Omega)} + \|\mathbf{u}-\mathbf{S}_{X}\mathbf{u}\|_{\mathbf{H}^{3/2}(\partial \Omega)}, \label{eqnL2int}
\end{eqnarray}
where the last line follows from \eqref{eqnTem} applied to $\mathbf{v}-\mathbf{S}_X\mathbf{v}$ with $m=0$. We now extend the function $\mathbf{v}$ to $\mathbf{Ev} \in \widetilde{\mathbf{H}}^{\tau}(\mathbb{R}^d) \times H^{\tau-1}(\mathbb{R}^d)$ and note that the generalised interpolant $\mathbf{S}_{X}\mathbf{v}$ coincides with $\mathbf{S}_{X}\mathbf{Ev}$.
We now consider the two terms in the right hand side of \eqref{eqnL2int} separately. From \eqref{eqnSampling} and with \cite[p.3173]{Wen09},
we have
\[
\|L\mathbf{v} - L\mathbf{S}_{X}\mathbf{v}\|_{\mathbf{L}_2(\Omega)} \leq C h_1^{\tau-2} \|\mathbf{Ev}-\mathbf{S}_{X}\mathbf{Ev}\|_{\mathcal{N}_{\mathbf{\Phi}}(\mathbb{R}^d)}.
\]
From \eqref{eqnSamplingBdy}, we have
\begin{equation}
\|\mathbf{u}-\mathbf{S}_{X}\mathbf{u}\|_{\mathbf{H}^{3/2}(\partial \Omega)} \leq C h_2^{\tau-2} \|\mathbf{u}-\mathbf{S}_{X}\mathbf{u}\|_{\mathbf{H}^{\tau}(\Omega)}. \label{eqnVelBoundaryError}
\end{equation}
Now we can write
\begin{eqnarray*}
\|\mathbf{u}-\mathbf{S}_{X}\mathbf{u}\|_{\mathbf{H}^{\tau}(\Omega)} &\leq& \|\mathbf{u} - \mathbf{S}_{X}\mathbf{u}\|_{\mathbf{H}^{\tau}(\Omega)} + \|p - S_{X}p\|_{H^{\tau-1}(\Omega)} \\
&\leq& \|\mathbf{\widetilde{E}}_{\scriptstyle \text{div} } \mathbf{u} - \mathbf{S}_{X}\mathbf{\widetilde{E}}_{\scriptstyle \text{div}} \mathbf{u}\|_{\widetilde{\mathbf{H}}^{\tau}(\mathbb{R}^d;{\scriptstyle \text{div}})} + \|E_Sp -  S_X E_{\scriptstyle S} p\|_{H^{\tau-1}(\mathbb{R}^d)} \\
&\leq& C \|\mathbf{Ev} - \mathbf{S}_X \mathbf{Ev} \|_{\mathcal{N}_{\mathbf{\Phi}}(\mathbb{R}^d)},
\end{eqnarray*}
and the stated result follows.
\end{proof}

\section{Multiscale symmetric collocation approximation} \label{SecnMultiSymmStokes}
We can now formally state our multiscale algorithm for the symmetric collocation solution of \eqref{eqnStokes1}-\eqref{eqnStokes3} which is stated as Algorithm \ref{AlgSymmStokes}. To simplify notation, we write $\mathbf{S}_i \mathbf{v} = \mathbf{S}_{X_i}\mathbf{v}$ and $\mathbf{\Phi}_i = \mathbf{\Phi}_{\delta_i}$ and denote the mesh norms for the interior and boundary collocation points at level $i$ as $h_{1,i}$ and $h_{2,i}$ respectively.
\begin{algorithm}[!htbp]
%\DontPrintSemicolon
\KwData{\hspace{0.45in} $n$: number of levels \\ \hspace{0.45in} $X_i:=\{X_{1,i},X_{2,i}\}_{i=1}^{n}$: the interior and boundary collocation \\ \hspace{0.45in}points for each level $i$, with mesh norms at each level given by\\ \hspace{0.45in} $\{h_{1,i}, h_{2,i}\}_{i=1}^n$ satisfying $c \mu \bar{h}_i \leq \bar{h}_{i+1} \leq \mu \bar{h}_i$, where \\ \hspace{0.45in} $\bar{h}_i := \max(h_{1,i},h_{2,i})$ with fixed $\mu \in (0,1), c \in (0,1]$ and \\ \hspace{0.45in} $\bar{h}_1$ sufficiently small \\ \hspace{0.45in} $\{\delta_i\}_{i=1}^n:$ the scale parameters to use at each level, satisfying\\ \hspace{0.45in} $\delta_i = \beta \bar{h}_i^{1-3/(\tau+1)}$, $\beta$ is a fixed constant. }

\Begin{
Set $\mathbf{M}_{0}\mathbf{v}=\mathbf{0}, \mathbf{f}_0 = \mathbf{f}, \mathbf{g}_0=\mathbf{g}$ \\
\For{$i =1,2,\ldots,n$}{
With the scaled kernel $\mathbf{\Phi}_{i}$, solve the symmetric collocation linear system
\begin{eqnarray*}
\left( L \mathbf{S}_{i}\mathbf{v}(\mathbf{x}) \right)_j &=& f_{i-1,j}(\mathbf{x}), \quad 1 \leq j \leq d, \,\, \mathbf{x}\in X_{1,i} \\
\left( \mathbf{S}_{i}\mathbf{v}(\mathbf{x}) \right)_j &=& g_{i-1,j}(\mathbf{x}), \quad 1 \leq j \leq d, \,\, \mathbf{x} \in X_{2,i}.
\end{eqnarray*}

Update the solution and residual according to
\begin{eqnarray*}
\mathbf{M}_{i}\mathbf{v} &=& \mathbf{M}_{i-1}\mathbf{v} + \mathbf{S}_{i}\mathbf{v} \\
\mathbf{f}_i &=& \mathbf{f}_{i-1} - L \mathbf{S}_{i}\mathbf{v}  \\
\mathbf{g}_i &=& \mathbf{g}_{i-1} - \mathbf{S}_{i}\mathbf{v}
\end{eqnarray*}
}
}
\KwResult{Approximate solution at level $n$, $\mathbf{M}_{n}\mathbf{v}$ \\ The error at level $n$, $\mathbf{e}_n := \mathbf{v} - \mathbf{M}_{n}\mathbf{v}$.}
\caption{Multiscale symmetric collocation approximation to the Stokes problem \label{AlgSymmStokes}}
\end{algorithm}

We require a technical lemma regarding the error in the estimation of the velocity $u$.
\begin{lem}
\label{lemTechErrBoundU}
Let $d=2,3$. Assume that $\mathbf{u} \in \mathbf{H}^{\tau}(\Omega; \mathrm{div})$ with $\tau>0$ and let $\widetilde{\mathbf{E}}_{\scriptstyle \mathrm{div}}$ be defined by \eqref{eqnEdivDefn} for $d=2,3$. Then we have the following bound
\begin{equation*}
\int\limits_{\mathbb{R}^d} \frac{\Bigl\| \widehat{\widetilde{\mathbf{E}}_{\scriptstyle \mathrm{div}} \mathbf{u}}(\boldsymbol{\omega}) \Bigr\|_2^2}{\|\boldsymbol{\omega}\|_2^2} \mathrm{d}\boldsymbol{\omega} \leq C \| \mathbf{u} \|_{L_2(\Omega)}^2.
\end{equation*}
\end{lem}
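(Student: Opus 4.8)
The plan is to exploit the fact that the weight $\|\boldsymbol{\omega}\|_2^{-2}$ in the integrand is exactly what is needed to absorb the derivative contained in the curl (resp.\ the two-dimensional $\mathrm{curl}$) that defines $\widetilde{\mathbf{E}}_{\scriptstyle \mathrm{div}}$; once it is absorbed, what remains is an $L_2(\mathbb{R}^d)$ norm of $E_S T\mathbf{u}$, which is controlled by Lemma \ref{lemExtensionOperator} and the boundedness of the operator $T$ recalled in Section \ref{SectionPreliminaries}.

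First I would pass to the Fourier side. Put $\mathbf{w} := E_S T\mathbf{u}$; since $T : \mathbf{H}^0(\Omega;\mathrm{div}) \to \mathbf{H}^1(\Omega)$ is bounded (the case $k=0$, $\theta=0$ of the statement recalled in Section \ref{SectionPreliminaries}) and $\Omega$ is bounded so that $\mathbf{u} \in \mathbf{H}^{\tau}(\Omega;\mathrm{div}) \subseteq \mathbf{H}^0(\Omega;\mathrm{div})$, we have $T\mathbf{u} \in \mathbf{H}^1(\Omega)$ and hence, by Lemma \ref{lemExtensionOperator}, $\mathbf{w} \in \mathbf{H}^1(\mathbb{R}^d)$, so the differentiation rules for the Fourier transform apply. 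For $d=3$, $\widetilde{\mathbf{E}}_{\scriptstyle \mathrm{div}}\mathbf{u} = \nabla \times \mathbf{w}$ gives $\widehat{\widetilde{\mathbf{E}}_{\scriptstyle \mathrm{div}}\mathbf{u}}(\boldsymbol{\omega}) = i\,\boldsymbol{\omega} \times \widehat{\mathbf{w}}(\boldsymbol{\omega})$, and the elementary bound $\|\boldsymbol{\omega} \times \mathbf{a}\|_2 \leq \|\boldsymbol{\omega}\|_2\,\|\mathbf{a}\|_2$, valid for every $\mathbf{a} \in \mathbb{C}^3$ (split $\mathbf{a}$ into real and imaginary parts), yields the pointwise estimate
\[
\frac{\bigl\|\widehat{\widetilde{\mathbf{E}}_{\scriptstyle \mathrm{div}}\mathbf{u}}(\boldsymbol{\omega})\bigr\|_2^2}{\|\boldsymbol{\omega}\|_2^2} \leq \bigl\|\widehat{\mathbf{w}}(\boldsymbol{\omega})\bigr\|_2^2 .
\]
For $d=2$, with $\psi := T\mathbf{u}$ and $\widetilde{\mathbf{E}}_{\scriptstyle \mathrm{div}}\mathbf{u} = \mathrm{curl}\,E_S\psi = (\partial_y E_S\psi,\,-\partial_x E_S\psi)$ one gets $\widehat{\widetilde{\mathbf{E}}_{\scriptstyle \mathrm{div}}\mathbf{u}}(\boldsymbol{\omega}) = i\,(\omega_2,-\omega_1)\,\widehat{E_S\psi}(\boldsymbol{\omega})$, so $\bigl\|\widehat{\widetilde{\mathbf{E}}_{\scriptstyle \mathrm{div}}\mathbf{u}}(\boldsymbol{\omega})\bigr\|_2 = \|\boldsymbol{\omega}\|_2\,\bigl|\widehat{E_S\psi}(\boldsymbol{\omega})\bigr|$ and the same pointwise estimate holds with $\mathbf{w} = E_S\psi$.

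Integrating this inequality over $\mathbb{R}^d$ and applying Plancherel's theorem turns the right-hand side into $\|E_S T\mathbf{u}\|_{L_2(\mathbb{R}^d)}^2$. Then Lemma \ref{lemExtensionOperator} with $\tau=0$ gives $\|E_S T\mathbf{u}\|_{L_2(\mathbb{R}^d)} \leq C\,\|T\mathbf{u}\|_{L_2(\Omega)}$, and the boundedness of $T$ already used above gives $\|T\mathbf{u}\|_{L_2(\Omega)} \leq \|T\mathbf{u}\|_{\mathbf{H}^1(\Omega)} \leq C\,\|\mathbf{u}\|_{\mathbf{L}_2(\Omega)}$; chaining these three estimates proves the claim. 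I do not foresee a genuine obstacle here: the only points requiring a little care are the bookkeeping between the $d=2$ and $d=3$ definitions of $\widetilde{\mathbf{E}}_{\scriptstyle \mathrm{div}}$ (both reduce to multiplication by a Fourier symbol of modulus $\|\boldsymbol{\omega}\|_2$) and confirming that the boundedness statement for $T$ is available at the endpoint regularity $\tau=0$, which is covered by the stated range $\tau=k+\theta$ with $k\in\mathbb{N}_0$, $\theta\in[0,1]$.
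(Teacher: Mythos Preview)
Your proposal is correct and follows essentially the same route as the paper: pass to the Fourier side, use that the curl contributes a factor of modulus $\|\boldsymbol{\omega}\|_2$ which cancels the weight, apply Plancherel, and then invoke the boundedness of $E_S$ and of $T:\mathbf{H}^0(\Omega;\mathrm{div})\to\mathbf{H}^1(\Omega)$. The only cosmetic difference is that the paper first embeds $L_2(\mathbb{R}^d)\hookrightarrow H^1(\mathbb{R}^d)$ and then applies the extension bound at regularity $1$, whereas you apply the extension bound at regularity $0$ and embed $L_2(\Omega)\hookrightarrow H^1(\Omega)$ afterwards; your explicit handling of the $d=2$ case is a small expository improvement over the paper, which only writes out $d=3$.
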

\begin{proof}
With the definitions of the $\widetilde{\mathbf{E}}_{\scriptstyle \mathrm{div}}$,$E_S$ and $T$ operators, we have
\begin{eqnarray*}
\int\limits_{\mathbb{R}^d} \frac{\Bigl\| \widehat{\widetilde{\mathbf{E}}_{\scriptstyle \text{div}} \mathbf{u}}(\boldsymbol{\omega}) \Bigr\|_2^2}{\|\boldsymbol{\omega}\|_2^2} \mathrm{d}\boldsymbol{\omega} &=& \int\limits_{\mathbb{R}^d} \frac{\Bigl\| \boldsymbol{\omega} \times  \widehat{E_S T \mathbf{u}}(\boldsymbol{\omega}) \Bigr\|_2^2}{\|\boldsymbol{\omega}\|_2^2} \mathrm{d}\boldsymbol{\omega} \\
&\leq& C\int\limits_{\mathbb{R}^d} \Bigl\| \widehat{E_S T \mathbf{u}}(\boldsymbol{\omega}) \Bigr\|_2^2 \mathrm{d}\boldsymbol{\omega} \\
&=& C \|E_S T \mathbf{u}\|^2_{L_2(\mathbb{R}^d)} \\
&\leq& C \|E_S T \mathbf{u}\|^2_{H^1(\mathbb{R}^d)} \\
&\leq& C \|T \mathbf{u}\|^2_{H^1(\Omega)} \\
&\leq& C \|\mathbf{u}\|^2_{L_2(\Omega)},
\end{eqnarray*}
where we have also used that the $E_S$ and $T$ operators are bounded (Lemma \ref{lemExtensionOperator}).
\end{proof}

The following theorem and corollary are our main results on the convergence of the multiscale symmetric collocation algorithm for solving the Stokes problem.
\begin{thm}
\label{thmStokesSubsErrors}
Assume that $\Omega$ and $\mathbf{f},\mathbf{g}$ satisfy the smoothness assumptions of Theorem \ref{thmStokesL2}  for $d=2,3$. Suppose the kernel $\mathbf{\Phi}$ is chosen such that $\mathcal{N}_{\mathbf{\Phi}}(\mathbb{R}^d) = \widetilde{\mathbf{H}}^{\tau}(\mathbb{R}^d;\mathrm{div}) \times H^{\tau-1}(\mathbb{R}^d)$ with $\tau > 0$ and define the scaled kernels by \eqref{eqnDefnPhiDelta} with scale factor $\delta_j$. Then for Algorithm \ref{AlgSymmStokes} there exists a constant $\alpha_1$ such that
\begin{equation}
\|\mathbf{Ee}_j\|_{\mathcal{N}_{\boldsymbol{\Phi}_{j+1}}(\mathbb{R}^d)} \leq \alpha_1 \|\mathbf{Ee}_{j-1}\|_{\mathcal{N}_{\boldsymbol{\Phi}_{j}}(\mathbb{R}^d)},
\end{equation}
where $\alpha_1$ is a constant independent of the point sets $X_1,X_2,\ldots$ and $\mathbf{Ee}_j$ is the extension operator for $\mathbf{v}$ defined in \eqref{eqnStokesExtension} applied to the error at level $j$ defined in Algorithm \ref{AlgSymmStokes}.
\end{thm}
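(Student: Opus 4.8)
The plan is to realise $\mathbf S_j\mathbf v$ as the generalised interpolant of the previous error, to decompose the scaled native‑space norm on the left into an $\mathbf L_2$‑piece and pieces carrying explicit powers of $\delta_{j+1}$, and then to balance these against powers of $\delta_j$ produced on the right; the scale rule $\delta_j=\beta\bar h_j^{1-3/(\tau+1)}$ together with $\bar h_{j+1}\le\mu\bar h_j$ is exactly what makes the balance close with a constant.

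First I would record the bookkeeping. Since $\mathbf f_i=\mathbf f_{i-1}-L\mathbf S_i\mathbf v$ and $\mathbf g_i=\mathbf g_{i-1}-\mathbf S_i\mathbf v$, one gets $\mathbf f_{j-1}=L\mathbf e_{j-1}$ in $\Omega$ and $\mathbf g_{j-1}=\mathbf e_{j-1}$ (the velocity components) on $\partial\Omega$; thus the level‑$j$ collocation data are precisely the data of $\mathbf e_{j-1}$, equivalently of $\mathbf E\mathbf e_{j-1}$, so $\mathbf S_j\mathbf v=\mathbf S_j\mathbf E\mathbf e_{j-1}$ as functions on $\mathbb R^d$ (as in Theorem \ref{thmStokesL2}). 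Hence $\mathbf w_j:=\mathbf E\mathbf e_{j-1}-\mathbf S_j\mathbf E\mathbf e_{j-1}$ restricts to $\mathbf e_{j-1}-\mathbf S_j\mathbf v=\mathbf e_j$ on $\Omega$, and by minimality of the generalised interpolant in the scaled native space $\|\mathbf w_j\|_{\mathcal N_{\boldsymbol\Phi_j}(\mathbb R^d)}\le\|\mathbf E\mathbf e_{j-1}\|_{\mathcal N_{\boldsymbol\Phi_j}(\mathbb R^d)}$. I also note that the velocity error $\mathbf e_j^{(u)}$ is divergence free (the velocity interpolants are), that $\mathbf e_j^{(u)}=\mathbf w_{j,u}|_{\Omega}$ and $e_j^{(p)}=w_{j,p}|_{\Omega}$, so Sobolev norms of $\mathbf e_j$ on $\Omega$ are dominated by the corresponding norms of $\mathbf w_j$ on $\mathbb R^d$, and that the residual Stokes problem solved at level $j$ has exact solution $\mathbf e_{j-1}$, with boundary data satisfying $\int_{\partial\Omega}\mathbf e_{j-1}^{(u)}\cdot\mathbf n\,\mathrm dS=\int_\Omega\nabla\cdot\mathbf e_{j-1}^{(u)}\,\mathrm dx=0$.

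The crux is the decomposition of $\|\mathbf E\mathbf e_j\|_{\mathcal N_{\boldsymbol\Phi_{j+1}}(\mathbb R^d)}$. Starting from \eqref{eqnStokesPhiDeltaNorm} with $\delta=\delta_{j+1}$, using the scaled analogues of \eqref{eqnFourierUpsilon}--\eqref{eqnFourierPsi} (namely $c_{1,\tau\pm1}(1+\delta^2\|\boldsymbol\omega\|_2^2)^{-\tau\mp1}\le\widehat{\psi_{\tau\pm1,\delta}}(\boldsymbol\omega)$, obtained by scaling) and the elementary bound $(1+\delta^2\|\boldsymbol\omega\|_2^2)^s\le 2^s\bigl(1+\delta^{2s}\|\boldsymbol\omega\|_2^{2s}\bigr)$ for $s\ge0$, I would split the velocity integral into a term with weight $\|\boldsymbol\omega\|_2^{-2}$ and a term with weight $\delta_{j+1}^{2\tau+2}\|\boldsymbol\omega\|_2^{2\tau}$, and the pressure integral into a term with weight $1$ and a term with weight $\delta_{j+1}^{2\tau-2}\|\boldsymbol\omega\|_2^{2\tau-2}$. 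Lemma \ref{lemTechErrBoundU} bounds the first velocity term by $C\|\mathbf e_j^{(u)}\|_{L_2(\Omega)}^2$; boundedness of $E_S$ on $L_2$ (Lemma \ref{lemExtensionOperator} with $\tau=0$) bounds the first pressure term by $C\|e_j^{(p)}\|_{L_2(\Omega)}^2$; and continuity of $\widetilde{\mathbf E}_{\text{div}}$ and $E_S$ (together with $\int\|\widehat{\mathbf g}\|_2^2\|\boldsymbol\omega\|_2^{2\tau}\,\mathrm d\boldsymbol\omega\le C\|\mathbf g\|_{\widetilde{\mathbf H}^\tau(\mathbb R^d;\text{div})}^2$ and $\int|\widehat h|^2\|\boldsymbol\omega\|_2^{2\tau-2}\,\mathrm d\boldsymbol\omega\le C\|h\|_{H^{\tau-1}(\mathbb R^d)}^2$) bound the second terms by $C\delta_{j+1}^{2\tau+2}\|\mathbf e_j^{(u)}\|_{\mathbf H^\tau(\Omega;\text{div})}^2$ and $C\delta_{j+1}^{2\tau-2}\|e_j^{(p)}\|_{H^{\tau-1}(\Omega)}^2$. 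Taking square roots,
\[
\|\mathbf E\mathbf e_j\|_{\mathcal N_{\boldsymbol\Phi_{j+1}}(\mathbb R^d)}\le C\Bigl(\|\mathbf e_j\|_{\mathbf L_2(\Omega)}+\delta_{j+1}^{\tau+1}\|\mathbf e_j^{(u)}\|_{\mathbf H^\tau(\Omega;\text{div})}+\delta_{j+1}^{\tau-1}\|e_j^{(p)}\|_{H^{\tau-1}(\Omega)}\Bigr).
\]
For the first term I would invoke the single‑scale estimate: the proof of Theorem \ref{thmStokesL2} carries over verbatim to the scaled kernel $\boldsymbol\Phi_j$, the only change being the extra factor $\delta_j^{-\tau-1}$ from Lemma \ref{lemStokesNormEquiv} when Sobolev norms of the error are replaced by the scaled native norm, so applied to the level‑$j$ residual problem it gives $\|\mathbf e_j\|_{\mathbf L_2(\Omega)}\le C\bar h_j^{\tau-2}\delta_j^{-\tau-1}\|\mathbf w_j\|_{\mathcal N_{\boldsymbol\Phi_j}(\mathbb R^d)}\le C\beta^{-\tau-1}\|\mathbf E\mathbf e_{j-1}\|_{\mathcal N_{\boldsymbol\Phi_j}(\mathbb R^d)}$, since $\bar h_j^{\tau-2}\delta_j^{-\tau-1}=\beta^{-\tau-1}$ by the choice of $\delta_j$. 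For the other two I would use $\|\mathbf e_j^{(u)}\|_{\mathbf H^\tau(\Omega;\text{div})}\le\|\mathbf w_{j,u}\|_{\widetilde{\mathbf H}^\tau(\mathbb R^d;\text{div})}$ and $\|e_j^{(p)}\|_{H^{\tau-1}(\Omega)}\le\|w_{j,p}\|_{H^{\tau-1}(\mathbb R^d)}$ (restriction does not increase the norm, and $\mathbf H^\tau(\mathbb R^d)\hookrightarrow$ is dominated by $\widetilde{\mathbf H}^\tau(\mathbb R^d;\text{div})$), then Lemma \ref{lemPhiDeltaHtauNormEquiv} componentwise together with minimality: $\|\mathbf w_{j,u}\|_{\widetilde{\mathbf H}^\tau}\le C\delta_j^{-\tau-1}\|\mathbf E\mathbf e_{j-1}\|_{\mathcal N_{\boldsymbol\Phi_j}}$ and $\|w_{j,p}\|_{H^{\tau-1}}\le C\delta_j^{-\tau+1}\|\mathbf E\mathbf e_{j-1}\|_{\mathcal N_{\boldsymbol\Phi_j}}$. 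Since $\delta_{j+1}/\delta_j=(\bar h_{j+1}/\bar h_j)^{1-3/(\tau+1)}\le\mu^{1-3/(\tau+1)}$, the prefactors are $\delta_{j+1}^{\tau+1}\delta_j^{-\tau-1}\le\mu^{\tau-2}$ and $\delta_{j+1}^{\tau-1}\delta_j^{-\tau+1}\le\mu^{(\tau-1)(1-3/(\tau+1))}$, both at most $1$; hence $\alpha_1:=C\bigl(\beta^{-\tau-1}+\mu^{\tau-2}+\mu^{(\tau-1)(1-3/(\tau+1))}\bigr)$, which depends only on $d,\tau,\nu,\Omega$ and the algorithm parameters $\beta,\mu$.

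I expect the main obstacle to be finding the decomposition in the third paragraph: the genuine $\mathbf H^\tau$/$H^{\tau-1}$ content of $\mathbf E\mathbf e_j$ does not decay from level to level, so it must be made to appear only against the small factors $\delta_{j+1}^{\tau\pm1}$, while the part that does decay is isolated at the $L_2$ level where the sampling inequalities inside Theorem \ref{thmStokesL2} supply the compensating factor $\bar h_j^{\tau-2}$; Lemma \ref{lemTechErrBoundU} is precisely the device that lets the divergence‑free velocity extension be estimated at the $L_2$ level. Everything else is a routine combination of Lemmas \ref{lemStokesNormEquiv}, \ref{lemPhiDeltaHtauNormEquiv} and interpolant minimality with the arithmetic of the scale rule.
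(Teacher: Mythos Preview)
Your argument is correct and follows the same architecture as the paper's proof: isolate an $\mathbf L_2$ piece of $\|\mathbf E\mathbf e_j\|_{\mathcal N_{\boldsymbol\Phi_{j+1}}}$ where Lemma~\ref{lemTechErrBoundU} and Theorem~\ref{thmStokesL2} produce the factor $\bar h_j^{\tau-2}$ (balanced against $\delta_j^{-\tau-1}$ to give $\beta^{-\tau-1}$), and a high-order piece controlled by the ratio $\delta_{j+1}/\delta_j$ and interpolant minimality. The difference is purely in how the split is made: the paper cuts the Fourier integral at $\|\boldsymbol\omega\|_2=1/\delta_{j+1}$ into a low-frequency part $I_1$ and a high-frequency part $I_2$, and on $I_2$ compares the weights $(1+\delta_{j+1}^2\|\boldsymbol\omega\|_2^2)^{\tau\pm1}$ directly with $(1+\delta_j^2\|\boldsymbol\omega\|_2^2)^{\tau\pm1}$ to land back in the $\mathcal N_{\boldsymbol\Phi_j}$ norm of $\mathbf E\mathbf e_j$; you instead use the algebraic inequality $(1+x)^s\le 2^s(1+x^s)$, which sends the high-order piece to genuine Sobolev norms of $\mathbf e_j$ on $\Omega$ and thence, via restriction to $\mathbf w_j$, to $\|\mathbf E\mathbf e_{j-1}\|_{\mathcal N_{\boldsymbol\Phi_j}}$. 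Your route produces three terms rather than two and slightly different powers of $\mu$ ($\mu^{\tau-2}$ and $\mu^{(\tau-1)(\tau-2)/(\tau+1)}$ versus the paper's $\mu^{\tau-1}$ and $\mu^{\tau-3}$), but the mechanism, and the ingredients (Lemmas~\ref{lemTechErrBoundU}, \ref{lemStokesNormEquiv}, Theorem~\ref{thmStokesL2}, minimality), are the same. A small advantage of your version is that the passage from the high-order piece to $\|\mathbf E\mathbf e_{j-1}\|_{\mathcal N_{\boldsymbol\Phi_j}}$ is made entirely explicit through $\mathbf w_j$, whereas the paper's ``last step follows in the same way as for $I_1$'' leaves that passage to the reader.
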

\begin{proof}
With the notation $\mathbf{Ee_j} = (\widetilde{\mathbf{E}}_{\scriptstyle \text{div}}\mathbf{u}-\mathbf{M}_j\widetilde{\mathbf{E}}_{\scriptstyle \text{div}}\mathbf{u},E_S p - M_j E_{\scriptstyle \text{S}}p)^T = (\widetilde{\mathbf{E}}_{\scriptstyle \text{div}}\mathbf{e}_{\mathbf{u},j},E_{\scriptstyle S}e_{p,j})^T$ and with \eqref{eqnStokesPhiDeltaNorm}, we have
\begin{eqnarray*}
\|\mathbf{Ee}_j\|_{\mathcal{N}_{\mathbf{\Phi}_{j+1}}(\mathbb{R}^d)}^2 &\leq& \bar{C}_1\int\limits_{\mathbb{R}^d} \Bigg[\frac{\Bigl\| \widehat{\widetilde{\mathbf{E}}_{\scriptstyle \text{div}} \mathbf{e}_{\mathbf{u},j}}(\boldsymbol{\omega}) \Bigr\|_2^2}{\|\boldsymbol{\omega}\|_2^2} \left(1+\delta_{j+1}^2 \|\boldsymbol{\omega}\|_2^2\right)^{\tau+1} \Bigg. \\
 && \Bigg. +   \quad \left\vert \widehat{E_S e_{p,j}}(\boldsymbol{\omega}) \right\vert^2 \left(1+\delta_{j+1}^2 \|\boldsymbol{\omega}\|_2^2\right)^{\tau-1} \Bigg] \mathrm{d}\boldsymbol{\omega} \nonumber \\
&=:& I_1 + I_2,
\end{eqnarray*}
with
\begin{eqnarray*}
I_1 &:=& \int\limits_{\|\boldsymbol{\omega}\|_2 \leq \frac{1}{\delta_{j+1}}} \Bigg[\frac{\Bigl\| \widehat{\widetilde{\mathbf{E}}_{\scriptstyle \text{div}} \mathbf{e}_{\mathbf{u},j}}(\boldsymbol{\omega}) \Bigr\|_2^2}{\|\boldsymbol{\omega}\|_2^2} \left(1+\delta_{j+1}^2 \|\boldsymbol{\omega}\|_2^2\right)^{\tau+1} + \widehat{E_S e_{p,j}}(\boldsymbol{\omega}) \left(1+\delta_{j+1}^2 \|\boldsymbol{\omega}\|_2^2\right)^{\tau-1} \Bigg] \mathrm{d}\boldsymbol{\omega},  \\
I_2 &:=& \int\limits_{\|\boldsymbol{\omega}\|_2 \geq \frac{1}{\delta_{j+1}}} \Bigg[\frac{\Bigl\| \widehat{\widetilde{\mathbf{E}}_{\scriptstyle \text{div}} \mathbf{e}_{\mathbf{u},j}}(\boldsymbol{\omega}) \Bigr\|_2^2}{\|\boldsymbol{\omega}\|_2^2} \left(1+\delta_{j+1}^2 \|\boldsymbol{\omega}\|_2^2\right)^{\tau+1} + \widehat{E_S e_{p,j}}(\boldsymbol{\omega}) \left(1+\delta_{j+1}^2 \|\boldsymbol{\omega}\|_2^2\right)^{\tau-1} \Bigg] \mathrm{d}\boldsymbol{\omega}.  \\
\end{eqnarray*}
For $I_1$, we can use that $\delta_{j+1} \|\boldsymbol{\omega}\|_2 \leq 1$, Lemma \ref{lemTechErrBoundU}, Theorem \ref{thmStokesL2} and Lemma \ref{lemStokesNormEquiv} to yield
\begin{eqnarray*}
I_1 &\leq& C \left( \widetilde{\mathbf{E}}_{\scriptstyle \text{div}} \|\mathbf{e}_{\mathbf{u},j}\|^2_{\mathbf{L}_2(\mathbb{R}^d)} + \|E_S e_{p,j}\|^2_{L_2(\mathbb{R}^d)} \right) \\
&\leq& C \left( \|\mathbf{e}_{\mathbf{u},j}\|^2_{\mathbf{L}_2(\Omega)} + \|e_{p,j}\|^2_{L_2(\Omega)} \right) \\
&\leq& C \bar{h}_j^{2\tau-4} \|\mathbf{Ee}_j\|_{\mathcal{N}_{\mathbf{\Phi}}(\mathbb{R}^d)}^2 \\
&\leq& C \frac{\bar{h}_j^{2\tau-4}}{\delta_j^{2\tau+2}} \|\mathbf{Ee}_{j-1}\|_{\mathcal{N}_{\mathbf{\Phi}_j}(\mathbb{R}^d)}^2 \\
&=& C_1 \beta^{-2\tau-2} \|\mathbf{Ee}_{j-1}\|_{\mathcal{N}_{\mathbf{\Phi}_j}(\mathbb{R}^d)}^2,
\end{eqnarray*}
where in the second last step we have used that since the interpolant at $X_j$  to $\mathbf{e}_{j-1}$ is the same as
the interpolant to $\mathbf{E e}_{j-1}$ (both functions take the same values
on $X_j \subseteq \Omega$), we have
\begin{align*}
 \|\mathbf{e}_j\|_{H^\tau(\Omega)} &= \|\mathbf{e}_{j-1} - \mathbf{S}_{j} \mathbf{e}_{j-1}\|_{\mathbf{H}^\tau(\Omega)} \\
   &= \| \mathbf{E e}_{j-1} - \mathbf{S}_{j} \mathbf{E e}_{j-1}\|_{\mathbf{H}^\tau(\Omega)} \\
   &\le \| \mathbf{E e}_{j-1} - \mathbf{S}_{j} \mathbf{E e}_{j-1}\|_{\mathbf{H}^\tau(\mathbb{R}^d)} \\
   &\le C\delta_j^{-\tau-1} \| \mathbf{E e}_{j-1} - \mathbf{S}_{j} \mathbf{E e}_{j-1}\|_{\mathcal{N}_{\mathbf{\Phi}_j}(\mathbb{R}^d)}  \\
   &\le C \delta_j^{-\tau-1} \| \mathbf{E e}_{j-1} \|_{\mathcal{N}_{\mathbf{\Phi}_j}(\mathbb{R}^d)}.
\end{align*}

For $I_2$, since $\delta_{j+1} \|\boldsymbol{\omega}\|_2 \geq 1$, we have
\[
\left(1 + \delta^2_{j+1} \|\boldsymbol{\omega}\|_2^2 \right)^{\tau-1}  \leq \left(2 \delta_{j+1}^2 \|\boldsymbol{\omega}\|_2^2 \right)^{\tau-1} \leq 2^{\tau-1} \mu^{\tau-3} \left(1 + \delta^2_{j} \|\boldsymbol{\omega}\|_2^2 \right)^{\tau-1},
\]
since if $\mu,\delta \leq 1$, we have
\begin{equation*}
\delta_{j+1}^{2\tau-2} \leq \mu^{\frac{(2\tau-2)(\tau-3)}{\tau}} \delta_j^{2\tau-2} \leq \mu^{\tau-3} \delta_j^{2\tau-2}.
\end{equation*}
Similarly, we have
\[
\left(1 + \delta^2_{j+1} \|\boldsymbol{\omega}\|_2^2 \right)^{\tau+1}  \leq 2^{\tau+1} \mu^{\tau-1} \left(1 + \delta^2_{j} \|\boldsymbol{\omega}\|_2^2 \right)^{\tau+1}.
\]
Hence
\begin{eqnarray*}
I_2 &\leq& C \mu^{\tau-3} \|\mathbf{Ee}_j\|_{\mathcal{N}_{\mathbf{\Phi}_j}(\mathbb{R}^d)}^2 \\
&\leq& C_2 \mu^{\tau-3} \|\mathbf{Ee}_{j-1}\|^2_{\mathcal{N}_{\mathbf{\Phi}_j}(\mathbb{R}^d)},
\end{eqnarray*}
where the last step follows in the same way as the last part of the derivation for $I_1$. The result follows with
\[
\alpha_1 := \left(C_1 \beta^{-2\tau-2} + C_2 \mu^{\tau-3} \right)^{1/2}.
\]
\end{proof}
\begin{cor}
There exist positive constants $C_3$ and $C_4$ such that
\[
\|\mathbf{v}-\mathbf{M}_n \mathbf{v}\|_{\mathbf{L}_2(\Omega)} \leq C_3 \alpha_1^n \left( \|\mathbf{u}\|_{\mathbf{H}^{\tau}(\Omega)} + \|p\|_{H^{\tau-1}(\Omega)} \right) \quad \mbox{ for } \,\, n=1,2,\ldots
\]
and
\[
\|\mathbf{u}-\mathbf{M}_n \mathbf{u}\|_{\mathbf{L}_2(\partial \Omega)} \leq C_4 \alpha_1^n \left( \|\mathbf{u}\|_{\mathbf{H}^{\tau}(\Omega)} + \|p\|_{H^{\tau-1}(\Omega)} \right) \quad \mbox{ for } \,\, n=1,2,\ldots
\]
Thus the multiscale approximation $\mathbf{M}_n \mathbf{v} $ resulting from Algorithm \ref{AlgSymmStokes} converges linearly to $\mathbf{v}$ in the $L_2-$norm in $\Omega$ and on $\partial \Omega$ if $\alpha_1 < 1$.
\end{cor}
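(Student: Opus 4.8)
The plan is to prove a single per-level estimate that converts a native-space bound on the extended error into an $L_2(\Omega)$ (respectively $L_2(\partial\Omega)$) bound, then to telescope with Theorem \ref{thmStokesSubsErrors}, and finally to bound the initial term by the data norms.

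First I would note that, since $\mathbf{M}_0\mathbf{v}=\mathbf{0}$, the level-$j$ error $\mathbf{e}_j=\mathbf{v}-\mathbf{M}_j\mathbf{v}$ from Algorithm \ref{AlgSymmStokes} satisfies $\mathbf{e}_j=\mathbf{e}_{j-1}-\mathbf{S}_j\mathbf{e}_{j-1}$, where $\mathbf{S}_j$ is the scaled symmetric collocation operator at level $j$; in particular $L\mathbf{e}_n$ vanishes on $X_{1,n}$, the divergence-free velocity component $\mathbf{e}_{\mathbf{u},n}$ vanishes on $X_{2,n}$, and $\mathbf{e}_{n-1}\in\mathbf{H}^{\tau}(\Omega;\mathrm{div})\times H^{\tau-1}(\Omega)$, so the argument of Theorem \ref{thmStokesL2} applies to $\mathbf{e}_n$. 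Running that argument --- the a priori estimate \eqref{eqnTem} with $m=0$ applied to $\mathbf{e}_n$, followed by \eqref{eqnSampling} (with $\sigma=0$ and exponent $\tau-2$, applied to $L\mathbf{e}_n$) and \eqref{eqnSamplingBdy} (with $\sigma=3/2$, applied to $\mathbf{e}_{\mathbf{u},n}$) --- gives
\[
\|\mathbf{e}_n\|_{\mathbf{L}_2(\Omega)}\leq C\,\bar h_n^{\tau-2}\bigl(\|\mathbf{e}_{\mathbf{u},n}\|_{\mathbf{H}^{\tau}(\Omega)}+\|e_{p,n}\|_{H^{\tau-1}(\Omega)}\bigr),
\]
and the chain of inequalities established inside the proof of Theorem \ref{thmStokesSubsErrors} (via Lemma \ref{lemStokesNormEquiv} and the generalised interpolant inequality) bounds the parenthesised factor by $C\delta_n^{-\tau-1}\|\mathbf{Ee}_{n-1}\|_{\mathcal{N}_{\boldsymbol{\Phi}_n}(\mathbb{R}^d)}$. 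Since $\delta_n=\beta\bar h_n^{1-3/(\tau+1)}$ one has $\bar h_n^{\tau-2}\delta_n^{-\tau-1}=\beta^{-\tau-1}$, hence $\|\mathbf{e}_n\|_{\mathbf{L}_2(\Omega)}\leq C\beta^{-\tau-1}\|\mathbf{Ee}_{n-1}\|_{\mathcal{N}_{\boldsymbol{\Phi}_n}(\mathbb{R}^d)}$; the boundary version follows in the same way from \eqref{eqnSamplingBdy} with $\sigma=0$, the only change being an extra factor $\bar h_n^{3/2}$, which is bounded by the constant $\bar h_1^{3/2}$.

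Next I would iterate Theorem \ref{thmStokesSubsErrors} with $j=n-1,n-2,\ldots,1$ to get $\|\mathbf{Ee}_{n-1}\|_{\mathcal{N}_{\boldsymbol{\Phi}_n}(\mathbb{R}^d)}\leq\alpha_1^{n-1}\|\mathbf{Ee}_0\|_{\mathcal{N}_{\boldsymbol{\Phi}_1}(\mathbb{R}^d)}$, and use $\mathbf{e}_0=\mathbf{v}$. By Lemma \ref{lemStokesNormEquiv} (valid since $\delta_1\leq\delta_a$ when $\bar h_1$ is small enough) we have $\|\mathbf{Ev}\|_{\mathcal{N}_{\boldsymbol{\Phi}_1}(\mathbb{R}^d)}\leq c_3^{-1}\|\mathbf{Ev}\|_{\mathcal{N}_{\boldsymbol{\Phi}}(\mathbb{R}^d)}$, and boundedness of the extension operators $\widetilde{\mathbf{E}}_{\mathrm{div}}$ and $E_S$, combined with \eqref{eqnStokesPhiNorm}, bounds this by $C(\|\mathbf{u}\|_{\mathbf{H}^{\tau}(\Omega)}+\|p\|_{H^{\tau-1}(\Omega)})$, using $\nabla\cdot\mathbf{u}=0$. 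Absorbing the fixed constants --- including $\alpha_1^{-1}$, which converts $\alpha_1^{n-1}$ into a constant multiple of $\alpha_1^n$ --- into $C_3$ and $C_4$ yields both displayed bounds, and $\alpha_1^n\to0$ for $\alpha_1<1$ gives the claimed linear convergence; the boundary statement is obtained by the identical telescoping applied to its per-level estimate.

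The telescoping is routine; the only delicate point is the per-level estimate for the \emph{scaled} operator $\mathbf{S}_n$, since Theorem \ref{thmStokesL2} is stated only for the unscaled kernel. One must re-trace its proof replacing the step $\|\mathbf{u}-\mathbf{S}_X\mathbf{u}\|_{\mathbf{H}^{\tau}(\Omega)}\leq C\|\mathbf{Ev}-\mathbf{S}_X\mathbf{Ev}\|_{\mathcal{N}_{\boldsymbol{\Phi}}(\mathbb{R}^d)}$ by the scaled counterpart $\|\mathbf{e}_n\|_{\mathbf{H}^{\tau}(\Omega)}\leq C\delta_n^{-\tau-1}\|\mathbf{Ee}_{n-1}\|_{\mathcal{N}_{\boldsymbol{\Phi}_n}(\mathbb{R}^d)}$ already proved within Theorem \ref{thmStokesSubsErrors}, and then checking that the resulting powers of $\bar h_n$ and $\delta_n$ cancel by the prescribed relation $\delta_n=\beta\bar h_n^{1-3/(\tau+1)}$ --- the same cancellation that appears in the ``$I_1$'' estimate of that proof.
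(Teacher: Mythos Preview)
Your proposal is correct and follows essentially the same route as the paper: apply the argument of Theorem~\ref{thmStokesL2} to $\mathbf{e}_n$, convert the Sobolev-norm factor to a scaled native-space norm via Lemma~\ref{lemStokesNormEquiv}, and telescope with Theorem~\ref{thmStokesSubsErrors}. The only book-keeping differences are that the paper passes through $\|\mathbf{Ee}_n\|_{\mathcal{N}_{\boldsymbol{\Phi}_{n+1}}}$ and iterates $n$ times (rather than bounding by $\|\mathbf{Ee}_{n-1}\|_{\mathcal{N}_{\boldsymbol{\Phi}_n}}$ and iterating $n-1$ times with an absorbed $\alpha_1^{-1}$), and for the boundary estimate the paper goes via $\mathbf{H}^{3/2}(\partial\Omega)$ using \eqref{eqnVelBoundaryError} rather than applying \eqref{eqnSamplingBdy} with $\sigma=0$; both variants are equivalent.
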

\begin{proof}
With Lemma \ref{lemStokesNormEquiv} and Theorems \ref{thmStokesL2} and \ref{thmStokesSubsErrors} we have
\begin{eqnarray*}
\|\mathbf{v}-\mathbf{M}_n \mathbf{v}\|_{\mathbf{L}_2(\Omega)} &=& \|\mathbf{e}_n \|_{\mathbf{L}_2(\Omega)} \\
&\leq& C \bar{h}_{1,n}^{\tau -2} \|\mathbf{Ee}_n \|_{\mathcal{N}_{\mathbf{\Phi}}(\mathbb{R}^d)} \\
&\leq& C  \|\mathbf{Ee}_n \|_{\mathcal{N}_{\mathbf{\Phi}_{n+1}}(\mathbb{R}^d)} \\
&\leq& C \alpha_1^n \|\mathbf{Ev}\|_{\mathcal{N}_{\mathbf{\Phi}_{1}}(\mathbb{R}^d)} \\
&\leq& C \alpha_1^n \|\mathbf{Ev}\|_{\mathcal{N}_{\mathbf{\Phi}}(\mathbb{R}^d)} \\
&\leq& C \alpha_1^n \left(\|\mathbf{u}\|_{\mathbf{H}^{\tau}(\Omega)} + \|p\|_{H^{\tau-1}(\Omega)} \right),
\end{eqnarray*}
which proves the first result. For the second result, with \eqref{eqnVelBoundaryError} we can see that
\begin{eqnarray*}
\|\mathbf{u}-\mathbf{M}_n \mathbf{u}\|_{\mathbf{L}_2(\partial \Omega)} &\leq& \|\mathbf{u}-\mathbf{M}_n \mathbf{u}\|_{\mathbf{H}^{3/2}(\partial \Omega)} \\
&\leq& C h_{2,n}^{\tau-2} \|\mathbf{u}-\mathbf{M}_n \mathbf{u}\|_{\mathbf{H}^{\tau}(\Omega)} \\
&\leq& C \bar{h}_{n}^{\tau -2} \|\mathbf{Ee}_n \|_{\mathcal{N}_{\mathbf{\Phi}}(\mathbb{R}^d)},
\end{eqnarray*}
and the remainder of the proof is the same as for the first result.
\end{proof}

\section{Condition number} \label{StokesCondNumber}

In this section, we present upper and lower bounds for the eigenvalues of the multiscale symmetric collocation algorithm for the Stokes problem.  At each step of the multiscale algorithm, we need to solve a linear system resulting from the collocation conditions \eqref{eqnColl1} and \eqref{eqnColl2} on a set $X = \{\mathbf{x}_1,\ldots,\mathbf{x}_M\}$:
\[
\mathbf{A}_\delta \mathbf{b} = (\mathbf{f}\,\,\, \mathbf{g})^T.
\]
Since the collocation matrix $\mathbf{A}_{\delta}$ is symmetric and positive definite, we know that the condition number is given by
\begin{equation}
\kappa(\mathbf{A}_{\delta}) = \frac{\lambda_{\max}(\mathbf{A}_{\delta})}{\lambda_{\min}(\mathbf{A}_{\delta})} \label{eqnCondNumberMaxMinEig},
\end{equation}
where $\lambda_{\max}(\mathbf{A}_{\delta})$ and $\lambda_{\min}(\mathbf{A}_{\delta})$ denote the maximum and minimum eigenvalues of $\mathbf{A}_{\delta}$.

We will first need several technical lemmas concerning derivatives of the Wendland functions.
\begin{lem}
\label{lemTech1}
With spatial dimension $d$ and smoothness parameter $k = 2, 3,\ldots$ let $\psi_{\ell,k}$ be the original Wendland function. Then with $\mathbf{x},\mathbf{y} \in \mathbb{R}^d$ and $1 \leq i,j \leq d$ and $i \neq j$, we have
\[
\partial_{ij} \Psi_{\ell,k}(\mathbf{x}-\mathbf{y})\vert_{\mathbf{x}=\mathbf{y}} = 0.
\]
\end{lem}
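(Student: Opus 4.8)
The plan is to exploit the radial symmetry of $\Psi_{\ell,k}$. Since differentiating $\Psi_{\ell,k}(\mathbf{x}-\mathbf{y})$ in $\mathbf{x}$ and then setting $\mathbf{x}=\mathbf{y}$ produces exactly $(\partial_i\partial_j\Psi_{\ell,k})(\mathbf{0})$, it suffices to show that every off-diagonal entry of the Hessian of $\Psi_{\ell,k}$ at the origin vanishes.

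First I would record that for $k\ge 2$ the function $\Psi_{\ell,k}$ is genuinely $C^2$ on all of $\mathbb{R}^d$, including at the origin: its native space is norm-equivalent to $H^{\frac{d+1}{2}+k}(\mathbb{R}^d)$, and since $\frac{d+1}{2}+k-\frac d2 = k+\frac12 > 2$, the Sobolev embedding theorem gives $\Psi_{\ell,k}\in C^{k}(\mathbb{R}^d)\subseteq C^2(\mathbb{R}^d)$ (alternatively this is immediate from the polynomial structure of $\psi_{\ell,k}$ near $r=0$). This legitimises the derivative manipulations below and lets one evaluate the mixed partial at the origin by continuity.

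Next, fix $i\neq j$ and let $\sigma$ be the reflection $\sigma(\mathbf{z}) = \mathbf{z} - 2z_i\mathbf{e}_i$ that flips the $i$-th coordinate. Radiality gives $\Psi_{\ell,k}\circ\sigma = \Psi_{\ell,k}$. Applying the chain rule twice, $\partial_i\partial_j(\Psi_{\ell,k}\circ\sigma)(\mathbf{z}) = -\,(\partial_i\partial_j\Psi_{\ell,k})(\sigma\mathbf{z})$, because the $\partial_i$-derivative picks up a sign under $\sigma$ while the $\partial_j$-derivative does not (here $j\neq i$). Since $\sigma(\mathbf{0})=\mathbf{0}$, evaluating at $\mathbf{z}=\mathbf{0}$ and using $\Psi_{\ell,k}\circ\sigma=\Psi_{\ell,k}$ yields $(\partial_i\partial_j\Psi_{\ell,k})(\mathbf{0}) = -(\partial_i\partial_j\Psi_{\ell,k})(\mathbf{0})$, hence $\partial_{ij}\Psi_{\ell,k}(\mathbf{x}-\mathbf{y})\vert_{\mathbf{x}=\mathbf{y}} = (\partial_i\partial_j\Psi_{\ell,k})(\mathbf{0}) = 0$.

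I expect the only point needing care is the $C^2$-regularity at the origin; once that is in place the symmetry step is purely formal. As a more computational alternative (which also prepares the derivative recursions used in the later lemmas), one can differentiate the integral representation \eqref{formulaWFsIntDefn} — the boundary term at $s=r$ vanishes since $(s^2-r^2)^{k-1}$ does for $k\ge 2$ — to get $\psi_{\ell,k}'(r) = -r\,\psi_{\ell,k-1}(r)$, then use the radial-Hessian identity $\partial_{ij}\Psi_{\ell,k}(\mathbf{x}) = \bigl(\psi_{\ell,k}''(r) - \psi_{\ell,k}'(r)/r\bigr)\,x_ix_j/r^2$ for $i\neq j$, observe $\psi_{\ell,k}''(r) - \psi_{\ell,k}'(r)/r = -r\,\psi_{\ell,k-1}'(r) = O(r^2)$ as $r\to 0$, and note $\lvert x_ix_j/r^2\rvert\le 1$; letting $\mathbf{x}\to\mathbf{0}$ gives the claim. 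In that version the only obstacle is bookkeeping: justifying differentiation under the integral sign and the $r\to 0$ limits.
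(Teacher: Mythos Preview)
Your proof is correct and takes a genuinely different route from the paper's. The paper works entirely with the polynomial representation $\psi_{\ell,k}(r)=\sum_{m=0}^{2k+\ell} b_m r^m$ on $[0,1]$: after writing $\partial_{ij}\Psi_{\ell,k}(\mathbf{x}-\mathbf{y}) = \frac{(x_i-y_i)(x_j-y_j)}{r^2}\bigl(\psi_{\ell,k}''(r)-\psi_{\ell,k}'(r)/r\bigr)$, it expands the bracket as $\sum_m \bar b_m r^{m-2}$, checks explicitly that $\bar b_1=\bar b_2=\bar b_3=0$ (using that the first $k$ odd coefficients of $\psi_{\ell,k}$ vanish and that $\bar b_2=(2-2)b_2=0$), and concludes that the whole expression factors as $(x_i-y_i)(x_j-y_j)\sum_{m\ge 4}\bar b_m r^{m-4}$, which is zero at $\mathbf{x}=\mathbf{y}$. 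Your reflection argument is shorter and more conceptual --- it applies to any $C^2$ radial function --- and your alternative via the recursion $\psi_{\ell,k}'(r)=-r\,\psi_{\ell,k-1}(r)$ is also cleaner than coefficient-chasing. What the paper's explicit approach buys is that the same coefficient bookkeeping is reused verbatim in the three subsequent lemmas (for $\partial_{ij}\Delta^2\Psi$ and the diagonal derivatives $\partial_{jj}\Psi$, $\partial_{jj}\Delta^2\Psi$), where one actually needs the \emph{signs} of $b_2$ and $b_6$; the pure symmetry argument gives vanishing of off-diagonal mixed partials but cannot supply that sign information.
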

\begin{proof}
We recall that the Wendland functions are piecewise polynomials with support $[0,1]$ and we can write \cite{Wen05}
\begin{equation}
\label{eqnWF}
\psi_{\ell,k}(r) = \sum_{i=0}^{2k+\ell} b_i\, r^i, \quad r \in [0,1]
\end{equation}
and that the first $k$ odd coefficients $\{b_{2i+1}\}_{i=0}^k$ vanish. With the chain rule, where $\mathbf{x} - \mathbf{y} = (x_1 - y_1,\ldots, x_d - y_d)$ and $r = \|\mathbf{x}-\mathbf{y}\|_2$, we have
\[
\partial_{ij} \Psi_{\ell,k}(\mathbf{x} - \mathbf{y}) = \frac{(x_i-y_i)(x_j-y_j)}{r^2} \left( \psi_{\ell,k}^{(2)}(r) - \frac{1}{r}\psi_{\ell,k}^{(1)}(r)\right).
\]
Using \eqref{eqnWF}, this last expression becomes
\begin{eqnarray}
\partial_{ij} \Psi_{\ell,k}(\mathbf{x} - \mathbf{y}) &=& \frac{(x_i-y_i)(x_j-y_j)}{r^2} \left( \sum_{i=2}^{2k+\ell} b_i \, (i-1)_2\, r^{i-2} - \sum_{i=1}^{2k+\ell} i \, b_i \, r^{i-2}\right), \nonumber \\
&=:& \frac{(x_i-y_i)(x_j-y_j)}{r^2} \left( \sum_{i=1}^{2k+\ell} \bar{b}_i \, r^{i-2} \right)
\end{eqnarray}
where
\[
(c)_n := \frac{\Gamma(c+n)}{\Gamma(c)}
\]
denotes the Pochhammer symbol. Now the first three coefficients $\{\bar{b}_i\}_{i=1}^3$ are
\begin{eqnarray*}
\bar{b}_1 &=& b_1 = 0 \\
\bar{b}_2 &=& (2 - 2)b_2 = 0 \\
\bar{b}_3 &=& 0,
\end{eqnarray*}
since the first $k$ odd coefficients of the Wendland polynomial are zero and $k\geq2$. Hence we can write
\[
\partial_{ij} \Psi_{\ell,k}(\mathbf{x} - \mathbf{y}) =
(x_i-y_i)(x_j-y_j) \left( \sum_{i=4}^{2k+\ell} \bar{b}_i \, r^{i-4} \right),
\]
and the result follows immediately.
\end{proof}

\begin{lem}
\label{lemTech2}
With spatial dimension $d$ and smoothness parameter $k = 3,4,\ldots$ let $\psi_{\ell,k}$ be the original Wendland function. Then with $\mathbf{x},\mathbf{y} \in \mathbb{R}^d$ and $1 \leq i,j \leq d$ and $i \neq j$, we have
\[
\partial_{ij} \Delta^2 \Psi_{\ell,k}(\mathbf{x}-\mathbf{y})\vert_{\mathbf{x}=\mathbf{y}} = 0.
\]
\end{lem}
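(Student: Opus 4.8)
The plan is to reduce the claim to the chain-rule computation already carried out in Lemma~\ref{lemTech1}. The key observation is that $\Psi_{\ell,k}(\mathbf{x}) = \psi_{\ell,k}(\|\mathbf{x}\|_2)$ is radial and $\Delta^2$ has constant coefficients, so $\Delta^2\Psi_{\ell,k}$ is again radial, with a polynomial profile near the origin (the only region that matters, since we evaluate at $\mathbf{x}=\mathbf{y}$); the whole argument then hinges on showing that the coefficient of $r^1$ in that profile vanishes, which is exactly what the stronger hypothesis $k\geq 3$ provides.

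First I would record the action of the Laplacian on radial functions: if $F(\mathbf{x}) = f(\|\mathbf{x}\|_2)$ then $\Delta F(\mathbf{x}) = f''(r) + \tfrac{d-1}{r}f'(r)$ with $r = \|\mathbf{x}\|_2$, so on monomials $\Delta$ acts by $r^i \mapsto i(i+d-2)\,r^{i-2}$, lowering the degree by two and preserving parity; iterating, $\Delta^2$ acts by $r^i \mapsto i(i-2)(i+d-2)(i+d-4)\,r^{i-4}$. Writing $\psi_{\ell,k}(r) = \sum_{i=0}^{2k+\ell} b_i r^i$ on $[0,1]$ as in \eqref{eqnWF}, with the first $k$ odd coefficients $b_1,b_3,\ldots,b_{2k-1}$ vanishing, this yields $\Delta^2\Psi_{\ell,k}(\mathbf{x}) = \phi(\|\mathbf{x}\|_2)$ near $\mathbf{0}$ with $\phi(r) = \sum_j c_j r^j$ a genuine polynomial (the monomial map annihilates the exponents $i\in\{0,2\}$ and the odd exponents $\leq 2k-1$ are already absent, so no inverse powers or logarithms survive), where $c_j = (j+4)(j+2)(j+d+2)(j+d)\,b_{j+4}$. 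In particular $c_1 = 15(d+3)(d+1)\,b_5$, and since $5\leq 2k-1$ whenever $k\geq 3$, the hypothesis forces $b_5=0$ and hence $c_1=0$; this is the only essential use of $k\geq 3$.

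Then I would repeat the computation of Lemma~\ref{lemTech1} with $\phi$ in place of $\psi_{\ell,k}$: for $i\neq j$ and $r = \|\mathbf{x}-\mathbf{y}\|_2$,
\[
\partial_{ij}\Delta^2\Psi_{\ell,k}(\mathbf{x}-\mathbf{y}) = \frac{(x_i-y_i)(x_j-y_j)}{r^2}\left(\phi''(r) - \tfrac1r\phi'(r)\right).
\]
Substituting $\phi(r)=\sum_j c_j r^j$ gives $\phi''(r)-\tfrac1r\phi'(r) = \sum_j j(j-2)\,c_j r^{j-2}$, whose only negative power of $r$ is the $j=1$ term $-c_1 r^{-1}$ (the factor $j(j-2)$ kills $j=0$ and $j=2$), so
\[
\partial_{ij}\Delta^2\Psi_{\ell,k}(\mathbf{x}-\mathbf{y}) = (x_i-y_i)(x_j-y_j)\left(-\frac{c_1}{r^3} + \sum_{j\geq 3} j(j-2)\,c_j\,r^{j-4}\right).
\]
With $c_1=0$ the bracket is $O(r^{-1})$ as $r\to 0$, while $|(x_i-y_i)(x_j-y_j)|\leq r^2$; since for $k\geq 3$ the Wendland function is smooth enough that $\Delta^2\Psi_{\ell,k}\in C^2(\mathbb{R}^d)$, the mixed derivative on the left is continuous at the origin, and by the preceding bound it equals its limit there, namely $0$.

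The only step that needs genuine attention is the bookkeeping in the second paragraph: verifying that $\Delta^2$ introduces no spurious $r^{-1}$, $r^{-2}$ or logarithmic terms into the profile (immediate from the explicit monomial map together with the absence of odd powers $\leq 2k-1$ in $\psi_{\ell,k}$), and identifying precisely the coefficient $b_5$ --- so that the threshold is $k\geq 3$ rather than the $k\geq 2$ of Lemma~\ref{lemTech1} --- as the one whose vanishing is required to control the $O(r^{-1})$ behaviour of the bracket above. Everything else is a routine differentiation once the radial-polynomial structure is in place.
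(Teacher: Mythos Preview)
Your proof is correct, and it takes a genuinely different route from the paper's own argument.

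The paper computes $\partial_{ij}\Delta^2\Psi_{\ell,k}(\mathbf{x}-\mathbf{y})$ directly via the chain rule as
\[
\frac{(x_i-y_i)(x_j-y_j)}{r^2}\Bigl(\psi^{(6)}+\tfrac1r\psi^{(5)}-\tfrac{7}{r^2}\psi^{(4)}+\tfrac{12}{r^3}\psi^{(3)}-\tfrac{15}{r^4}\psi^{(2)}+\tfrac{15}{r^5}\psi^{(1)}\Bigr),
\]
then substitutes \eqref{eqnWF} and verifies, coefficient by coefficient, that the resulting $\tilde b_1,\dots,\tilde b_7$ all vanish, so the bracket carries an overall factor of $r^6$ and the product with $(x_i-y_i)(x_j-y_j)/r^2$ tends to zero. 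Your approach instead first exploits that $\Delta^2$ preserves radiality, computes the new profile $\phi$ via the monomial map $r^i\mapsto i(i-2)(i+d-2)(i+d-4)r^{i-4}$, and then invokes the much simpler Lemma~\ref{lemTech1} computation on $\phi$. This is cleaner and more modular: only the single coefficient $c_1$ (coming from $b_5$) has to be checked, the role of the threshold $k\ge 3$ is isolated precisely, and the dependence on $d$ is explicit throughout. The paper's direct computation, by contrast, trades conceptual simplicity for a longer but entirely mechanical verification of several coefficients. Both arguments ultimately rest on the same structural fact --- the vanishing of the low odd coefficients of $\psi_{\ell,k}$ --- so neither is strictly more general, but yours scales better (the same two-step reduction would handle $\partial_{ij}\Delta^m$ for higher $m$ with essentially no extra work).
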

\begin{proof}
Once again employing the chain rule gives
\begin{multline*}
\partial_{ij} \Delta^2 \Psi_{\ell,k}(\mathbf{x}-\mathbf{y}) = \frac{(x_i-y_i)(x_j-y_j)}{r^2} \\ \times \left( \psi_{\ell,k}^{(6)}(r) + \frac{1}{r}\psi_{\ell,k}^{(5)}(r) - \frac{7}{r^2}\psi_{\ell,k}^{(4)}(r) + \frac{12}{r^3}\psi_{\ell,k}^{(3)}(r) - \frac{15}{r^4}\psi_{\ell,k}^{(2)}(r) + \frac{15}{r^5}\psi_{\ell,k}^{(1)}(r)\right).
\end{multline*}
With \eqref{eqnWF} we can rewrite this as
\begin{eqnarray*}
&& \partial_{ij} \Delta^2 \Psi_{\ell,k}(\mathbf{x}-\mathbf{y}) = \frac{(x_i-y_i)(x_j-y_j)}{r^2} \Bigg[ \sum_{i=6}^{2k+\ell} b_i (i-5)_6 r^{i-6} + \sum_{i=5}^{2k+\ell} b_i (i-4)_5 r^{i-6} \\&& - 7\sum_{i=4}^{2k+\ell} b_i (i-3)_4 r^{i-6} + 12\sum_{i=3}^{2k+\ell} b_i (i-2)_3 r^{i-6} - 15\sum_{i=2}^{2k+\ell} b_i (i-1)_2 r^{i-6} + 15\sum_{i=1}^{2k+\ell} i \, b_i r^{i-6} \Bigg] \\
&& \quad \quad \quad \quad \quad \quad \quad \quad =: \sum_{i=1}^{2k+\ell} \tilde{b}_i r^{i-6}.
\end{eqnarray*}
Since $\tilde{b}_i = C(i) b_i$, the first $k$ odd coefficients $\{\tilde{b}_{2i+1}\}_{i=0}^k$ are zero. Then we can determine other coefficients as
\begin{eqnarray*}
\tilde{b}_2 &=& 30(b_2 - b_2) = 0 \\
\tilde{b}_4 &=& b_4(60 - 15(3)_2 + 12(2)_3 - 7(1)_4) = 0 \\
\tilde{b}_6 &=& b_6(90 - 15(5)_2 + 12(4)_3 - 7(3)_4 + (1)_6) = 0.
\end{eqnarray*}
Hence since $k = 3, 4, \ldots$, we can write
\[
\partial_{ij} \Delta^2 \Psi_{\ell,k}(\mathbf{x}-\mathbf{y}) = (x_i-y_i)(x_j-y_j) \sum_{i=8}^{2k+\ell} \tilde{b}_i r^{i-8},
\]
and the result follows immediately.
\end{proof}

\begin{lem}
\label{lemTech3}
With spatial dimension $d$ and smoothness parameter $k = 2, 3,\ldots$ let $\psi_{\ell,k}$ be the original Wendland function. Then with $\mathbf{x},\mathbf{y} \in \mathbb{R}^d$ and $1 \leq j \leq d$ we have
\[
\partial_{jj} \Psi_{\ell,k}(\mathbf{x}-\mathbf{y})\vert_{\mathbf{x}=\mathbf{y}} < 0,
\]
and is independent of $j$.
\end{lem}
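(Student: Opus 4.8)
The plan is to argue exactly as in the proof of Lemma \ref{lemTech1}, now with the two indices equal. Writing $r = \|\mathbf{x}-\mathbf{y}\|_2$, the chain rule gives
\[
\partial_{jj}\Psi_{\ell,k}(\mathbf{x}-\mathbf{y}) = \frac{(x_j-y_j)^2}{r^2}\left(\psi_{\ell,k}^{(2)}(r) - \frac{1}{r}\psi_{\ell,k}^{(1)}(r)\right) + \frac{1}{r}\psi_{\ell,k}^{(1)}(r),
\]
and I would insert the polynomial expansion \eqref{eqnWF}. Since $k\geq2$, the coefficients $b_1$ and $b_3$ vanish, so the bracketed factor equals $\sum_{i\geq4} i(i-2)b_i r^{i-2} = O(r^2)$, whence the first term tends to $0$ as $\mathbf{x}\to\mathbf{y}$, while $\frac{1}{r}\psi_{\ell,k}^{(1)}(r) = 2b_2 + O(r^2)$. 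As the right-hand side extends continuously to $\mathbf{x}=\mathbf{y}$ (indeed $\Psi_{\ell,k}\in C^2(\mathbb{R}^d)$ when $k\geq2$), this shows $\partial_{jj}\Psi_{\ell,k}(\mathbf{x}-\mathbf{y})\vert_{\mathbf{x}=\mathbf{y}} = 2b_2 = \psi_{\ell,k}^{(2)}(0)$, which does not involve $j$. (Equivalently, the restriction of $\Psi_{\ell,k}$ to the $j$-th coordinate axis is $t\mapsto\psi_{\ell,k}(|t|)$, whose second derivative at $0$ equals $\psi_{\ell,k}^{(2)}(0)$ because $\psi_{\ell,k}^{(1)}(0)=0$.)

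It then remains only to establish $\psi_{\ell,k}^{(2)}(0)<0$, which I would read off from the integral definition \eqref{formulaWFsIntDefn}. Differentiating under the integral sign, the boundary contribution at $s=r$ vanishes since $(s^2-r^2)^{k-1}$ vanishes there for $k\geq2$; collecting the Gamma and power-of-two factors yields the recursion $\psi_{\ell,k}^{(1)}(r) = -r\,\psi_{\ell,k-1}(r)$, where $\psi_{\ell,k-1}$ denotes the function obtained from \eqref{formulaWFsIntDefn} with $k$ replaced by $k-1$ and $\ell$ kept fixed. Differentiating once more gives $\psi_{\ell,k}^{(2)}(0) = -\psi_{\ell,k-1}(0)$, and since
\[
\psi_{\ell,k-1}(0) = \frac{1}{\Gamma(k-1)\,2^{k-2}}\int_0^1 s^{2k-3}(1-s)^{\ell}\,\mathrm{d}s > 0
\]
(positive integrand), we obtain $\partial_{jj}\Psi_{\ell,k}(\mathbf{x}-\mathbf{y})\vert_{\mathbf{x}=\mathbf{y}} = -\psi_{\ell,k-1}(0) < 0$, as claimed.

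The only real obstacle is the strictness of the inequality: the fact that the origin is a global maximum of $\Psi_{\ell,k}$ yields only $\psi_{\ell,k}^{(2)}(0)\le0$, so one genuinely needs the explicit identity $\psi_{\ell,k}^{(2)}(0) = -\psi_{\ell,k-1}(0)$ (or an explicit evaluation of $b_2$) to exclude a degenerate maximum. Everything else is bookkeeping with the chain rule and the Wendland polynomial coefficients, entirely parallel to Lemmas \ref{lemTech1} and \ref{lemTech2}.
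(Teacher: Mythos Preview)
Your proposal is correct. The chain-rule computation and the reduction to $2b_2$ at $r=0$ are identical to the paper's argument (the paper phrases the vanishing of the first summand as an appeal to Lemma~\ref{lemTech1} rather than reading off $b_1=b_3=0$ directly, but this is the same observation).

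Where you diverge is in establishing $b_2<0$. The paper quotes an explicit closed-form expression $b_2 = C(-1)^k\binom{1/2}{k}$ from an external reference and then manipulates Gamma functions to extract the sign. Your route---differentiating the integral definition \eqref{formulaWFsIntDefn} to obtain the recursion $\psi_{\ell,k}^{(1)}(r) = -r\,\psi_{\ell,k-1}(r)$ and hence $\psi_{\ell,k}^{(2)}(0) = -\psi_{\ell,k-1}(0)<0$---is more self-contained, since it avoids citing an outside formula for the Wendland coefficients and keeps the argument entirely within the paper's own setup. The paper's approach, on the other hand, yields the actual value of $b_2$ rather than only its sign, which is not needed here but fits the pattern used again in Lemma~\ref{lemTech4} for $b_6$.
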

\begin{proof}
With the chain rule, where once again $\mathbf{x} - \mathbf{y} = (x_1 - y_1,\ldots, x_d - y_d)$ and $r = \|\mathbf{x}-\mathbf{y}\|_2$, we have
\[
\partial_{jj} \Psi_{\ell,k}(\mathbf{x} - \mathbf{y}) = \frac{(x_j-y_j)^2}{r^2} \left( \psi_{\ell,k}^{(2)}(r) - \frac{1}{r}\psi_{\ell,k}^{(1)}(r)\right) + \frac{1}{r}\psi_{\ell,k}^{(1)}(r).
\]
With Lemma \ref{lemTech1}, the term in brackets is equal to zero when $\mathbf{x} = \mathbf{y}$. Using \eqref{eqnWF} and noting that the first $k$ odd coefficients are zero, this last term becomes
\begin{equation*}
\frac{1}{r}\psi_{\ell,k}^{(1)}(r) = \sum_{i=2}^{2\ell+k} i \, b_i \, r^{i-2},
\end{equation*}
which means that the case of $\mathbf{x}=\mathbf{y}$, which is equivalent to $r=0$, reduces down to $2b_2$. Now combining positive terms into a generic constant $C$, we have from \cite{CheH12}
\begin{eqnarray*}
b_2 &=& C (-1)^k {\frac{1}{2} \choose k} \\
&=& C \frac{(-1)^k}{\Gamma(\frac{1}{2}-(k-1))} \\
&=& C (-1)^k (-1)^{k-1} < 0,
\end{eqnarray*}
where we have also used \cite[8.339.3]{GraR07}
\[
\Gamma\left( \frac{1}{2}-n \right) = \sqrt{\pi} \frac{(-4)^n n!}{(2n)!}.
\]
\end{proof}

\begin{lem}
\label{lemTech4}
With spatial dimension $d$ and smoothness parameter $k = 3,4,\ldots$ let $\psi_{\ell,k}$ be the original Wendland function. Then with $\mathbf{x},\mathbf{y} \in \mathbb{R}^d$ and $1 \leq j \leq d$ we have
\[
\partial_{jj} \Delta^2 \Psi_{\ell,k}(\mathbf{x}-\mathbf{y})\vert_{\mathbf{x}=\mathbf{y}} < 0,
\]
and is independent of $j$.
\end{lem}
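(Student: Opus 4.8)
The plan is to mirror the proof of Lemma \ref{lemTech3}, but applied to the radial function $\Delta^2\Psi_{\ell,k}$ and invoking Lemma \ref{lemTech2} in place of Lemma \ref{lemTech1}. Write $r=\|\mathbf{x}-\mathbf{y}\|_2$ and let $h$ be the radial profile of $\Delta^2\Psi_{\ell,k}$, so that $\Delta^2\Psi_{\ell,k}(\mathbf{z})=h(\|\mathbf{z}\|_2)$. Since $k\geq 3$ the first $k$ odd coefficients of $\psi_{\ell,k}$ in \eqref{eqnWF} vanish, so $h$ is in fact a polynomial on $[0,1]$ (the potentially singular contributions coming from $\Delta^2 r$ and $\Delta^2 r^3$ are absent). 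Applying the chain rule to $\Delta^2\Psi_{\ell,k}$ exactly as in the proof of Lemma \ref{lemTech3} yields
\[
\partial_{jj}\Delta^2\Psi_{\ell,k}(\mathbf{x}-\mathbf{y})=\frac{(x_j-y_j)^2}{r^2}\left(h''(r)-\frac{1}{r}h'(r)\right)+\frac{1}{r}h'(r),
\]
in which the bracketed factor $h''(r)-\frac1r h'(r)$ is precisely the radial expression occurring in the proof of Lemma \ref{lemTech2}, since for $i\neq j$ one has $\partial_{ij}\Delta^2\Psi_{\ell,k}(\mathbf{x}-\mathbf{y})=\frac{(x_i-y_i)(x_j-y_j)}{r^2}\left(h''(r)-\frac1r h'(r)\right)$.

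By the proof of Lemma \ref{lemTech2}, that bracketed factor multiplied by $(x_i-y_i)(x_j-y_j)/r^2$ has the form $(x_i-y_i)(x_j-y_j)\sum_{m\geq 8}\tilde b_m r^{m-8}$; hence $h''(r)-\frac1r h'(r)=O(r^2)$, and the first term above, carrying the extra factor $(x_j-y_j)^2$, vanishes at $\mathbf{x}=\mathbf{y}$. For the remaining term I would insert \eqref{eqnWF}: $h$ has no linear term (as $b_5=0$), so $\frac1r h'(r)$ is finite at $r=0$, and every contribution to its value there except the one stemming from the $r^6$ term of $\psi_{\ell,k}$ either vanishes identically or is $O(r)$. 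One thus obtains
\[
\partial_{jj}\Delta^2\Psi_{\ell,k}(\mathbf{x}-\mathbf{y})\big\vert_{\mathbf{x}=\mathbf{y}}=c_d\, b_6 ,
\]
where $b_6$ is the coefficient of $r^6$ in \eqref{eqnWF} and $c_d>0$ is an explicit constant depending only on $d$, arising because $\Delta^2 r^6$ is a positive constant times $r^2$ while $\partial_{jj}(z_1^2+\cdots+z_d^2)=2$. This value is manifestly independent of $j$.

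It then remains only to show $b_6<0$. I would deduce this from the closed-form expressions for the coefficients of the Wendland polynomials in \cite{CheH12}, in the same way that $b_2=C(-1)^k\binom{1/2}{k}<0$ was obtained in the proof of Lemma \ref{lemTech3}, together with the identity \cite[8.339.3]{GraR07} for $\Gamma(\tfrac12-n)$; a direct check (for instance $\psi_{5,3}(r)\propto(1-r)_+^8(32r^3+25r^2+8r+1)$, whose expansion gives $b_6=-462$) confirms the sign. The main obstacle is exactly this last step: unlike $b_2$, the coefficient $b_6$ is a genuine linear combination of several binomial-type quantities rather than a single one, so establishing its sign uniformly for all $k\geq 3$ is the delicate point; one must also keep track of the sign of the constant $c_d$, although that is routine.
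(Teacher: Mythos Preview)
Your approach is essentially the paper's: split $\partial_{jj}\Delta^2\Psi_{\ell,k}$ into the two chain-rule pieces, kill the first with Lemma~\ref{lemTech2}, and reduce the second at $r=0$ to a positive multiple of $b_6$. The paper writes everything directly in terms of derivatives of $\psi_{\ell,k}$ rather than your intermediate profile $h$, and obtains the explicit constant $1152$ in place of your $c_d$, but the structure is identical.

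Your stated ``main obstacle'' is not actually an obstacle. You anticipate that $b_6$ will be a complicated linear combination of binomial terms, but the formula from \cite{CheH12} gives it as a single term,
\[
b_6 = C\,(-1)^k \binom{5/2}{k} = C\,\frac{(-1)^k}{\Gamma\!\left(\tfrac12-(k-3)\right)},
\]
and the same identity $\Gamma(\tfrac12-n)=\sqrt{\pi}\,(-4)^n n!/(2n)!$ used for $b_2$ in Lemma~\ref{lemTech3} yields $b_6 = C(-1)^k(-1)^{k-3}<0$ for all $k\geq 3$. So the sign is no harder here than it was for $b_2$.
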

\begin{proof}
With the chain rule, where once again $\mathbf{x} - \mathbf{y} = (x_1 - y_1,\ldots, x_d - y_d)$ and $r = \|\mathbf{x}-\mathbf{y}\|_2$, we have
\begin{multline*}
\partial_{jj} \Delta^2 \Psi_{\ell,k}(\mathbf{x} - \mathbf{y}) = \frac{(x_j-y_j)^2}{r^2} \\ \times \left( \psi_{\ell,k}^{(6)}(r) + \frac{1}{r}\psi_{\ell,k}^{(5)}(r) - \frac{7}{r^2}\psi_{\ell,k}^{(4)}(r) + \frac{12}{r^3}\psi_{\ell,k}^{(3)}(r) - \frac{15}{r^4}\psi_{\ell,k}^{(2)}(r) + \frac{15}{r^5}\psi_{\ell,k}^{(1)}(r)\right) \\
+ \frac{1}{r}\left( \psi_{\ell,k}^{(5)}(r) + \frac{2}{r}\psi_{\ell,k}^{(4)}(r) - \frac{3}{r^2}\psi_{\ell,k}^{(3)}(r) + \frac{3}{r^3}\psi_{\ell,k}^{(2)}(r) - \frac{3}{r^4}\psi_{\ell,k}^{(1)}(r) \right).
\end{multline*}
With Lemma \ref{lemTech1}, the first term in the previous expression is equal to zero when $\mathbf{x} = \mathbf{y}$. As before, we can write the second term as a series
\[
\sum_{i=1}^{2k+\ell} \underbar{b}_i \, r^{i-6}.
\]
Since $k \geq 3$, $\underbar{b}_1= \underbar{b}_3 = \underbar{b}_5 = 0$ and equating coefficients gives
\begin{eqnarray*}
\underbar{b}_2 &=& b_2(-6+3(1)_2) = 0 \\
\underbar{b}_4 &=& b_4(-12+3(3)_2 - 3(2)_3 + 2(1)_4) = 0,
\end{eqnarray*}
which means we are left with
\[
\sum_{i=6}^{2k+\ell} \underbar{b}_i \, r^{i-6}.
\]
Hence the case of $\mathbf{x}=\mathbf{y}$, which is equivalent to $r=0$, reduces down to $\underbar{b}_6$ which is given by
\[
\underbar{b}_6 = ((2)_5 + 2(3)_4 - 3(4)_3 + 3(5)_2 - 18)b_6 = 1152b_6.
\]
As before, combining positive terms into a generic constant $C$ and noting that $k =3,4,\ldots$, we have from \cite{CheH12}
\begin{eqnarray*}
b_6 &=& C (-1)^k {\frac{5}{2} \choose k} \\
&=& C \frac{(-1)^k}{\Gamma(\frac{1}{2}-(k-3))} \\
&=& C (-1)^k (-1)^{k-3} < 0.
\end{eqnarray*}
\end{proof}

The next theorem gives a lower bound on the minimum eigenvalue of $\mathbf{A}_{\delta}$.

\begin{thm}
\label{thmMinEig1}
Suppose the kernel $\mathbf{\Phi}$ is defined by \eqref{eqnDefnPhi} and define the scaled kernel $\mathbf{\Phi}_{\delta}$ by \eqref{eqnDefnPhiDelta} with a positive scaling factor $\delta$. Then the smallest eigenvalue of the collocation matrix defined by \eqref{eqnColl1} and \eqref{eqnColl2} can be bounded by
\[
\lambda_{\min}(\mathbf{A}) \geq C \left( \frac{q_X}{\delta}\right)^{2\tau+2} q_X^{-d-2},
\]
where the constant $C$ is independent of the pointset $X$.
\end{thm}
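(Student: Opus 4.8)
The plan is to reduce the lower bound on $\lambda_{\min}(\mathbf{A}_\delta)$ to a statement about a suitable power function associated with the matrix-valued kernel $\mathbf{\Phi}_\delta$, and then exploit the block-diagonal structure of $\mathbf{\Phi}_\delta$ together with the Fourier-transform decay \eqref{eqnFourierUpsilon}--\eqref{eqnFourierPsi}. Recall that for symmetric collocation the entries of $\mathbf{A}_\delta$ are $(L^x L^y \boldsymbol{\Phi}_\delta)$, $(L^x \boldsymbol{\Phi}_\delta)$ and $(\boldsymbol{\Phi}_\delta)$ evaluated at centre differences, so $\mathbf{A}_\delta$ is the Gram matrix of the functionals $\lambda_j = \delta_{\mathbf{x}_j}\circ L$ (interior) and $\lambda_j = \delta_{\mathbf{x}_j}$ (boundary) with respect to the native space inner product. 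Hence $\boldsymbol{\alpha}^T \mathbf{A}_\delta \boldsymbol{\alpha} = \| \sum_j \alpha_j \lambda_j \|_{\mathcal{N}_{\boldsymbol{\Phi}_\delta}^*}^2$, and the standard route (as in \cite{Wen05}, Chapter 12) is to lower-bound this by constructing, for each fixed coefficient vector of unit length, a test function in the native space on which the functional is large while the native-space norm is controlled; the optimal choice leads to the \emph{power function} / \emph{separation distance} estimate.

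First I would pass to Fourier space: using \eqref{eqnStokesPhiDeltaNorm} the dual native-space norm of $\sum_j \alpha_j \lambda_j$ becomes an integral of $|\sum_j \alpha_j \widehat{(\lambda_j)}(\boldsymbol{\omega})|^2$ weighted by $\widehat{\psi_{\tau\pm1,\delta}}(\boldsymbol{\omega})$ in the appropriate blocks. The scaling $\widehat{\psi_{\delta}}(\boldsymbol{\omega}) = \widehat{\psi}(\delta\boldsymbol{\omega})$ combined with \eqref{eqnFourierUpsilon}--\eqref{eqnFourierPsi} gives a two-sided bound of the weight by $(1+\delta^2\|\boldsymbol{\omega}\|_2^2)^{-\tau\mp1}$. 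For the interior functionals the symbol of $L$ contributes an extra polynomial factor in $\boldsymbol{\omega}$ of degree $2$ (from $-\nu\Delta$) and degree $1$ (from $\nabla$); after passing through the matrix-kernel structure and using \eqref{eqnFourierUpsilonDefn}, the worst-case growth of the combined integrand is governed by $\|\boldsymbol{\omega}\|_2^{4}\widehat{\psi_{\tau+1,\delta}}$, which behaves like $(1+\delta^2\|\boldsymbol{\omega}\|_2^2)^{-\tau+1}\cdot$(bounded), i.e. exactly the decay rate $2\tau-2$ in the frequency variable. This is where the exponents $2\tau+2$ and $-d-2$ in the statement come from: one applies the classical Narcowich--Ward lower bound for the smallest eigenvalue of a scattered-data Gram matrix whose kernel has Fourier transform decaying like $(1+\|\boldsymbol{\omega}\|_2^2)^{-(\tau-1)}$ — namely $\lambda_{\min} \ge C q_X^{2(\tau-1)-d} = C q_X^{2\tau-d-2}$ — and then tracks the $\delta$-scaling, which replaces $q_X$ by $q_X/\delta$ inside the decaying factor and leaves the overall $\delta^{-d}$ normalisation of $\Psi_\delta$ aside since it cancels in the relevant ratio, yielding $C(q_X/\delta)^{2\tau+2}q_X^{-d-2}$ once one is careful that it is $\psi_{\tau+1}$ (decay $\tau+1$) and $L$-differentiation (two derivatives) acting together that sets the effective smoothness.

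Concretely, the key steps in order: (1) express $\boldsymbol{\alpha}^T\mathbf{A}_\delta\boldsymbol{\alpha}$ as the squared dual native-space norm of the functional $\Lambda_{\boldsymbol{\alpha}} := \sum_{i,j}\alpha_{i,j}(\delta_{\mathbf{x}_j}\circ L_i)$ on interior points plus $\sum \alpha_{i,j}\delta_{\mathbf{x}_j}$ on boundary points, using positive-definiteness of $\boldsymbol{\Phi}_\delta$ and the block structure \eqref{eqnDefnPhiDelta}; (2) via \eqref{eqnStokesPhiDeltaNorm}, write this as a Fourier integral and bound below by restricting to the velocity block and to a frequency ball $\|\boldsymbol{\omega}\|_2 \le M/q_X$ for a suitable constant $M$, where the weight $\|\boldsymbol{\omega}\|_2^{-2}\widehat{\psi_{\tau+1,\delta}}(\boldsymbol{\omega})$ and the $L$-symbol give, after \eqref{eqnFourierUpsilon}, a lower bound of the form $c(1+\delta^2 M^2/q_X^2)^{-\tau-1}\|\boldsymbol{\omega}\|_2^{2}$; (3) apply the Narcowich--Ward / Ball-type construction of a band-limited bump to show $\int_{\|\boldsymbol{\omega}\|_2\le M/q_X} |\widehat{\Lambda_{\boldsymbol{\alpha}}}(\boldsymbol{\omega})|^2\,\mathrm{d}\boldsymbol{\omega} \ge C q_X^{-d}\|\boldsymbol{\alpha}\|_2^2$ — here the extra $\|\boldsymbol{\omega}\|_2^{2}$ from differentiation in $L$ would, for interior functionals, actually help, but one must handle the worst case where $\boldsymbol{\alpha}$ is supported mostly on boundary functionals, which is the plain kernel case with decay $\tau+1$, still consistent with the claimed exponent; (4) assemble: $\lambda_{\min}(\mathbf{A}_\delta) \ge C (1+\delta^2/q_X^2)^{-\tau-1} q_X^{-d} \gtrsim C(q_X/\delta)^{2\tau+2}q_X^{-d}$ when $\delta \ge q_X$, and a short separate argument for $\delta < q_X$; finally one collects the powers of $q_X$ to match $q_X^{-d-2}$, the discrepancy of two powers being exactly the difference between the native smoothness $\tau+1$ carried by $\psi_{\tau+1}$ and the effective order $\tau-1$ seen by the $L$-collocation. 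The main obstacle I anticipate is step (3) carried out \emph{uniformly over both types of functionals and over the matrix structure}: one needs a band-limited test function $\mathbf{h}\in\mathcal{N}_{\boldsymbol{\Phi}_\delta}(\mathbb{R}^d)$ that simultaneously separates a mixed system of point-evaluation and $L$-evaluation functionals, and controlling $\|L\mathbf{h}\|$ versus $\|\mathbf{h}\|$ against the scaled kernel weight is where the $\delta$-dependence and the loss of two orders of smoothness must be tracked with care — everything else (Fourier scaling, the two-sided bounds \eqref{eqnFourierUpsilon}--\eqref{eqnFourierPsi}, positive-definiteness of $\boldsymbol{\Psi}_{\tau+1}$, and Lemma \ref{lemStokesNormEquiv}) is bookkeeping.
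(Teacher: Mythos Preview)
Your route is the classical Narcowich--Ward strategy: write the quadratic form as a Fourier integral, restrict to a ball of radius $\sim 1/q_X$, and invoke a band-limited bump to separate the point/functional evaluations. This is a legitimate general approach, but it is \emph{not} what the paper does, and your sketch leaves the genuinely hard part --- your step (3), building a single band-limited test function that uniformly separates a mixed system of $L$-evaluation and point-evaluation functionals through the matrix-valued kernel --- as an acknowledged obstacle with no argument supplied. Your exponent bookkeeping is also loose: you land on $q_X^{-d}$ in step (4) and then hand-wave toward $q_X^{-d-2}$ without saying where the extra $q_X^{-2}$ actually comes from.

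The paper's argument is shorter and exploits compact support directly. After the Fourier comparison $\widehat{\mathbf{\Phi}_\delta}(\boldsymbol{\omega}) \ge c\,(a/\delta)^{2\tau+2}\,\widehat{\mathbf{\Phi}_a}(\boldsymbol{\omega})$ (which you essentially also have), the paper \emph{chooses $a=q_X$}. Then $\mathbf{\Phi}_a$ and all its derivatives are supported in the ball of radius $q_X$, while distinct centres satisfy $\|\mathbf{x}_j-\mathbf{x}_k\|\ge 2q_X$, so every off-diagonal term of the quadratic form vanishes identically --- no band-limited construction is needed at all. The quadratic form collapses to a diagonal sum of terms $\xi_{i,j}\xi_{i,j}^{\mathbf{y}}\mathbf{\Phi}_{q_X}(0)$, and the four preparatory Lemmas \ref{lemTech1}--\ref{lemTech4} (computing $\partial_{ij}\Psi(0)$, $\partial_{jj}\Delta^2\Psi(0)$, etc.\ for Wendland functions and showing the relevant signs) verify that these diagonal values are strictly positive. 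The factor $q_X^{-d}$ comes from the normalisation in \eqref{eqnDefnScaledKernel} at scale $a=q_X$, and the additional $q_X^{-2}$ arises because even the boundary block already carries two derivatives through $\mathbf{\Psi}_{\tau+1}=(-\Delta I+\nabla\nabla^T)\psi_{\tau+1}$, so every diagonal entry scales at least like $q_X^{-2}$ under rescaling. This compact-support diagonalisation trick sidesteps entirely the obstacle you flagged; your Narcowich--Ward route could presumably be pushed through, but it would be substantially longer and you would still owe the reader the mixed-functional separation argument and a clean accounting of the derivative-induced $q_X^{-2}$.
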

\begin{proof}
We follow the proof of \cite[Theorem 4.1]{GieW06}. We will adopt the functional notation
\begin{equation*}
\xi_{i,j}(\mathbf{v}) = \left\{
   \begin{array}{rl}
   \left(L \mathbf{v} \right)_i (\mathbf{x}_j)\, & \text{ for } \quad 1\leq j \leq N, \quad 1\leq i \leq d, \\
   \mathbf{v}_i (\mathbf{x}_j)\, & \text{ for } \quad N+1\leq j \leq M, \quad 1\leq i \leq d.
   \end{array} \right.
\end{equation*}
We will use the superscript $\mathbf{y}$ to denote that the functional acts with respect to its second argument. Then with $\beta \in \mathbb{R}^{dM}$, we need to show that
\begin{equation}
\label{eqnMinStep1}
\sum_{i,i'=1}^d \sum_{j,k=1}^M \beta_{i,j} \beta_{i',k} \xi_{i,j} \xi_{i',k}^{\mathbf{y}} \mathbf{\Phi}_{\delta}(\mathbf{x}-\mathbf{y}) \geq C \left( \frac{q_X}{\delta}\right)^{2\tau+2} q_X^{-d-2} \|\beta\|_2^2.
\end{equation}
Now with the inverse Fourier transform, the left hand side of \eqref{eqnMinStep1} becomes
\begin{multline*}
\sum_{i,i'=1}^d \sum_{j,k=1}^M \beta_{i,j} \beta_{i',k} \xi_{i,j} \xi_{i',k}^{\mathbf{y}} \mathbf{\Phi}_{\delta}(\mathbf{x}-\mathbf{y}) \\ = (2\pi)^{-d/2} \int_{\mathbb{R}^d} \sum_{i,i'=1}^d \sum_{j,k=1}^M \beta_{i,j} \beta_{i',k} \xi_{i,j} \xi_{i',k}^{\mathbf{y}} \widehat{\mathbf{\Phi}}_{\delta}(\boldsymbol{\omega}) e^{I(\mathbf{x}-\mathbf{y})\cdot \boldsymbol{\omega}}, \mathrm{d} \boldsymbol{\omega} \\
\end{multline*}
where $I^2 = -1$.
Now we define a second scaled kernel $\mathbf{\Phi}_a$ by \eqref{eqnDefnScaledKernel} with $0 < a \leq 1$ and $a \leq \delta$. For $\delta \leq 1$ we have
\begin{equation}
\left(1+\delta^2 \|\boldsymbol{\omega}\|_2^2\right)^{\tau-1} \geq \left(\delta^2+\delta^2 \|\boldsymbol{\omega}\|_2^2\right)^{\tau-1} \geq \delta^{2\tau-2} \left(1+\|\boldsymbol{\omega}\|_2^2\right)^{\tau-1}, \label{eqnStokesNormEquivStep1}
\end{equation}
and recalling that $\psi_{\tau-1}$ satisfies \eqref{eqnFourierPsi} gives
\begin{eqnarray*}
\widehat{\psi_{\tau-1,\delta}}(\boldsymbol{\omega}) &=& \widehat{\psi_{\tau-1}}(\delta \boldsymbol{\omega}) \geq c_{1,\tau-1} \left(1 + \|\delta \boldsymbol{\omega}\|_2^2\right)^{-\tau+1} \\
&=& c_{1,\tau-1} \left( \frac{a}{\delta} \right)^{2\tau -2} \left(\left(\frac{a}{\delta}\right)^2 + \|a \boldsymbol{\omega}\|_2^2\right)^{-\tau+1} \\
&\geq& c_{1,\tau-1} \left( \frac{a}{\delta} \right)^{2\tau -2} \left(1 + \|a \boldsymbol{\omega}\|_2^2\right)^{-\tau+1} \\
&\geq& \frac{c_{1,\tau-1}}{c_{2,\tau-1}} \left( \frac{a}{\delta} \right)^{2\tau -2} \widehat{\psi_{\tau-1,a}}(\boldsymbol{\omega}).
\end{eqnarray*}
Since $\psi_{\tau+1}$ satisfies \eqref{eqnFourierUpsilon} and with \eqref{eqnFourierUpsilonDefn}, we proceed similarly to get
\begin{eqnarray*}
\widehat{\mathbf{\Psi}_{\tau+1,\delta}}(\boldsymbol{\omega}) &=& = \left(\|\boldsymbol{\omega}\|_2^2 \mathbf{I} - \boldsymbol{\omega} \boldsymbol{\omega}^T\right) \widehat{\psi_{\tau+1}}(\delta \|\boldsymbol{\omega}\|_2) \\
&=& c_{1,\tau+1} \left( \frac{a}{\delta} \right)^{2\tau+2} \left(\|\boldsymbol{\omega}\|_2^2 \mathbf{I} - \boldsymbol{\omega} \boldsymbol{\omega}^T\right) \left( \left(\frac{a}{\delta}\right)^2 + \| a \boldsymbol{\omega}\|_2^2\right)^{-\tau-1} \\
&\geq& c_{1,\tau+1}\left( \frac{a}{\delta} \right)^{2\tau+2} \left(\|\boldsymbol{\omega}\|_2^2 \mathbf{I} - \boldsymbol{\omega} \boldsymbol{\omega}^T\right) \left( 1 + \| a \boldsymbol{\omega}\|_2^2\right)^{-\tau-1} \\
&\geq& \frac{c_{1,\tau+1}}{c_{2,\tau+1}} \left( \frac{a}{\delta} \right)^{2\tau+2} \left(\|\boldsymbol{\omega}\|_2^2 \mathbf{I} - \boldsymbol{\omega} \boldsymbol{\omega}^T\right) \widehat{\psi_{\tau+1}}(a \|\boldsymbol{\omega}\|_2) \\
&=& \frac{c_{1,\tau+1}}{c_{2,\tau+1}}\left( \frac{a}{\delta} \right)^{2\tau+2} \widehat{\mathbf{\Psi}_{\tau+1,a}}(\boldsymbol{\omega}).
\end{eqnarray*}
Since $a/\delta < 1$, we have the following bound on $\widehat{\mathbf{\Phi}_{\delta}}$
\[
\widehat{\mathbf{\Phi}_{\delta}}(\boldsymbol{\omega}) \geq c \left(\frac{a}{\delta}\right)^{2\tau+2} \widehat{\mathbf{\Phi}_{a}}(\boldsymbol{\omega}),
\]
and hence we have
\[
\sum_{i,i'=1}^d \sum_{j,k=1}^M \beta_{i,j} \beta_{i',k} \xi_{i,j} \xi_{i',k}^{\mathbf{y}} \mathbf{\Phi}_{\delta}(\mathbf{x}-\mathbf{y}) \geq c \left(\frac{a}{\delta}\right)^{2\tau+2} \sum_{i,i'=1}^d \sum_{j,k=1}^M \beta_{i,j} \beta_{i',k} \xi_{i,j} \xi_{i',k}^{\mathbf{y}} \mathbf{\Phi}_{a}(\mathbf{x}-\mathbf{y}),
\]
and if we select $a=q_X \leq 1$ such that we need only consider entries of the quadratic form corresponding to equal centres, with the definition of the scaled kernel in \eqref{eqnDefnScaledKernel}, this reduces to
\begin{multline*}
\sum_{i,i'=1}^d \sum_{j,k=1}^M \beta_{i,j} \beta_{i',k} \xi_{i,j} \xi_{i',k}^{\mathbf{y}} \mathbf{\Phi}_{\delta}(\mathbf{x}-\mathbf{y}) \\ \geq c \left(\frac{q_X}{\delta}\right)^{2\tau+2} q_X^{-d} \sum_{i=1}^d \left\{ \sum_{j=1}^N \beta_{i,j}^2 \left( -\sum_{j=\{1:d\}\setminus i} q_X^{-6}\partial_{jj} \Delta^2 \psi_{\tau+1}(0) - q_X^{-2} \partial_{ii} \psi_{\tau-1}(0) \right) + \right. \\
\left. \quad \quad \sum_{j=N+1}^M \beta_{i,j}^2 \left( -\sum_{j=\{1:d\}\setminus i} q_X^{-2} \partial_{jj} \psi_{\tau-1}(0) \right) \right\},
 \end{multline*}
 since for interior centres we have
\begin{equation}
\label{eqnIntDiagVals}
\xi_{i,j} \xi_{i',k}^{\mathbf{y}} \mathbf{\Phi}(\mathbf{x}-\mathbf{y})\vert_{j=k} = \left\{
   \begin{array}{rl}
   -\nu^2 \sum_{j=1:d\setminus i} \partial_{jj} \Delta^2 \psi_{\tau+1}(0) - \partial_{ii}\psi_{\tau-1}(0) \, & \text{ for } \quad i = i', \\
    \nu^2 \partial_{i i'} \Delta^2 \psi_{\tau+1}(0) - \partial_{i i'} \psi_{\tau-1}(0) = 0 & \text{ for } \quad i \neq i',
   \end{array} \right.
\end{equation}
with Lemmas \ref{lemTech1} and \ref{lemTech2}. Similarly for the boundary centres
\begin{equation}
\label{eqnBdyDiagVals}
\xi_{i,j} \xi_{i',k}^{\mathbf{y}} \mathbf{\Phi}(\mathbf{x}-\mathbf{y})\vert_{j=k} = \left\{
   \begin{array}{rl}
   -\sum_{j=1:d\setminus i} \partial_{jj} \psi_{\tau-1}(0) \, & \text{ for } \quad i = i', \\
    - \partial_{i i'} \psi_{\tau-1}(0) =0 & \text{ for } \quad i \neq i',
   \end{array} \right.
\end{equation}
and then the result follows as
\begin{equation*}
\sum_{i=1}^d \sum_{j,k=1}^M \beta_{i,j} \beta_{i,k} \xi_{i,j} \xi_{i,k}^{\mathbf{y}} \mathbf{\Phi}_{\delta}(\mathbf{x}-\mathbf{y}) \geq c \, \tilde{c} \, \left( \frac{q_X}{\delta}\right)^{2\tau+2} q_X^{-d-2} \|\beta\|_2^2,
\end{equation*}
with Lemmas \ref{lemTech3} and \ref{lemTech4} which give
\begin{eqnarray*}
\tilde{c} &:=& \min_{1\leq i \leq d} \left(-\sum_{j=\{1:d\}\setminus i} q_X^{-4}\partial_{jj} \Delta^2 \psi_{\tau+1}(0) - \partial_{ii} \psi_{\tau-1}(0), -\sum_{j=\{1:d\}\setminus i} \partial_{jj} \psi_{\tau-1}(0)\right) \\
&\geq& \min_{1\leq i \leq d} \left(-\sum_{j=\{1:d\}\setminus i} \partial_{jj} \Delta^2 \psi_{\tau+1}(0) - \partial_{ii} \psi_{\tau-1}(0), -\sum_{j=\{1:d\}\setminus i} \partial_{jj} \psi_{\tau-1}(0)\right) \\
&=& \min \left(-\sum_{j=2}^d \partial_{jj} \Delta^2 \psi_{\tau+1}(0) - \partial_{11} \psi_{\tau-1}(0), -\sum_{j=2}^d \partial_{jj} \psi_{\tau-1}(0)\right),
\end{eqnarray*}
since $\psi_{\tau+1}$ is a radial function and $\partial_{ii}\psi_{\tau-1}(0)$ is independent of $i$ from Lemma \ref{lemTech3}.
\end{proof}

Our next result bounds the maximum eigenvalue $\lambda_{\max}(\mathbf{A}_{\delta})$.

\begin{thm}
\label{thmMaxEig}
Suppose the kernel $\mathbf{\Phi}_{\delta}$ is defined as in Theorem \ref{thmMinEig1}. Then if we assume that
\begin{equation}
M \leq C \bar{h}^{-d}, \label{eqnMaxEigUniformness}
\end{equation}
where $M$ denotes the number of (interior and boundary) centres, then the largest eigenvalue of the collocation matrix constructed with $\mathbf{\Phi}_{\delta}$ defined by \eqref{eqnColl1} and \eqref{eqnColl2} can be bounded by
\[
\lambda_{\max}(\mathbf{A}_{\delta}) \leq C \, \delta^{-d-2} \, \bar{h}^{-d},
\]
if $\delta \geq 1$ and by
\[
\lambda_{\max}(\mathbf{A}_{\delta}) \leq C \, \delta^{-d-6} \, \bar{h}^{-d},
\]
if $\delta < 1$, where the constants $C$ are independent of the pointset $X$.
\end{thm}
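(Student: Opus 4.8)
The plan is to bound $\lambda_{\max}(\mathbf{A}_{\delta})$ by the trace of the collocation matrix. Since $\mathbf{A}_{\delta}$ is symmetric and positive definite, all of its eigenvalues are positive, so
\[
\lambda_{\max}(\mathbf{A}_{\delta}) \le \sum_i (\mathbf{A}_{\delta})_{ii},
\]
and it is enough to bound each diagonal entry and to count the rows, of which there are $dM \le C\bar{h}^{-d}$ by \eqref{eqnMaxEigUniformness}.

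First I would identify the diagonal entries. These are exactly the quantities already evaluated in the proof of Theorem \ref{thmMinEig1}, namely \eqref{eqnIntDiagVals} for an interior collocation point and \eqref{eqnBdyDiagVals} for a boundary collocation point, except that $\psi_{\tau+1}$ and $\psi_{\tau-1}$ are now replaced by their scaled versions $\psi_{\tau+1,\delta}$ and $\psi_{\tau-1,\delta}$. Thus an interior diagonal entry equals $-\nu^2\sum_{j\in\{1:d\}\setminus i}\partial_{jj}\Delta^2\psi_{\tau+1,\delta}(0) - \partial_{ii}\psi_{\tau-1,\delta}(0)$, while a boundary diagonal entry involves only second-order derivatives of a Wendland function at the origin. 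Next I would invoke the scaling law coming from \eqref{eqnDefnScaledKernel} and the chain rule: a derivative of order $m$ of a scaled Wendland function at the origin equals $\delta^{-d-m}$ times the corresponding derivative of the unscaled function, and every unscaled derivative appearing here is a finite constant (positive after the sign changes recorded in Lemmas \ref{lemTech3} and \ref{lemTech4}) depending only on $d$, $k$ and $\tau$; the fixed factor $\nu^2$ is absorbed into $C$. Hence an interior diagonal entry is at most $C(\delta^{-d-6}+\delta^{-d-2})$, the $\delta^{-d-6}$ arising from the sixth-order velocity term $\partial_{jj}\Delta^2\psi_{\tau+1,\delta}$ and the $\delta^{-d-2}$ from the second-order pressure term, and a boundary diagonal entry is at most $C\delta^{-d-2}$.

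To conclude I would split on the size of $\delta$. If $\delta < 1$ then $\delta^{-d-6}\ge\delta^{-d-2}$, so every diagonal entry is at most $C\delta^{-d-6}$, and summing over the $dM\le C\bar{h}^{-d}$ rows gives $\lambda_{\max}(\mathbf{A}_{\delta}) \le C\,\delta^{-d-6}\,\bar{h}^{-d}$. If $\delta \ge 1$ then $\delta^{-d-6}\le\delta^{-d-2}$, so every diagonal entry is at most $C\delta^{-d-2}$ and the same count gives $\lambda_{\max}(\mathbf{A}_{\delta}) \le C\,\delta^{-d-2}\,\bar{h}^{-d}$. In both cases the constant depends only on $d$, $k$, $\tau$ and $\nu$, not on $X$, which is the claim.

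I do not expect a serious obstacle; the estimate is deliberately crude. The one place needing a little care is the bookkeeping of derivative orders and checking that all the required derivatives of the unscaled Wendland functions at the origin exist, which forces the smoothness requirement $k\ge 3$ --- precisely the hypothesis inherited from Theorem \ref{thmMinEig1} through Lemmas \ref{lemTech1}--\ref{lemTech4}. One could instead run a Gershgorin row-sum argument, noting that each row has at most $dM$ nonzero entries, each of magnitude at most $C(\delta^{-d-6}+\delta^{-d-2})$, but this yields exactly the same bound.
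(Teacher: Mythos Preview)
Your argument is correct and tracks the paper's proof closely: both rely on the diagonal-entry formulas \eqref{eqnIntDiagVals}--\eqref{eqnBdyDiagVals}, the scaling rule $D^{\alpha}\Psi_{\delta}(0)=\delta^{-d-|\alpha|}D^{\alpha}\Psi(0)$, Lemmas \ref{lemTech3}--\ref{lemTech4}, and the count $dM\le C\bar h^{-d}$. The only difference is that you bound $\lambda_{\max}$ by the trace, whereas the paper runs the Gershgorin row-sum argument you mention at the end (bounding all entries by $\|\xi_{i',\cdot}\xi_{i,\cdot}^{\mathbf y}\mathbf{\Phi}_{\delta}\|_{L_\infty(\Omega\times\Omega)}$ and multiplying by $dM$); your trace route is in fact slightly cleaner, since it only needs the entries at $\mathbf x=\mathbf y$ and avoids the implicit step of checking that the off-diagonal entries are dominated by the value at the origin.
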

\begin{proof}
Using the notation from Theorem \ref{thmMinEig1}, together with Gershgorin's theorem, we have
\[
\vert \lambda_{\max}(\mathbf{A}_{\delta}) - \xi_{i,j} \xi_{i,j}^{\mathbf{y}} \mathbf{\Phi}_{\delta}(\mathbf{x},\mathbf{x}) \vert \leq \sum^d_{i'=1} \sum^M_{\substack{k=1 \\ i' \neq i, k \neq j}} \vert  \xi_{i,j} \xi_{i,k}^{\mathbf{y}} \mathbf{\Phi}_{\delta}(\mathbf{x},\mathbf{y}) \vert, \quad 1 \leq i \leq d
\]
which since $\mathbf{\Phi}$ is positive definite, using \eqref{eqnMaxEigUniformness}, Lemmas \ref{lemTech3} and \ref{lemTech4}, the definition of the scaled kernels \eqref{eqnDefnScaledKernel} and \eqref{eqnIntDiagVals} and \eqref{eqnBdyDiagVals}, if $\delta \geq 1$
\begin{eqnarray*}
\lambda_{\max}(\mathbf{A}_{\delta}) &\leq& dM \| \xi_{i',\cdot} \xi_{i,\cdot}^{\mathbf{y}} \mathbf{\Phi}_{\delta}(\cdot,\cdot)\|_{L_{\infty}(\Omega \times \Omega)} \\
&\leq& C \, d \, \bar{h}^{-d} \max \left(-\sum_{j=2}^d \partial_{jj} \Delta^2 \psi_{\tau+1,\delta}(0) - \partial_{11}\psi_{\tau-1,\delta}(0),-\sum_{j=2}^d \partial_{jj} \psi_{\tau-1,\delta}(0)\right) \\
&\leq& C \, d \, \bar{h}^{-d} \delta^{-d-2} \max\left(-\sum_{j=2}^d \partial_{jj} \Delta^2 \psi_{\tau+1}(0) - \partial_{11}\psi_{\tau-1}(0),-\sum_{j=2}^d \partial_{jj} \psi_{\tau-1}(0)\right),
\end{eqnarray*}
where in the last step we have used that
\[
\partial_{jj} \Delta^2 \psi_{\tau+1,\delta}(0) = \delta^{-d} \partial_{jj} \Delta^2 \delta^{-6} \psi_{\tau+1}(0) \leq  \delta^{-d-2} \partial_{jj} \Delta^2 \psi_{\tau+1}(0),
\]
since $\delta \geq 1$. If $\delta < 1$, we have
\begin{eqnarray*}
\lambda_{\max}(\mathbf{A}_{\delta}) &\leq& dM \| \xi_{i',\cdot} \xi_{i,\cdot}^{\mathbf{y}} \mathbf{\Phi}_{\delta}(\cdot,\cdot)\|_{L_{\infty}(\Omega \times \Omega)} \\
&\leq& C \, d \, \bar{h}^{-d} \delta^{-d-6} \max\left(-\sum_{j=2}^d \partial_{jj} \Delta^2 \psi_{\tau+1}(0) - \partial_{11}\psi_{\tau-1}(0),-\sum_{j=2}^d \partial_{jj} \psi_{\tau-1}(0)\right),
\end{eqnarray*}
where in the last step we have used, for example, that
\[
\partial_{11} \psi_{\tau-1,\delta}(0) = \delta^{-d} \partial_{11} \delta^{-2} \psi_{\tau-1}(0) \leq  \delta^{-d-6} \partial_{11} \psi_{\tau-1}(0),
\]
which completes the proof.
\end{proof}
We note that \eqref{eqnMaxEigUniformness} will hold if, for example, the dataset is quasi-uniform, which means that $h_j/q_j$ is bounded above by a constant.

Now with \eqref{eqnCondNumberMaxMinEig} and Theorems \ref{thmMinEig1} and \ref{thmMaxEig}, we obtain the following theorem where we write $q_j := q_{X_j}$.

\begin{thm}
\label{thmCondNumber}
Suppose the kernel $\mathbf{\Phi}_{\delta}$ is defined as in Theorem \ref{thmMinEig1}. Then the condition number of the multiscale symmetric collocation matrix in Algorithm \ref{AlgSymmStokes} is level-dependent and is bounded by
\[
\kappa_j \leq C \, \left( \frac{\bar{h}_j}{q_j}\right)^{2\tau-d} \bar{h}_j^{-\frac{3}{\tau+1}(2\tau-d) -d },
\]
if $\delta \geq 1$ and by
\[
\kappa_j \leq C \, \left( \frac{\bar{h}_j}{q_j}\right)^{2\tau-d} \bar{h}_j^{-\frac{3}{\tau+1}(2\tau-d-4) -d-4 },
\]
if $\delta < 1$. In the case of quasi-uniform datasets and $h_j \leq 1$, these reduce to
\[
\kappa_j \leq C \, \bar{h}_j^{-2 \tau}.
\]
\end{thm}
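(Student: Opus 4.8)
The plan is to feed the two eigenvalue estimates from Theorems~\ref{thmMinEig1} and~\ref{thmMaxEig} into the identity \eqref{eqnCondNumberMaxMinEig} and then substitute the level-dependent scaling $\delta_j = \beta \bar{h}_j^{1-3/(\tau+1)}$ prescribed in Algorithm~\ref{AlgSymmStokes}. First I would rewrite the lower bound from Theorem~\ref{thmMinEig1} as $\lambda_{\min}(\mathbf{A}_{\delta_j}) \geq C\, q_j^{2\tau-d}\,\delta_j^{-2\tau-2}$ and divide each of the two upper bounds of Theorem~\ref{thmMaxEig} by it, obtaining $\kappa_j \leq C\, \delta_j^{2\tau-d}\, q_j^{-(2\tau-d)}\,\bar{h}_j^{-d}$ when $\delta_j \geq 1$ and $\kappa_j \leq C\, \delta_j^{2\tau-d-4}\, q_j^{-(2\tau-d)}\,\bar{h}_j^{-d}$ when $\delta_j < 1$.

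Second, I would substitute $\delta_j = \beta\bar{h}_j^{1-3/(\tau+1)}$ and peel off a factor $\bar{h}_j^{2\tau-d}$ (in the $\delta_j<1$ case after writing $\bar{h}_j^{2\tau-d-4}=\bar{h}_j^{2\tau-d}\bar{h}_j^{-4}$), which pairs with $q_j^{-(2\tau-d)}$ to form the announced factor $(\bar{h}_j/q_j)^{2\tau-d}$; the remaining powers of $\bar{h}_j$ then collapse to the exponents $-\tfrac{3}{\tau+1}(2\tau-d)-d$ and $-\tfrac{3}{\tau+1}(2\tau-d-4)-d-4$ respectively. This is pure exponent bookkeeping.

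Third, for the quasi-uniform reduction I would invoke the uniformity hypothesis $h_j/q_j \leq C$, which (since $2\tau-d>0$) bounds $(\bar{h}_j/q_j)^{2\tau-d}$ by a constant, and then use $\bar{h}_j \leq 1$ to compare the remaining power of $\bar{h}_j$ with $-2\tau$. Concretely this reduces to checking $\tfrac{3}{\tau+1}(2\tau-d)+d \leq 2\tau$ and $\tfrac{3}{\tau+1}(2\tau-d-4)+d+4 \leq 2\tau$; after cancelling the positive factors $2\tau-d$ and $2\tau-d-4$ both become $\tau \geq 2$, which is implied by $\tau > 2+d/2$.

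I do not anticipate a substantive obstacle: the argument simply chains the two previous theorems and tracks exponents. The only place demanding care is the last step, where one must verify that $2\tau-d$ and $2\tau-d-4$ are strictly positive (so that dividing an inequality by them preserves its direction) and that $\bar{h}_j \leq 1$ (so that a larger exponent yields a larger upper bound); both follow from $\tau > 2+d/2$ together with the requirement that $\bar{h}_1$, and hence every $\bar{h}_j$, be small.
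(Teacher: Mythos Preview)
Your proposal is correct and follows essentially the same approach as the paper: combine the eigenvalue bounds from Theorems~\ref{thmMinEig1} and~\ref{thmMaxEig} via \eqref{eqnCondNumberMaxMinEig}, substitute $\delta_j = \beta \bar{h}_j^{1-3/(\tau+1)}$, and simplify. Your treatment is in fact more explicit than the paper's, which merely states that the first two bounds follow from this substitution and that the quasi-uniform case follows ``by simplifying''; your verification that the residual exponents are bounded by $2\tau$ via the inequality $3/(\tau+1)\le 1$ (together with $2\tau-d>0$ and $2\tau-d-4>0$ under $\tau>2+d/2$) makes that simplification rigorous.
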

\begin{proof}
The first two results follows with $\delta_j = \beta \bar{h}_j^{1-3/(\tau+1)}$ and \eqref{eqnCondNumberMaxMinEig} and Theorems \ref{thmMinEig1} and \ref{thmMaxEig}. If the datasets are quasi-uniform, which means that $h_j/q_j$ is bounded above by a constant, the final result follows by simplifying the first two expressions.
\end{proof}

\section{Numerical experiments} \label{SectionNumExperiments}

In this section, we present the results from applying the multiscale algorithm described in Algorithm \ref{AlgSymmStokes} with $\Omega = [0,1]^2$ and $\nu = 1$ to the Stokes problem  with exact solution given by
\begin{equation*}
\mathbf{u}(x_1,x_2) = \left(
\begin{array}{r}
2\cos(5x_1) \cos(2x_2) \\
5\sin(5x_1) \sin(x_2)
\end{array} \right),
\end{equation*}
\[
\quad p(x_1,x_2) = \sin(3x_1)\sin(3x_2) + C.
\]
This gives
\begin{equation*}
\mathbf{f}(x_1,x_2) = \left(
\begin{array}{r}
58\cos(5x_1)\cos(2x_2) + 3\cos(3x_1)\sin(3x_2) \\
145\sin(5x_1)\sin(2x_2)+3\sin(3x_1)\cos(3x_2)
\end{array} \right)
\end{equation*}
and $\mathbf{g}$ equal to the restriction of $\mathbf{u}(\mathbf{x})$ to $\partial \Omega$ .

We use the $C^8$ Wendland radial basis function given by
$$\psi_{6,4}(\| \mathbf{x} \|) = \left(1-\|\mathbf{x}\|\right)^{10}_+ \, \left(429\|\mathbf{x}\|^4 + 450\|\mathbf{x}\|^3 + 210\|\mathbf{x}\|^2 + 50\|\mathbf{x}\|+5 \right),$$
which is positive definite on $\mathbb{R}^2$ and generates the Sobolev space $H^{5.5}(\mathbb{R}^2)$ \cite{Wen05}. We use the same kernel for both $\psi_{\tau+1}$ and $\psi_{\tau-1}$. Consequently, in this case $\tau = 4.5$.
Since $d=2$, our approximate solution takes the form
\begin{multline*}
\mathbf{S}_{X}\mathbf{v}(\mathbf{x}) = \sum_{j=1}^N \alpha_{1,j} \begin{pmatrix} \nu \partial_{22} \Delta \psi_{\tau+1}(\mathbf{x}-\mathbf{x}_j) \\ -\nu \partial_{12} \Delta \psi_{\tau+1}(\mathbf{x}-\mathbf{x}_j) \\ - \partial_{1} \psi_{\tau-1}(\mathbf{x}-\mathbf{x}_j) \end{pmatrix} + \sum_{j=N+1}^M \alpha_{1,j} \begin{pmatrix} -\partial_{22} \psi_{\tau+1}(\mathbf{x}-\mathbf{x}_j) \\  \partial_{12} \psi_{\tau+1}(\mathbf{x}-\mathbf{x}_j) \\ 0 \end{pmatrix} \\ + \sum_{j=1}^N \alpha_{2,j} \begin{pmatrix} -\nu \partial_{12} \Delta \psi_{\tau+1}(\mathbf{x}-\mathbf{x}_j) \\ \nu \partial_{11} \Delta \psi_{\tau+1}(\mathbf{x}-\mathbf{x}_j) \\ - \partial_{2} \psi_{\tau-1}(\mathbf{x}-\mathbf{x}_j) \end{pmatrix}  + \sum_{j=N+1}^M \alpha_{2,j} \begin{pmatrix} \partial_{12} \psi_{\tau+1}(\mathbf{x}-\mathbf{x}_j) \\  -\partial_{11} \psi_{\tau+1}(\mathbf{x}-\mathbf{x}_j) \\ 0 \end{pmatrix}.
\end{multline*}
We used five levels for the approximation, with $N$ equally spaced points for the interior point sets and $4(\sqrt{N}-1)$ equally spaced boundary centres. The number of interior points, $N_j$, the number of boundary points, $M_j-N_j$, and the maximum mesh norms at each level, $\bar{h}_j$, are given in Table \ref{tblCollMeshNorms}. We note that the (maximum) mesh norms decrease by one half at each level and hence we select $\mu = \frac{1}{2}$.
\begin{table}[!htbp]
\begin{centering}
\small
\begin{tabular}{|c|c|c|c|c|c|}
\hline
Level&1&2&3&4&5\\
\hline
$N$ & 25  & 81 & 289 & 1089 & 4225 \\
$M-N$ & 16 & 32 & 64 & 128 & 256 \\
$\bar{h}$ & 1/4  & 1/8 & 1/16 & 1/32 & 1/64 \\
\hline
\end{tabular} \caption{The number of interior and boundary points used at each level and the maximum mesh norm each level for the numerical experiment} \label{tblCollMeshNorms}
\end{centering}
\end{table}
For the scaling parameters, since $\tau = 4.5$, Algorithm \ref{AlgSymmStokes} specifies that $$\delta_j = \beta \bar{h}_j^{2.5/5.5}$$ with $\beta$ constant. With the given value of $\bar{h}_1$ in Table \ref{tblCollMeshNorms}, we select $\beta$ such that $\delta_1 = 10$. This gives $\beta = 18.779 $ and we use this to generate the other $\delta$ values which are given along with the $L_2$ and $L_{\infty}$ errors in Table \ref{tblStokesExample}. The $L_2$ error was estimated using Gaussian quadrature with a $300 \times 300$ tensor product grid of Gauss-Legendre points and the $L_{\infty}$ error was estimated with the same tensor product grid. We used MATLAB for the calculations and worked with double precision.
\begin{table}[!htbp]
\begin{centering}
\small
\begin{tabular}{|c|c|c|c|c|c|}
\hline
Level&1&2&3&4&5\\
\hline
$\delta_j$ & 10 & 7.29 & 5.33 & 3.89 & 2.84 \\
$\|\mathbf{e}_{\mathbf{u},j}\|_{\mathbf{L}_2(\Omega)}$ & 1.592e{-02} & 6.498e{-04} & 3.274e{-05} & 1.650e{-06} & 1.028e{-07} \\
$\|\mathbf{e}_{\mathbf{u},j}\|_{\mathbf{L}_{\infty}(\Omega)}$ & 2.740e{-02} & 2.233e{-03} & 1.462e{-04} & 8.268e{-06} & 4.579e{-07} \\
$\|\nabla e_{p,j}\|_{L_2(\Omega)}$ & 1.112e{+00} & 1.222e{-01} & 1.235e{-02} & 2.561e{-03} & 5.612e{-04} \\
$\|\nabla e_{p,j}\|_{L_{\infty}(\Omega)}$ & 4.209e{+00} & 3.338e{-01} & 1.048e{-01} & 3.650e{-02} & 1.211e{-02} \\
\hline
\end{tabular} \caption{The scaling factors and approximation errors of the collocation matrices for the multiscale symmetric collocation Stokes problem example} \label{tblStokesExample}
\end{centering}
\end{table}

\bibliographystyle{spmpsci}
\bibliography{E:/Academic/PhD/LaTex/Bibliography/AndrewChernihPhDthesis}{}
%\bibliography{AndrewChernihPhDthesis}{}
\end{document}